\theoremstyle{plain} \numberwithin{equation}{section}
\newtheorem{main}{Theorem}
\newtheorem{thm}{Theorem}[section]
\newtheorem{cor}[thm]{Corollary}
\newtheorem{prop}[thm]{Proposition}
\newtheorem{lemma}[thm]{Lemma}
\newtheorem{coro}[main]{Corollary}
\theoremstyle{definition}
\newtheorem{remark}{Remark}[section]
\newtheorem{defn}[remark]{Definition}
\newtheorem{ex}[remark]{Example}
\newtheorem{rmk}[thm]{Remark}
\newcommand{\bi}{\begin{itemize}}
\newcommand{\ei}{\end{itemize}}
\newcommand{\bp}{\begin{proof}}
\newcommand{\ep}{\end{proof}}
\newcommand{\ddbar}{\partial \bar \partial}
\def\dim{\mbox{dim}}
\def\ra{\rightarrow}
\def\cal{\mathcal}
\def\CC{\mathbb{C}}
\def\PP{\mathbb{P}}
\def\QQ{\mathbb{Q}}
\def\ZZ{\mathbb{Z}}
\def\RR{\mathbb{R}}
\def\HH{\mathbb{H}}
\def\OO{\cal O}
\def\s-{\setminus}
\begin{document}

\title[Remarks on astheno-K\"ahler manifolds]{Remarks on astheno-K\"ahler manifolds, 
Bott-Chern and Aeppli cohomology groups}

\author[Chiose]{Ionu\c{t} Chiose}

\address{
	Institute of Mathematics of the Romanian Academy,  P.O. Box 1-764, Bucharest 014700,  Romania}
	
	\email{Ionut.Chiose@imar.ro}

\author[R\u asdeaconu]{Rare\c s R\u asdeaconu}

\address{        
        Department of Mathematics, 1326 Stevenson Center, Vanderbilt University, Nashville, TN, 37240, USA}
        
        \email{rares.rasdeaconu@vanderbilt.edu}

\date{\today}

\keywords{Complex manifolds, astheno-K\"ahler metrics, Bott-Chern cohomology, Aeppli cohomology}

\subjclass[2000]{Primary: 53C55, 32Q10; Secondary: 32J18, 14E30, 14M99.}

\dedicatory{Dedicated to the memory of Professor Jean-Pierre Demailly.}

\begin{abstract}
We provide a new cohomological obstruction to the existence of astheno-K\"ahler metrics, 
and study relevant examples.
\end{abstract}

\maketitle

\thispagestyle{empty}

%\tableofcontents

\section{Introduction}

\medskip

Let $(M,J,g)$ be a Hermitian manifold of real dimension $2n$ and  
$\displaystyle
\omega(\cdot,\cdot) = g(J\cdot,\cdot)
$ 
its fundamental $2$-form.  If $\omega$ is $d$-closed, then the metric $g$ is called K\"ahler, 
and a complex manifold carrying such a metric is called a K\"ahler manifold. K\"ahler manifolds 
exist in abundance and satisfy well-documented remarkable cohomological properties. For 
example, they are formal and the $\ddbar$-lemma \cite{dgms} holds, the Hodge symmetry 
is satisfied, and the Hodge-Fr\"olicher spectral sequence degenerates at the first page. 
Furthermore, a result of Harvey-Lawson \cite{hl} states that a compact complex manifold 
carries a K\"ahler metric if and only if it carries no positive $(1,1)$-components of boundaries. 
Nevertheless, the failure of such cohomological properties obstructs the existence of K\"ahler 
metrics, and many examples of non-K\"ahler manifolds are present in the literature. It is only 
natural to impose weaker conditions on the fundamental $2$-form $\omega.$ Such conditions 
have been oftentimes considered and studied, and many of them involve the closure with 
respect to the $\ddbar$-operator of the $(k, k)$-form $\omega^k.$ However, the presence 
of special classes of Hermitian metrics hardly ever imposes good cohomological behavior.

\smallskip

A Hermitian metric satisfying 
$$
\ddbar \omega^{n-2}=0
$$
is called an {\it{astheno-K\"ahler metric}}. Such metrics were introduced by Jost and 
Yau in their study of the Hermitian harmonic maps, and  used to prove an extension 
of Siu's Rigidity Theorem to non-K\"ahler manifolds \cite[Theorem 6]{jy}. Later, Li, 
Yau and Zheng found other interesting applications, such as a generalization to higher 
dimension of Bogomolov's Theorem on class $VII_0$ surfaces \cite[Corollary 3]{lyz}, 
while Carlson and Toledo used them in \cite{ct} to obtain results on the fundamental 
groups of class $VII$ surfaces. Several construction methods of astheno-K\"ahler 
metrics are currently known (see Section \ref{eao}). However, only few obstructions to 
their existence are known. The obstructions are derived from an observation of Jost and 
Yau \cite{jy}, who noticed  that on a manifold carrying an astheno-K\"ahler metric every 
holomorphic $1$-form is closed. This remark was generalized by Fino, Grantcharov and 
Vezzoni \cite{fgv} who found a Harvey-Lawson type criterion for astheno-K\"ahler metrics: 
if a compact complex manifold admits a weakly positive, $\ddbar$-exact, non-vanishing 
$(2,2)$-current, then it cannot carry an astheno-K\"ahler metric. 

\smallskip

The aim of this article is to exhibit and study a new relation between the Bott-Chern 
and Aeppli cohomologies of a compact complex manifold which appears at the level 
of $(0,1)$-forms in the presence of an astheno-K\"ahler metric. In Section \ref{g-arg} 
the following result is proved.
 \begin{main}
 \label{c-ineq}
On a compact astheno-K\"ahler manifold $M$, the 
following inequalities hold:
\begin{equation}
\label{ineq}
h^{0,1}_{BC}(M)\leq h_A^{0,1}(M)\leq h^{0,1}_{BC}(M)+1.
\end{equation}
\end{main}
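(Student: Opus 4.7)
The two inequalities of~\eqref{ineq} are established separately.

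\emph{First inequality: injectivity of the comparison map.} Consider the natural map
\[
\phi\colon H^{0,1}_{BC}(M) \longrightarrow H^{0,1}_A(M), \qquad [\alpha]_{BC} \longmapsto [\alpha]_A,
\]
well-defined because $H^{0,1}_{BC}(M)$ has trivial denominator in this bidegree (since $\ddbar\Omega^{-1,0}(M) = 0$), so its elements are exactly the $d$-closed $(0,1)$-forms, while $H^{0,1}_A(M) = \ker\ddbar|_{\Omega^{0,1}(M)}/\bar\partial C^{\infty}(M)$. If $[\alpha]_{BC} \in \ker\phi$, then $\alpha = \bar\partial f$ for some $f \in C^{\infty}(M,\CC)$ and $d\alpha = 0$ reads $\ddbar f = 0$. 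The real and imaginary parts of $f$ are then pluriharmonic, and the maximum principle for pluriharmonic functions on the compact connected $M$ forces them to be constant, so $\alpha = 0$. Hence $h^{0,1}_{BC}(M) \leq h^{0,1}_A(M)$. The astheno-K\"ahler hypothesis is not used in this step.

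\emph{Second inequality: key $L^2$ identity.} A class $[\alpha]_A$ belongs to $\Image\phi$ if and only if some $g \in C^{\infty}(M)$ makes $\alpha + \bar\partial g$ $d$-closed; equivalently, $\bar\partial\alpha = 0$ \emph{and} $[\partial\alpha]_{BC} = 0$ in $H^{1,1}_{BC}(M)$. To collapse these two obstructions into a single scalar invariant, I invoke the astheno-K\"ahler condition. Two applications of Stokes' theorem, together with $\ddbar\omega^{n-2} = 0$, give
\[
\int_M \ddbar(\alpha\wedge\bar\alpha)\wedge\omega^{n-2} \;=\; \int_M \alpha\wedge\bar\alpha\wedge\ddbar\omega^{n-2} \;=\; 0.
\]
Since $\ddbar\alpha = \ddbar\bar\alpha = 0$, direct expansion yields
\[
\ddbar(\alpha\wedge\bar\alpha) \;=\; \bar\partial\alpha\wedge\overline{\bar\partial\alpha} \;-\; \partial\alpha\wedge\overline{\partial\alpha},
\]
and hence
\[
\int_M \partial\alpha\wedge\overline{\partial\alpha}\wedge\omega^{n-2} \;=\; \int_M \bar\partial\alpha\wedge\overline{\bar\partial\alpha}\wedge\omega^{n-2}.
\]
Decomposing $\partial\alpha = (\partial\alpha)_0 + f_\alpha\,\omega$ with $(\partial\alpha)_0$ primitive and $f_\alpha = \tfrac{1}{n}\Lambda(\partial\alpha)$, and using the Hodge--Riemann bilinear relations (negative sign on primitive $(1,1)$-forms, positive on primitive $(0,2)$-forms), this identity rearranges to
\[
\|\bar\partial\alpha\|_{\omega}^{2} + \|(\partial\alpha)_0\|_{\omega}^{2} \;=\; n(n-1)\,\|f_\alpha\|_{\omega}^{2}.
\]
In particular, $f_\alpha \equiv 0$ forces $\bar\partial\alpha = 0$ and $\partial\alpha = 0$, so $\alpha$ is $d$-closed and $[\alpha]_A \in \Image\phi$.

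\emph{Cokernel bound.} Under $\alpha \mapsto \alpha + \bar\partial g$, the function $f_\alpha$ shifts by $\tfrac{1}{n}\Lambda\ddbar g$. Hence $[\alpha]_A \mapsto [f_\alpha]$ descends to a well-defined linear map
\[
\sigma\colon H^{0,1}_A(M) \longrightarrow C^{\infty}(M)\,\big/\,\tfrac{1}{n}\Lambda\ddbar\bigl(C^{\infty}(M)\bigr),
\]
whose kernel is exactly $\Image\phi$ by the $L^2$-identity above. The crux of the proof, and the step I expect to be the main obstacle, is to verify that $\dim\Image\sigma \leq 1$. I anticipate two viable routes: (i) a further use of the astheno-K\"ahler identity showing that all admissible $f_\alpha$ (those coming from $\partial\alpha$ with $\ddbar\alpha = 0$) lie, modulo $\tfrac{1}{n}\Lambda\ddbar(C^{\infty}(M))$, in the span of a single distinguished function built from the torsion or Lee form of $\omega$; or (ii) a direct construction of a linear functional $\Xi\colon H^{0,1}_A(M) \to \CC$, well-defined thanks to $\ddbar\omega^{n-2} = 0$ and vanishing on $\Image\phi$, which then factors the cokernel through $\CC$. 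The Hopf surface, on which $(h^{0,1}_{BC}(M), h^{0,1}_A(M)) = (0,1)$, shows that the $+1$ is sharp.
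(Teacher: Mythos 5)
Your overall architecture is the same as the paper's: the first inequality is the (always valid) injectivity of the natural map $H^{0,1}_{BC}\to H^{0,1}_A$, and the second is reduced, via the astheno-K\"ahler identity $\int_M\ddbar(\alpha\wedge\bar\alpha)\wedge\omega^{n-2}=0$ and the Hodge--Riemann sign computation, to the statement that the class of $\Lambda(\partial\alpha)$ modulo $\Lambda(\ddbar C^{\infty}(M))$ sweeps out at most a line. That $L^2$ identity, including the signs and the conclusion that a representative with $\partial\alpha$ primitive is automatically $d$-closed, is exactly the paper's computation and is correct. But the step you flag as ``the main obstacle'' --- $\dim\Image\sigma\leq 1$ --- is precisely the analytic heart of the theorem, and you have not proved it; as written, the argument establishes only $h^{0,1}_{BC}\leq h^{0,1}_A$ and a reduction, not the upper bound.

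The missing input is Gauduchon's theorem, and your anticipated route (ii) is exactly how the paper proceeds. Choose $f\in C^{\infty}(M,\RR)$ with $\ddbar\bigl(e^{(n-1)f}\omega^{n-1}\bigr)=0$ and set $L(\{\alpha\})=\int_M\partial\alpha\wedge e^{(n-1)f}\omega^{n-1}$; the Gauduchon condition makes $L$ well defined on $H^{0,1}_A(M)$, and it vanishes on the image of $H^{0,1}_{BC}(M)$ by Stokes. The point is then that $\ker L$ is contained in that image: the operator $P(h)=\Lambda(\ddbar h)$ (with $\Lambda$ taken for $e^f\omega$) is elliptic with $\ker P^*=\CC\cdot 1$, so by the elliptic decomposition $C^{\infty}(M)=P(C^{\infty}(M))\oplus\ker P^*$ one has $\Lambda(\partial\alpha)\in\Image P$ if and only if $\bigl(\Lambda(\partial\alpha),1\bigr)=0$, which is exactly $L(\{\alpha\})=0$; one then corrects $\alpha$ by $\bar\partial g$ to make $\partial\alpha$ primitive (primitivity of a $(1,1)$-form is conformally invariant, so it passes from $e^f\omega$ to $\omega$) and applies your $L^2$ identity. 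Equivalently, in your formulation, the same elliptic theory shows that the target $C^{\infty}(M)\big/\tfrac{1}{n}\Lambda\ddbar\bigl(C^{\infty}(M)\bigr)$ of $\sigma$ is itself one-dimensional (isomorphic to $\ker P^*$), so the bound $\dim\Image\sigma\leq 1$ is immediate once this is observed --- you were one standard fact away from closing the argument, but without it the proof is incomplete. Your route (i), seeking a distinguished function built from the torsion, is not what is needed and would not obviously terminate.
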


\smallskip

Such a result yields an obstruction to the existence of astheno-K\"ahler metrics on 
a given complex manifold. In Section \ref{applications} we test this obstruction against 
two classes of non-K\"ahler manifolds, the Nakamura and the Oeljeklaus-Toma 
manifolds, confirming that they cannot carry astheno-K\"ahler metrics.

Several applications are obtained by appealing to two results of independent interest  
regarding the  Bott-Chern and Aeppli cohomologies  of compact complex manifolds 
which are proved in Section \ref{BCcoh}. The first one is a very weak form of a 
K\"unneth formula. It should be pointed out that while a K\"unneth formula 
for the Dolbeault cohomology is available \cite{gh}, a similar formula is not known for the 
Bott-Chern and Aeppli cohomology theories. However, a weak 
form holds for $(0,1)$-forms, which suffices for the applications considered here.
 \begin{main}
 \label{weak-kunneth}
If $X$ and $Y$ are compact complex manifolds, then
\begin{align}
\label{wk}
h^{0,1}_{BC}(X\times Y)= h^{0,1}_{BC}(X)+h^{0,1}_{BC}(Y)\\ \notag
h^{0,1}_{A}(X\times Y)\geq h^{0,1}_{A}(X)+h^{0,1}_{A}(Y).
\end{align}
\end{main}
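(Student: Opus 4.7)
The plan is to build injective pullback maps from the right-hand side into the left-hand side for both Bott-Chern and Aeppli, and to use restriction along the inclusion of a fiber to detect injectivity (and, in the Bott-Chern case, to establish surjectivity via the Dolbeault K\"unneth formula). Let $p_X:X\times Y\to X$ and $p_Y:X\times Y\to Y$ be the projections, and for $y_0\in Y$ let $i_{y_0}:X\hookrightarrow X\times Y$, $x\mapsto(x,y_0)$, be the fiber inclusion (similarly $i_{x_0}$ for the other factor).

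For each of ${?}\in\{BC,A\}$, I would define
\[
\Phi^{?}:H^{0,1}_{?}(X)\oplus H^{0,1}_{?}(Y)\longrightarrow H^{0,1}_{?}(X\times Y),\qquad ([\alpha],[\beta])\longmapsto[p_X^*\alpha+p_Y^*\beta];
\]
this is well defined because holomorphic pullback commutes with $d$, $\partial$, $\bar\partial$ and $\partial\bar\partial$. The key step is injectivity. Suppose $p_X^*\alpha+p_Y^*\beta$ equals $0$ (the BC case, where there are no $\partial\bar\partial$-boundaries at bidegree $(0,1)$) or $\bar\partial F$ for some $F\in C^\infty(X\times Y)$ (the Aeppli case). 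Pulling back along $i_{y_0}$, and using that $p_X\circ i_{y_0}=\mathrm{id}_X$ while $p_Y\circ i_{y_0}$ is constant (so $i_{y_0}^*(p_Y^*\beta)=0$), the equation becomes either $\alpha=0$ or $\alpha=\bar\partial(F(\cdot,y_0))$. In either case $[\alpha]=0$, and symmetrically $[\beta]=0$, so $\Phi^{?}$ is injective. This already yields the Aeppli inequality.

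For the Bott-Chern equality I would still need surjectivity of $\Phi^{BC}$. Given $\gamma\in H^{0,1}_{BC}(X\times Y)$, its conjugate $\bar\gamma$ is a $d$-closed holomorphic $1$-form. Since $\Omega^1_{X\times Y}=p_X^*\Omega^1_X\oplus p_Y^*\Omega^1_Y$, the Dolbeault K\"unneth formula applied to $H^{1,0}$ gives $H^0(X\times Y,\Omega^1)\cong p_X^*H^0(X,\Omega^1_X)\oplus p_Y^*H^0(Y,\Omega^1_Y)$, so I can write $\bar\gamma=p_X^*\eta_X+p_Y^*\eta_Y$ with $\eta_X,\eta_Y$ holomorphic $1$-forms. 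Pulling back $d\bar\gamma=0$ along $i_{y_0}$ and $i_{x_0}$ yields $d\eta_X=0$ and $d\eta_Y=0$ separately, so $\bar\eta_X\in H^{0,1}_{BC}(X)$, $\bar\eta_Y\in H^{0,1}_{BC}(Y)$, and $\gamma=\Phi^{BC}([\bar\eta_X],[\bar\eta_Y])$.

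No step is a serious obstacle: the BC equality reduces, via conjugation, to the Dolbeault K\"unneth for $\Omega^1$, and the Aeppli injectivity relies on the elementary observation that restriction along a fiber kills pullbacks from the opposite factor. The reason one expects only an inequality for Aeppli is that a general $\partial\bar\partial$-closed $(0,1)$-form on $X\times Y$ need not differ from a pullback sum by $\bar\partial F$, so surjectivity of $\Phi^{A}$ is not automatic.
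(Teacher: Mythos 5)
Your proof is correct. The Bott--Chern half is essentially the paper's argument: both exploit that $H^{0,1}_{BC}$ consists of honest $d$-closed $(0,1)$-forms (no $\partial\bar\partial$-boundaries in that bidegree) to get injectivity of the sum-of-pullbacks map, and both get surjectivity by passing to the conjugate holomorphic $1$-form, applying the Dolbeault K\"unneth formula in bidegree $(1,0)$, and then recovering $d$-closedness of each summand separately --- you do this by restricting to fibers, the paper by inspecting local coordinates, which is the same computation. Where you genuinely diverge is the Aeppli inequality. The paper fixes Hermitian metrics $\omega_X,\omega_Y$, passes to Schweitzer's harmonic spaces $\mathcal H^{0,1}_A=\{\alpha\,:\,\partial\bar\partial\alpha=0,\ \bar\partial^*\alpha=0\}$, and shows by a Hodge-star computation (for a $(0,1)$-form $\gamma$ one has $\bar\partial^*\gamma=0$ iff $\partial(\gamma\wedge\omega^{p-1})=0$) that $p_X^*\alpha+p_Y^*\beta$ is Aeppli-harmonic for the product metric whenever $\alpha$ and $\beta$ are; injectivity is then automatic at the level of forms. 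You instead argue directly on Aeppli classes: since $\mathcal A^{-1,1}=0$, the Aeppli-exact $(0,1)$-forms are exactly the $\bar\partial F$ with $F$ a smooth function, and pulling back $p_X^*\alpha+p_Y^*\beta=\bar\partial F$ along the holomorphic slice $x\mapsto(x,y_0)$ yields $\alpha=\bar\partial\bigl(F(\cdot,y_0)\bigr)$, killing the class of $\alpha$ (and symmetrically that of $\beta$). This is valid and more elementary --- no choice of metrics, no harmonic theory --- whereas the paper's formulation buys a concrete identification of the image inside $\mathcal H^{0,1}_A(X\times Y)$. Your closing remark on why only an inequality can be expected for Aeppli matches the paper's stance. (Both arguments, yours and the paper's, tacitly assume $X$ and $Y$ connected so that $H^0(\mathcal O)=\mathbb C$ in the K\"unneth step; this is the standard convention.)
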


This  result is used in combination with Theorem \ref{c-ineq} in Section \ref{applications} 
to study the existence of astheno-K\"ahler metrics on Cartesian products. As a particular 
case, the following result is obtained:

\begin{main}
\label{cart-surfaces}
A Cartesian product of two compact complex surfaces admits an astheno-K\"ahler 
metric if and only if at least one of the surfaces admits a K\"ahler metric.
\end{main}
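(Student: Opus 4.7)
For the ``if'' direction, assume without loss of generality that $X$ carries a K\"ahler metric $\omega_X$ and equip $Y$ with a Gauduchon metric $\omega_Y$, which exists by Gauduchon's theorem and satisfies $\ddbar\omega_Y=0$. On $M=X\times Y$ (complex dimension $n=4$, so the astheno-K\"ahler condition reads $\ddbar\omega^2=0$) I would set $\omega := \pi_X^*\omega_X + \pi_Y^*\omega_Y$ and expand
\begin{equation*}
\omega^2 = \pi_X^*\omega_X^2 + 2\,\pi_X^*\omega_X \wedge \pi_Y^*\omega_Y + \pi_Y^*\omega_Y^2.
\end{equation*}
The first summand is $d$-closed because $\omega_X$ is K\"ahler, the third is $d$-closed because $\omega_Y^2$ is already of top bidegree on $Y$, and a Leibniz computation using $d\omega_X=0$ reduces $\ddbar$ of the middle term to $\omega_X \wedge \ddbar\omega_Y = 0$ by the Gauduchon condition. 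Hence $\omega$ is astheno-K\"ahler.

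For the ``only if'' direction I argue by contrapositive. If $X \times Y$ admits an astheno-K\"ahler metric, then Theorem~\ref{c-ineq} applied to $M = X\times Y$ combined with both parts of Theorem~\ref{weak-kunneth} immediately gives
\begin{equation*}
\bigl(h^{0,1}_A(X) - h^{0,1}_{BC}(X)\bigr) + \bigl(h^{0,1}_A(Y) - h^{0,1}_{BC}(Y)\bigr) \;\leq\; 1.
\end{equation*}
The proof therefore reduces to showing that on a non-K\"ahler compact complex surface $S$ the gap $h^{0,1}_A(S) - h^{0,1}_{BC}(S)$ is at least $1$; granted this, having both $X$ and $Y$ non-K\"ahler contradicts the displayed inequality.

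The surface-theoretic estimate is the main obstacle, and I would handle it by computing each side separately on $S$. Since $A^{-1,0}=0$, the Bott-Chern group $H^{0,1}_{BC}(S)$ coincides with the space of $d$-closed $(0,1)$-forms, which conjugation identifies with the $d$-closed holomorphic $1$-forms; invoking the classical result that every holomorphic $1$-form on a compact complex surface is $d$-closed gives $h^{0,1}_{BC}(S) = h^{1,0}(S)$. On the Aeppli side, the natural map $H^{0,1}_{\dbar}(S) \to H^{0,1}_A(S)$ is injective (a $\dbar$-closed form lying in $\dbar A^{0,0}$ is already $\dbar$-exact), so $h^{0,1}_A(S) \geq h^{0,1}(S)$. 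Finally, on any non-K\"ahler compact complex surface one has $h^{0,1}(S) = h^{1,0}(S) + 1$, a consequence of Fr\"olicher degeneracy at $E_1$ on surfaces together with the odd parity of $b_1(S)$. Combining these three surface facts yields $h^{0,1}_A(S) - h^{0,1}_{BC}(S) \geq 1$, which completes the argument. The construction side and the assembly of Theorems~\ref{c-ineq} and~\ref{weak-kunneth} are essentially formal; all the real content sits in the identifications $h^{0,1}_{BC}(S)=h^{1,0}(S)$ and $h^{0,1}(S)-h^{1,0}(S)=1$ on non-K\"ahler surfaces.
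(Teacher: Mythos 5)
Your proposal is correct and follows essentially the same route as the paper: the ``if'' direction is exactly the paper's construction in Proposition \ref{astheno-simple} (sum of a K\"ahler metric on one factor and a Gauduchon metric on the surface factor, with the same term-by-term vanishing of $\ddbar\omega^2$), and the ``only if'' direction is the paper's Corollary \ref{obs-prod-surf}, obtained by combining Theorems \ref{c-ineq} and \ref{weak-kunneth} with the classical surface facts recorded in Remark \ref{surfaces} (namely $h^{0,1}_{BC}(S)=h^{1,0}_{\bar\partial}(S)$ via closedness of holomorphic $1$-forms and Kodaira's $h^{0,1}_{\bar\partial}=h^{1,0}_{\bar\partial}+1$ on non-K\"ahler surfaces). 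The only cosmetic difference is that you use the inequality $h^{0,1}_A(S)\geq h^{0,1}_{\bar\partial}(S)$ where the paper invokes the equality $h^{0,1}_A=h^{0,1}_{\bar\partial}$ on surfaces, which changes nothing.
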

It is noticed in Remark \ref{non-jy} that to prove that a Cartesian of two non-K\"ahler 
surfaces does not carry an astheno-K\"ahler metric, the Jost-Yau obstruction provides 
no relevant information.

\smallskip

A second result of independent interest obtained in Section \ref{BCcoh} concerns the 
Bott-Chern and Aeppli cohomologies for nilmanifolds with nilpotent complex structure: 
\begin{main}
\label{torality}
Let $M$ be non-K\"ahler nilmanifold equipped with a nilpotent complex structure. 
If $h_{A}^{0,1}(M)=h_{BC}^{0,1}(M),$ then $M$ is a compact complex torus.
\end{main}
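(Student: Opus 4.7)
The plan is to reduce the question to a Lie-algebra computation. Write $M=G/\Gamma$ where $G$ is a simply-connected nilpotent Lie group with Lie algebra $\mathfrak{g}$. The key external input I would invoke is the extension to Bott-Chern and Aeppli cohomology of the Nomizu/Console-Fino theorem: for nilmanifolds with nilpotent complex structure, the inclusion of left-invariant forms induces isomorphisms $H_{BC}^{0,1}(\mathfrak{g})\cong H_{BC}^{0,1}(M)$ and $H_A^{0,1}(\mathfrak{g})\cong H_A^{0,1}(M)$.

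At the invariant level the definitions simplify: the denominator in $H_{BC}^{0,1}(\mathfrak{g})$ is the image of $\ddbar$ on $\Lambda^{-1,0}\mathfrak{g}^*=0$, while the denominator in $H_A^{0,1}(\mathfrak{g})$ consists of $\bar\partial$ applied to invariant functions (i.e.\ constants), which also vanishes. Hence
$$
H_{BC}^{0,1}(\mathfrak{g})=\ker d\big|_{\Lambda^{0,1}\mathfrak{g}^*},\qquad H_{A}^{0,1}(\mathfrak{g})=\ker\ddbar\big|_{\Lambda^{0,1}\mathfrak{g}^*},
$$
and the canonical map $H_{BC}^{0,1}(\mathfrak{g})\to H_A^{0,1}(\mathfrak{g})$ is the inclusion of one subspace of $\Lambda^{0,1}\mathfrak{g}^*$ into another. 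The hypothesis $h_{BC}^{0,1}(M)=h_A^{0,1}(M)$ thus forces $\ker d=\ker\ddbar$ on invariant $(0,1)$-forms.

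I would then argue by contradiction: suppose $\mathfrak{g}$ is not abelian. Choose a basis $\omega^{1},\dots,\omega^{n}$ of invariant $(1,0)$-forms adapted to the nilpotent complex structure, so that $d\omega^{j}\in\bigwedge^{2}\langle\omega^{1},\dots,\omega^{j-1},\bar\omega^{1},\dots,\bar\omega^{j-1}\rangle$ for every $j$, and let $i$ be the smallest index with $d\omega^{i}\neq 0$. Then $d\bar\omega^{j}=0$, and in particular $\partial\bar\omega^{j}=\bar\partial\bar\omega^{j}=0$, for every $j<i$. Setting $\alpha:=\bar\omega^{i}$, the $(0,2)$-component $\bar\partial\alpha$ is a linear combination of terms $\bar\omega^{k}\wedge\bar\omega^{l}$ with $k,l<i$; applying $\partial$ and using the Leibniz rule together with $\partial\bar\omega^{k}=0$ gives $\ddbar\alpha=0$. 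However $d\alpha=\overline{d\omega^{i}}\neq 0$, so $\alpha\in\ker\ddbar\setminus\ker d$, contradicting the previous paragraph. Hence $\mathfrak{g}$ is abelian, and $M=G/\Gamma$ is a compact complex torus.

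The principal obstacle is the very first step: while the Nomizu/Console-Fino isomorphism for Dolbeault cohomology on nilmanifolds is classical, its Bott-Chern/Aeppli counterpart on nilmanifolds with nilpotent complex structure relies on more recent work that I would cite rather than reprove. Once this reduction is in place, the remainder is the short adapted-basis computation sketched above, and I expect no further difficulty.
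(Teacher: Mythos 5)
Your proposal is correct and follows essentially the same route as the paper: reduce to left-invariant forms via the Angella-type isomorphism $H^{0,1}_{\#}(\mathfrak g_\CC)\simeq H^{0,1}_{\#}(M)$ for nilmanifolds with nilpotent complex structure, observe that at the invariant level both groups are kernels inside $\Lambda^{0,1}\mathfrak g^*$ so equal dimensions force $\ker d=\ker\ddbar$, and then use the adapted structure equations to show every $\ddbar$-closed generator is $d$-closed. The only cosmetic differences are that the paper first conjugates to $(1,0)$-forms via the symmetry $H^{p,q}_{\#}=\overline{H^{q,p}_{\#}}$ and runs the final step as an induction rather than a minimal-counterexample argument.
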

As an immediate application of Theorems \ref{c-ineq} and \ref{torality} we obtain:
\begin{coro}
\label{nil-nilp}
Let $M$ be non-K\"ahler nilmanifold equipped with a nilpotent complex structure. 
If $M$ admits an astheno-K\"ahler metric then $h_{A}^{0,1}(M)=h_{BC}^{0,1}(M)+1.$
\end{coro}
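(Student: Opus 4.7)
The plan is to combine Theorem \ref{c-ineq} and Theorem \ref{torality} directly, with the non-K\"ahler hypothesis providing the tiebreaker between the two admissible cases. The argument is essentially a dichotomy on the integer $h^{0,1}_A(M) - h^{0,1}_{BC}(M)$.

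First, I would invoke Theorem \ref{c-ineq} applied to the astheno-K\"ahler manifold $M$ to obtain the two-sided bound
\[
h^{0,1}_{BC}(M) \le h^{0,1}_A(M) \le h^{0,1}_{BC}(M) + 1.
\]
Since both sides are integers, only two possibilities arise: either $h^{0,1}_A(M) = h^{0,1}_{BC}(M)$ or $h^{0,1}_A(M) = h^{0,1}_{BC}(M) + 1$.

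Next I would rule out the equality case. If $h^{0,1}_A(M) = h^{0,1}_{BC}(M)$, then Theorem \ref{torality} applied to the nilmanifold $M$ equipped with its nilpotent complex structure forces $M$ to be a compact complex torus. But a compact complex torus carries flat K\"ahler metrics, contradicting the assumption that $M$ is non-K\"ahler. Hence the equality case cannot occur, and we are left with $h^{0,1}_A(M) = h^{0,1}_{BC}(M) + 1$, which is the claim.

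There is no substantive obstacle here: the corollary is a pure logical consequence of the two theorems together with the elementary observation that complex tori are K\"ahler. The only care needed is to confirm that Theorem \ref{torality} is applicable under exactly the hypotheses provided (nilmanifold with nilpotent complex structure, non-K\"ahler), which matches verbatim. Thus the proof reduces to one line once the two theorems are in hand.
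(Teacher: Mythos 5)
Your proof is correct and follows exactly the route the paper intends: the paper presents this corollary as an immediate consequence of Theorems \ref{c-ineq} and \ref{torality}, and your dichotomy argument (with the observation that a compact complex torus is K\"ahler ruling out the equality case) is precisely that deduction.
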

In small dimensions, there are known examples of nilmanifolds with nilpotent complex 
structure saturating the upper bound and carrying invariant astheno-K\"ahler metric 
\cite{ft2,fgv, rt, st}. In Section \ref{eao} it is noticed that there exist astheno-K\"ahler 
metrics on nilmanifolds with nilpotent complex structure of arbitrary dimension 
(see Theorem \ref{ex-nilp}).

\smallskip

Conversely, one may ask if a compact complex manifold saturating either one 
of the bounds in (\ref{ineq}) carries an astheno-K\"ahler metric. For the upper 
bound, it is already known that in dimension six the answer is negative. For 
example, the Iwasawa manifold has $h^{0,1}_{BC}=2$ and $h^{0,1}_A=3$ 
\cite{angella}, and yet it does not carry astheno-K\"ahler metrics, as it supports 
non-closed holomorphic $1$-forms, contradicting the Jost-Yau criterion. More 
examples of nilmanifolds satisfying similar properties can be found in dimension 
six from the classification of the SKT structures in \cite{fps} and the Bott-Chern 
cohomology computations in \cite{luv}.  For the upper bound, in Section 
\ref{examples} one more class of relevant examples is indicated.
\begin{main}
\label{vaisman}
Any compact Vaisman manifold of dimension at least three satisfies 
$h^{0,1}_A=h^{0,1}_{BC}+1$ and carries no astheno-K\"ahler metric. 
\end{main}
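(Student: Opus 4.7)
The proof splits naturally into the non-existence of astheno-K\"ahler metrics and the cohomological identity, which I will handle independently.

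Fix a Vaisman metric on $M$ with fundamental form $\omega$, Lee form $\theta$ (so $d\omega = \theta\wedge\omega$ and $\nabla\theta = 0$), and decompose $\theta = \eta + \bar\eta$ with $\eta$ of type $(0,1)$. The identity $d\theta = 0$ gives $dJ\theta = 2i\,\bar\partial\bar\eta$, while the Vaisman condition $L_{\theta^\sharp}\omega = 0$ (using $i_{\theta^\sharp}\omega = -J\theta$ and Cartan's formula) yields $dJ\theta = \omega + \theta\wedge J\theta$. Combining them, the \emph{transverse K\"ahler form} $\omega_T := \omega + \theta\wedge J\theta = 2i\,\bar\partial\bar\eta$ is basic, $d$-closed and weakly positive. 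Using $\partial\eta = -\bar\partial\bar\eta$, a direct Leibniz computation then gives
\[
\omega_T^2 \;=\; 4\,\ddbar(\bar\eta\wedge\eta),
\]
exhibiting $\omega_T^2$ as $\ddbar$-exact and weakly positive. In complex dimension at least three, $\omega_T^2$ is non-vanishing (the transverse distribution has complex rank at least two), and the Fino--Grantcharov--Vezzoni criterion \cite{fgv} rules out every astheno-K\"ahler metric on $M$.

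For the cohomological identity, $\bar\partial\eta = 0$ gives $\ddbar\eta = 0$, so $\eta$ represents an Aeppli class $[\eta]_A\in H^{0,1}_A(M)$. I claim the natural map $H^{0,1}_{BC}(M)\to H^{0,1}_A(M)$ is injective with one-dimensional cokernel spanned by $[\eta]_A$. For the inequality $h^{0,1}_A\geq h^{0,1}_{BC}+1$, assume $\eta - \alpha = \bar\partial f$ for some $d$-closed $\alpha\in\Omega^{0,1}$ and $f\in C^\infty(M,\mathbb C)$; adding the complex conjugate, $\theta = (\alpha + \bar\alpha) + \bar\partial f + \partial\bar f$, so $[\theta]_{dR}$ lies in the image of $H^{1,0}_{BC}\oplus H^{0,1}_{BC}\to H^1_{dR}(M,\mathbb C)$. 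But on a compact Vaisman manifold the decomposition of the first cohomology (Tsukada, Vaisman) gives $H^1(M,\mathbb C) = H^{1,0}_{BC}\oplus H^{0,1}_{BC}\oplus\mathbb C\cdot[\theta]$, so $[\theta]$ is not in this image---a contradiction.

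The reverse bound $h^{0,1}_A\leq h^{0,1}_{BC}+1$ is the main technical obstacle. My plan is to invoke Serre-type duality $h^{0,1}_A(M) = h^{n,n-1}_{BC}(M)$ combined with explicit computations of Bott--Chern cohomology for Vaisman manifolds via their transverse Sasakian geometry. Alternatively, averaging a $\ddbar$-closed $(0,1)$-representative under the flow of the Lee field $\theta^\sharp$ produces a flow-invariant representative, which decomposes into a basic (transverse) piece plus a scalar multiple of $\eta$; the basic piece then descends to a $d$-closed form on the Sasakian quotient and so contributes a Bott--Chern class on $M$, delivering the required bound.
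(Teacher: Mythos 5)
Your non-existence argument is correct and is essentially the same computation as the paper's: with $\theta=\eta+\bar\eta$ and $d\theta=0$ one gets $\partial\bar\eta=0$, $\bar\partial\bar\eta=-\partial\eta$, and the transverse form $\omega_T=2i\bar\partial\bar\eta$ is positive semi-definite of rank $n-1$ (Verbitsky), so $\omega_T^2=4\,\ddbar(\bar\eta\wedge\eta)$ is weakly positive, $\ddbar$-exact and non-vanishing for $n\ge 3$, and Theorem \ref{hl-obs} applies. The paper packages the same positivity as $i\ddbar\omega=-2(i\bar\partial\alpha)\wedge(i\bar\partial\alpha)\le 0$, integrates against $\eta^{n-2}$ to conclude that $\omega$ would have to be SKT, and then invokes Alexandrov--Ivanov; the two routes are interchangeable. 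Your lower-bound argument ($[\eta]_A$ not in the image of $H^{0,1}_{BC}$, via Tsukada's decomposition $H^1=H^{1,0}\oplus H^{0,1}\oplus\CC[\theta]$) is also sound, though you should make explicit the step where $\bar\partial f+\partial\bar f$ becomes $d$-exact: closedness of $\theta-(\alpha+\bar\alpha)$ forces $\ddbar(f-\bar f)=0$, hence $\mathrm{Im}\,f$ is constant by the maximum principle and $\bar\partial f+\partial\bar f=df$.

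The genuine gap is the upper bound $h^{0,1}_A\le h^{0,1}_{BC}+1$. What you offer is a plan, not a proof: ``invoke duality combined with explicit computations of Bott--Chern cohomology via transverse Sasakian geometry'' is precisely the content of the Istrati--Otiman computation that the paper cites (\cite{io}, Theorem 4.2, together with $h^{0,1}_A=h^{n,n-1}_{BC}$), so either carry it out or cite it. Note that Theorem \ref{c-ineq} cannot supply this bound here, since you have just shown $M$ carries no astheno-K\"ahler metric. Your alternative averaging sketch also has unaddressed difficulties: averaging under the Lee flow preserves the Aeppli class (the flow acts trivially on cohomology) and the $\ddbar$-closedness, but the claimed decomposition of an invariant $(0,1)$-form into a basic piece plus a multiple of $\eta$ requires handling the anti-Lee direction $J\theta^\sharp$ as well, and the assertion that the basic piece is $d$-closed (rather than merely $\ddbar$-closed) is exactly the point at issue; as written it assumes what is to be proved. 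Until one of these routes is completed, the identity $h^{0,1}_A=h^{0,1}_{BC}+1$ is only established as the inequality $h^{0,1}_A\ge h^{0,1}_{BC}+1$.
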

As a consequence, an old conjecture of Li, Yau and Zheng \cite[page 108]{lyz} 
is confirmed:
\begin{coro}
\label{lyz-answer}
There exists no astheno-K\"ahler metrics on similarity Hopf manifolds of 
dimension at least three.
\end{coro}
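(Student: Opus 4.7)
The plan is to deduce Corollary \ref{lyz-answer} from Theorem \ref{vaisman} by exhibiting a Vaisman metric on every similarity Hopf manifold $H_{A}=(\CC^{n}\setminus\{0\})/\langle A\rangle$ of complex dimension $n\geq 3$, where $A=\lambda U$ with $0<\lambda<1$ and $U\in U(n)$. On the universal cover I would take the fundamental form
\[
\omega=\frac{i}{|z|^{2}}\sum_{j=1}^{n}dz_{j}\wedge d\bar z_{j}.
\]
The unitary factor $U$ preserves $|z|$ and $A$ scales $|z|^{2}$ by $\lambda^{2}$, from which one checks $A^{*}\omega=\omega$, so the form descends to $H_{A}$. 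A direct computation then gives $d\omega=\theta\wedge\omega$ with $\theta=-d\log|z|^{2}$, hence $\omega$ is locally conformally K\"ahler.

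To verify that $\theta$ is parallel for the Levi-Civita connection of the associated metric $g$ (the remaining defining condition for the Vaisman property), I would make the change of variables $\rho=\log|z|$, under which $(\CC^{n}\setminus\{0\},g)$ is isometric to the Riemannian product $\RR\times S^{2n-1}$ endowed with its cylindrical metric $d\rho^{2}+g_{S^{2n-1}}$. In these coordinates $\theta=-2\,d\rho$ is manifestly parallel, so $\omega$ is Vaisman on the universal cover. The generator $A$ acts as the composition of the translation $\rho\mapsto\rho+\log\lambda$ along $\RR$ with the isometric unitary action $U$ on $S^{2n-1}$, both of which preserve the Vaisman data; the structure therefore descends to the quotient $H_{A}$.

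With the Vaisman structure in place, Corollary \ref{lyz-answer} follows immediately from Theorem \ref{vaisman}, confirming the Li--Yau--Zheng conjecture in all dimensions at least three. The one point requiring care is the extension beyond the classical Hopf case $A=\lambda\,\mathrm{Id}$ to an arbitrary similarity, but this is standard and is in any case subsumed by the Gauduchon--Ornea and Kamishima--Ornea classification of Vaisman structures on Hopf-type quotients, so I do not expect any genuine obstacle here.
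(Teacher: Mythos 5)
Your explicit cylinder construction of a Vaisman metric on $H_{A}=(\CC^{n}\setminus\{0\})/\langle \lambda U\rangle$ is correct and in effect reproves, in this special case, the Kamishima--Ornea result that the paper simply cites: the form $\omega=\tfrac{i}{|z|^{2}}\sum dz_{j}\wedge d\bar z_{j}$ is $\langle\lambda U\rangle$-invariant, is LCK with $\theta=-d\log|z|^{2}$, and becomes the product metric $d\rho^{2}+g_{S^{2n-1}}$ under $\rho=\log|z|$, so $\theta=-2\,d\rho$ is parallel. Up to that point your argument matches the paper's in substance.

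The genuine gap is that you have misidentified ``the one point requiring care.'' It is not the passage from $A=\lambda\,\mathrm{Id}$ to $A=\lambda U$ (which you handle correctly), but the fact that in the paper's definition a similarity Hopf manifold $X$ is only a \emph{finite undercover} of the Hopf manifold $M=(\CC^{n}\setminus\{0\})/\langle\phi\rangle$: there is a finite unramified covering $\pi:M\to X$, so $X$ is a further finite quotient of $M$. Your argument puts a Vaisman structure on $M$ only, and Theorem \ref{vaisman} applied to $M$ says nothing directly about $X$. Descending the Vaisman structure to $X$ is not automatic --- the deck group of $\pi$ need not preserve your particular metric, and you cannot simply average a Vaisman metric --- and you neither carry out nor acknowledge this step. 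The paper's proof sidesteps the issue entirely by arguing in the opposite direction: assuming $X$ carries an astheno-K\"ahler metric $\omega$, it pulls it back along $\pi$ and uses
\[
\ddbar\,(\pi^{*}\omega)^{n-2}=\pi^{*}\ddbar\,\omega^{n-2}=0,
\]
so that $M$ itself would be an astheno-K\"ahler \emph{diagonal} Hopf manifold (diagonal because $aA$ with $A$ unitary is diagonalizable with all eigenvalues of modulus $a>1$), contradicting Theorem \ref{vaisman} applied to the Vaisman manifold $M$. Inserting this pull-back step repairs your proposal; without it the corollary is only proved for the primary Hopf quotients themselves, not for all similarity Hopf manifolds.
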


 \smallskip
 
For the lower bound, in Section \ref{examples} two examples are exhibited, a 
solvmanifold of complex dimension $2n,\,n\geq 3$ and the Fujiki class $\mathcal C$ 
non-K\"ahler manifolds of dimension three, which do not carry astheno-K\"ahler 
metrics while saturating the lower bound in (\ref{ineq}). In particular, one can see 
that the class of astheno-K\"ahler manifolds is not invariant under modifications 
(see Corollary \ref{modifications}).
The authors are not aware of any example of a non-K\"ahler, compact, complex, 
astheno-K\"ahler manifold satisfying $h^{0,1}_{BC}=h^{0,1}_{A}.$

%%%%%%%%%%%%%%%%%%%%%%%%%%%%%%%%%%%%%%%%%%%%%%%%%%%
%%%%%%%%%%%%%%%%%%%%%%%%%%%%%%%%%%%%%%%%%%%%%%%%%%%
%%%%%%%%%%%%%%%%%%%%%%%%%%%%%%%%%%%%%%%%%%%%%%%%%%%
%%%%%%%%%%%%%%%%%%%%%%%%%%%%%%%%%%%%%%%%%%%%%%%%%%%

\section{Existence and obstructions}
\label{eao}

\begin{defn}
Let $X$ be an $n$-dimensional complex manifold.  $X$ is called $p$-pluriclosed 
if it admits a Hermitian metric $g$ whose fundamental form $\omega$ satisfies 
 $$
 i\partial\bar\partial \omega^{n-p}=0.
 $$
\end{defn}

Gauduchon  showed that $1$-Gauduchon metrics always exist \cite{gauduchon}, a result 
with widespread implications, and such metrics are known as  {\it{Gauduchon metrics}}. 
The $2$-Gauduchon metrics appeared for the first time in the work of Jost and Yau \cite{jy} 
under the name of {\it{astheno-K\"ahler metrics}}, and used in a variety of applications 
\cite{ct, jy,lyz}. The $(n-1)$-pluriclosed metrics were introduced by Bismut \cite{bismut} 
and have received a lot of attention in the recent years under different names, such as  
{\it strongly K\"ahler with torsion} (SKT) or {\it pluriclosed metrics}.

%%%%%%%%%%%%%%%%%%%%%%%%%%%%%%%%%%%%
%%%%%%%%%%%%%%%%%%%%%%%%%%%%%%%%%%%%
%%%%%%%%%%%%%%%%%%%%%%%%%%%%%%%%%%%%

\subsection{Existence of astheno-K\"ahler metrics on nilmanifolds} 
\label{e-ak-nil}

There are currently a few general methods to construct non-K\"ahler 
astheno-K\"ahler metrics. The most powerful results are obtained on 
complex nilmanifolds. 

\begin{defn}
A complex nilmanifold  $\Gamma\backslash G$ is a quotient of a 
simply-connected, connected nilpotent Lie group $G$ endowed with a 
left invariant integrable almost complex structure by a lattice  
$\Gamma\subset G$ of maximal rank.
\end{defn}

A nilmanifold  $\Gamma\backslash G$ inherits its complex structure from 
that of $G$ by passing to the quotient. 

In complex dimension three, where the notions of SKT and astheno-K\"ahler 
metrics coincide, we have the following result of Fino, Parton and Salamon: 

\begin{thm}[Theorem 1.2 \cite{fps}]
\label{fps-structure}
Let $M=\Gamma\backslash G,$ be a (real) six-dimensional nilmanifold with 
an invariant complex structure $J.$ Then the SKT condition is satisfied if and 
only if $M$ has a basis $\alpha_i, i=1,2,3$ of $(1,0)$-forms such that:
\begin{equation}
\label{3eq}
\begin{cases}
d\alpha_1 = 0\\
d \alpha_2=0 \\
d\alpha_3 = A\bar \alpha_1\wedge \alpha_2 + 
B\bar \alpha_2\wedge \alpha_2 +
C\alpha_1\wedge \bar \alpha_1 + 
D\alpha_1\wedge \bar \alpha_2 + 
E\alpha_1\wedge \alpha_2,
\end{cases}
\end{equation}
where  $A, B, C, D, E$ are complex numbers such that
$$
|A|^2 + |D|^2 + |E|^2 + 2\mathfrak{Re}(\bar BC) = 0. 
$$
\end{thm}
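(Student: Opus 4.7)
The plan is to reduce the existence of an SKT metric to an algebraic identity on the Lie algebra $\mathfrak{g}=\mathrm{Lie}(G)$. By the symmetrization result of Fino--Grantcharov, a six-dimensional nilmanifold with invariant complex structure carries an SKT metric if and only if it carries a left-invariant one, so I may assume $\omega$ is determined by a positive Hermitian inner product $h=(h_{j\bar k})$ on $\mathfrak{g}^{1,0}$, and that $\ddbar\omega=0$ becomes an identity of invariant $4$-forms on $\mathfrak{g}^{*}$. A preliminary structural step, using Salamon's classification, shows that an invariant SKT metric forces $J$ to be nilpotent in the sense of Salamon (otherwise SKT is ruled out already), so I may further restrict to nilpotent $J$ throughout.

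Next I would invoke Salamon's structure theorem to select a basis $\beta_{1},\beta_{2},\beta_{3}$ of invariant $(1,0)$-forms adapted to the ascending type filtration: $d\beta_{1}=0$, $d\beta_{2}\in\Lambda^{2}\langle\beta_{1},\bar\beta_{1}\rangle$, and $d\beta_{3}\in\Lambda^{2}\langle\beta_{1},\beta_{2},\bar\beta_{1},\bar\beta_{2}\rangle$; integrability of $J$ then forces the $(0,2)$-component of each $d\beta_{i}$ to vanish. A further linear change of basis in $\mathfrak{g}^{1,0}$ absorbs the $\beta_{1}\wedge\bar\beta_{1}$ coefficient of $d\beta_{2}$ into a redefinition of $\beta_{3}$, producing a basis $\alpha_{1},\alpha_{2},\alpha_{3}$ with $d\alpha_{1}=d\alpha_{2}=0$ and $d\alpha_{3}$ equal to the five-term expression in the statement; the integrability identity $d^{2}\alpha_{3}=0$ is then automatic.

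It remains to expand $\ddbar\omega=0$ in this normalized basis. Writing $\omega=\tfrac{i}{2}\sum_{j,k}h_{j\bar k}\,\alpha_{j}\wedge\bar\alpha_{k}$, one uses only the nontrivial equation for $d\alpha_{3}$ to compute $d\omega$, and then the fact that $\alpha_{1},\alpha_{2}$ are closed to see that $\ddbar$ annihilates every summand of $\omega$ not involving $\alpha_{3}$ or $\bar\alpha_{3}$. A direct expansion of $\ddbar(h_{3\bar 3}\,\alpha_{3}\wedge\bar\alpha_{3})$ together with the cross terms produces $\ddbar\omega=\lambda\,h_{3\bar 3}\,\bigl(|A|^{2}+|D|^{2}+|E|^{2}+2\Re(\bar BC)\bigr)\,\alpha_{1}\wedge\bar\alpha_{1}\wedge\alpha_{2}\wedge\bar\alpha_{2}$ for a nonzero numerical constant $\lambda$. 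Crucially, the coefficient is independent of the off-diagonal entries of $h$, so the SKT condition reduces, for any positive $h$, to the single algebraic identity in the statement.

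The main obstacle will be the second step: carrying out the change of basis that simultaneously eliminates $d\alpha_{2}$ and brings $d\alpha_{3}$ into the stated normal form, without reintroducing a $(0,2)$-component that would violate integrability. This is a finite-dimensional bookkeeping argument rather than a conceptual one, but the resulting fact that the SKT obstruction is metric-independent is what makes the theorem both clean and broadly useful for classification.
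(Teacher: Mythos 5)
The paper itself does not prove this statement: it is quoted from Fino--Parton--Salamon \cite{fps}, so your proposal can only be compared with the original proof there. Your overall architecture matches it: symmetrize to reduce to a left-invariant metric, normalize the complex structure via Salamon's filtration, then expand $\partial\bar\partial\omega$ in the resulting basis. The final computation is essentially correct: once $d\alpha_1=d\alpha_2=0$ and $d\alpha_3$ has the stated form, one checks $\partial\bar\partial\alpha_3=0$, so the cross terms $h_{j\bar 3}\,\alpha_j\wedge\bar\alpha_3$ contribute nothing and $\partial\bar\partial\omega$ collapses to $-h_{3\bar 3}\bigl(\bar\partial\alpha_3\wedge\partial\bar\alpha_3-\partial\alpha_3\wedge\bar\partial\bar\alpha_3\bigr)$, whose unique coefficient is a positive multiple of $|A|^2+|D|^2+|E|^2+2\Re(\bar BC)$. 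That establishes the ``if'' direction and the metric-independence you highlight.

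The gap is in the ``only if'' direction, at precisely the step you defer. Salamon's theorem gives only $d\beta_1=0$ and $d\beta_2$ lying in the ideal generated by $\beta_1,\bar\beta_1$; generically $d\beta_2\neq 0$ (for instance $d\beta_2=\beta_1\wedge\bar\beta_1$ defines a perfectly good nilpotent complex structure), and no redefinition of $\beta_3$ can alter $d\beta_2$. Producing \emph{two} independent closed $(1,0)$-forms is therefore not a linear-algebra normalization: it is a consequence of the SKT hypothesis and is the actual content of the theorem. In \cite{fps} this is extracted by imposing $\partial\bar\partial\omega=0$ for a general invariant Hermitian form against a general Salamon basis and showing that the resulting equations on the structure constants force the obstruction in $d\beta_2$ to vanish (and, along the way, exclude the non-nilpotent complex structures). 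Your proposal never lets the SKT condition act on $d\beta_2$ at all, so as written the reduction to the normal form (\ref{3eq}) is unsupported; calling it ``bookkeeping'' conceals that the metric condition must enter exactly there. To repair the argument you would need to carry out (or cite) that case analysis before the final expansion of $\partial\bar\partial\omega$.
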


\begin{rmk}
There are $18$ isomorphism classes of the underlying nilpotent Lie algebras 
\cite{sal}, out of which one is the compact complex three-dimensional torus, 
the only K\"ahler example, and only four of them are non-K\"ahler and admit 
SKT metrics \cite[Theorem 3.2]{fps}. The interested reader is also referred to 
\cite{ugarte} for a different proof of these result. 
\end{rmk}

To construct examples of left invariant astheno-K\"ahler metrics on complex 
nilmanifolds in arbitrary dimension it is useful to impose suitable conditions on 
the structure equations.  Generalizing Theorem \ref{fps-structure}, examples 
of astheno-K\"ahler metrics with interesting properties were found in the recent 
years on nilmanifolds with nilpotent complex structure \cite{ft2,fgv,rt,st,lu}.

\smallskip

Let $G$ be a nilpotent group of dimension $2n,$ with Lie algebra $\mathfrak g.$ 
Suppose $G$ carries a left invariant integrable almost complex structure $J.$ 
An ascending series $\{\mathfrak a_l; l\geq 0\}$ compatible with $J$ is defined 
inductively by 
\begin{equation*}
\mathfrak a_0=0,\quad \mathfrak a_l=
\{X\in \mathfrak g\,|\,[X,\mathfrak g] \subseteq \mathfrak a_{l-1} 
\,{\text{and}}\, [JX,\mathfrak g]\subseteq\mathfrak a_{l-1}\}, 
\quad l\geq 1.
\end{equation*}
It is easy to verify that the $\mathfrak a_l$ is an ideal of $\mathfrak g$ and a 
complex subspace of $\mathfrak g.$  Moreover, 
$\mathfrak a_l\subseteq  \mathfrak a_{l+1},$ 
for each $l\geq 0,$ and if $\mathfrak a_l = \mathfrak a_{l+1}$ for some 
$l\geq 0,$ then $\mathfrak a_r = \mathfrak a_l$ for all $r\geq l$ \cite{cfgu}.

\begin{defn}[\it{cf.}, \cite{cfgu}]
Let $G$ be a $2n$-dimensional simply-connected connected nilpotent Lie 
group with Lie algebra $\mathfrak g,$ and equipped with a left invariant 
integrable almost complex structure $J.$
\begin{itemize}
\item[ 1)] We shall say that $J$ is a nilpotent complex structure if 
$\mathfrak a_k = \mathfrak g$ for some $k > 0.$
\item[ 2)] Furthermore, if $J$ is a nilpotent, left-invariant complex structure on 
$G,$ and $\Gamma$  is a co-compact lattice of $G$, we shall say that the 
compact nilmanifold $\Gamma\backslash G$ with the complex structure induced 
by  $J$ has a nilpotent complex structure.
\end{itemize}
\end{defn}
We recall next the following characterization of the nilpotent complex structure:
\begin{thm}[Theorems 12 and 13, \cite{cfgu}]
\label{equiv-nil}
Let $G$ be a $2n$-dimensional simply-connected connected nilpotent Lie group 
with Lie algebra $\mathfrak g,$ and equipped with a left invariant integrable almost
complex structure $J.$ The complex structure $J$ is nilpotent if and only if there 
exists a (complex) basis $\{\alpha_i, \, i\leq i\leq n \}$ of left invariant forms of type 
$(1,0)$ such that the structure equations of $G$ are of the form 
\begin{equation}
\label{structure}
d \alpha_i=\sum_{j<k<i} A^i_{j,k}\, \alpha_j\wedge \alpha_k+
\sum_{j,k<i} B^i_{j,k}\, \alpha_j\wedge \bar \alpha_k, 
\quad i=1,\dots n, 
\end{equation}
where $A^i_{j,k}$ and $B^i_{j,k}$ are constants. Conversely, the structure equations 
(\ref{structure}) define a simply-connected connected nilpotent Lie group $G$ with 
nilpotent left invariant complex structure.
\end{thm}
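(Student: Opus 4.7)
The plan is to deduce both directions from the Chevalley--Eilenberg identity $d\alpha(X,Y) = -\alpha([X,Y])$ for left-invariant forms, translating the nilpotency of the ascending series $\{\mathfrak{a}_l\}$ into a triangular structure for the differentials of an appropriately ordered $(1,0)$-basis.

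For the forward direction, I would dualize the $J$-invariant filtration $\{\mathfrak{a}_l^{1,0}\}$ of $\mathfrak{g}^{1,0}$ to the decreasing filtration $W_l := \mathrm{ann}(\mathfrak{a}_l^{1,0}) \subset (\mathfrak{g}^{1,0})^*$, with $W_0 = (\mathfrak{g}^{1,0})^*$ and $W_k = 0$. The key point is that whenever $\alpha \in W_l$, $X \in \mathfrak{a}_{l+1}^{\CC}$ and $Y \in \mathfrak{g}^{\CC}$, the bracket $[X,Y]$ lies in $\mathfrak{a}_l^{\CC} = \mathfrak{a}_l^{1,0} \oplus \mathfrak{a}_l^{0,1}$, on which the $(1,0)$-form $\alpha$ vanishes; hence $d\alpha(X,Y) = 0$, so $d\alpha \in \Lambda^2(W_{l+1} \oplus \overline{W_{l+1}})$, and integrability of $J$ kills the $(0,2)$-component automatically. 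I would then pick a basis $\alpha_1, \ldots, \alpha_n$ of $(1,0)$-forms whose initial segments successively exhaust $W_{k-1}, W_{k-2}, \ldots, W_0$. For such an ordering, if $l$ is maximal with $\alpha_i \in W_l$, the inclusion above forces $d\alpha_i$ to involve only $\alpha_j, \bar\alpha_k$ with $j, k < i$, which is exactly (\ref{structure}).

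Conversely, assuming (\ref{structure}), I would let $\{Z_i\}$ be the dual basis of $\mathfrak{g}^{1,0}$ and read the brackets $[Z_i, Z_j]$ and $[Z_i, \bar Z_j]$ off (\ref{structure}) and its conjugate; every structure constant that appears carries an index $m > \max(i,j)$. Setting $U_s := \mathrm{span}_{\CC}(Z_{n-s+1}, \ldots, Z_n)$ and $\mathfrak{b}_s := (U_s \oplus \bar U_s) \cap \mathfrak{g}$, I would obtain $[\mathfrak{b}_s^{\CC}, \mathfrak{g}^{\CC}] \subset \mathfrak{b}_{s-1}^{\CC}$; $J$-invariance of $\mathfrak{b}_s$ is automatic since $U_s$ and $\bar U_s$ are the $\pm i$ eigenspaces of $J$. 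A short induction on $s$ then yields $\mathfrak{b}_s \subseteq \mathfrak{a}_s$, because for $X \in \mathfrak{b}_s$ both $X$ and $JX$ bracket $\mathfrak{g}$ into $\mathfrak{b}_{s-1} \subseteq \mathfrak{a}_{s-1}$, and hence $\mathfrak{g} = \mathfrak{b}_n \subseteq \mathfrak{a}_n$, showing $J$ is nilpotent. The final existence statement would come from the same bookkeeping in reverse: (\ref{structure}) defines a bracket on $\mathfrak{g}^{\CC}$, the Jacobi identity reduces to $d^2\alpha_i = 0$ and is verified inductively on $i$ thanks to the triangular form, complex conjugation gives a real Lie algebra, absence of $(0,2)$-components encodes the Newlander--Nirenberg integrability of $J$, and Malcev's theorem produces the simply-connected nilpotent group $G$.

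The main obstacle I expect is the careful alignment of the basis ordering with the successive subspaces $W_l$ in the forward direction, so that the strict inequality $j, k < i$ in (\ref{structure}) is genuinely achieved for every $i$; everything else reduces to a dualization of Lie-theoretic conditions via Chevalley--Eilenberg.
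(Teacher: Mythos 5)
The paper itself gives no proof of Theorem \ref{equiv-nil}: it is quoted verbatim from [CFGU, Theorems 12 and 13], so there is no internal argument to compare against. Your sketch is essentially the standard proof from that reference. The forward direction is sound: the ideals $\mathfrak a_l$ are $J$-invariant, so their annihilators split as $W_l\oplus\overline{W_l}$ with $W_l\subset(\mathfrak g^{1,0})^*$; the identity $d\alpha(X,Y)=-\alpha([X,Y])$ together with $[\mathfrak a_{l+1},\mathfrak g]\subseteq\mathfrak a_l$ gives $d\alpha\in\Lambda^2\bigl(W_{l+1}\oplus\overline{W_{l+1}}\bigr)$ for $\alpha\in W_l$, integrability removes the $(0,2)$-part, and a basis adapted to the flag $0=W_k\subseteq W_{k-1}\subseteq\cdots\subseteq W_0=(\mathfrak g^{1,0})^*$ yields exactly the strict inequalities $j,k<i$ in (\ref{structure}). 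The converse via the series $\mathfrak b_s$ and the induction $\mathfrak b_s\subseteq\mathfrak a_s$ is also correct.

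Two points need repair. First, your claim that the Jacobi identity ``reduces to $d^2\alpha_i=0$ and is verified inductively on $i$ thanks to the triangular form'' is false: $d^2\alpha_i=0$ is an additional constraint on the constants $A^i_{j,k},B^i_{j,k}$, not a consequence of the triangular shape. For instance, with $n=4$, $d\alpha_1=d\alpha_2=0$, $d\alpha_3=\alpha_1\wedge\bar\alpha_2$ and $d\alpha_4=\alpha_2\wedge\alpha_3$, one computes $d^2\alpha_4=-\alpha_2\wedge\alpha_1\wedge\bar\alpha_2\neq0$. The last sentence of the theorem must therefore be read with the hypothesis $d^2=0$ understood (as it is, implicitly, in the statement quoted here and in [CFGU]); your proof should assume it rather than derive it. Second, Mal\v cev's theorem concerns the existence of lattices when the structure constants are rational; what produces the simply-connected nilpotent group from the Lie algebra is Lie's third theorem (concretely, the Baker--Campbell--Hausdorff group law on $\mathfrak g$). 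Neither issue affects the equivalence between nilpotency of $J$ and the triangular structure equations, which you establish correctly.
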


\begin{rmk}
\label{para}
The complex parallelizable nilmanifolds are precisely those nilmanifolds with nilpotent 
complex structure for which the coefficients $B^i_{jk}$ in (\ref{structure}) vanish, in which 
case all of the forms $\alpha_i,\, i=1,\dots,n$ are holomorphic.
\end{rmk}

The following result exhibits astheno-K\"ahler metrics on compact nilmanifolds equipped 
with a nilpotent complex structure, generalizing  the construction in \cite[Theorem 2.7]{ft2} 
to arbitrary dimension.

\begin{thm}
\label{ex-nilp} 
Let $G$ be the simply-connected nilpotent Lie group with nilpotent complex structure given 
by the $(1,0)$-forms $\alpha_i, i=1,\dots, n,$ satisfying the structure equations:
\begin{equation*}
\label{neq}
\begin{cases}
d\alpha_i = 0, i=1,\dots, n-1,\\
d\alpha_n = \sum_{i<j<n} A_{i,j}\, \alpha_i\wedge \alpha_j+
\sum_{k,l<n} B_{i,j}\, \alpha_k\wedge \bar \alpha_l,
\end{cases}
\end{equation*}
where $A_{ij}, B_{k,l}\in \CC.$ Then $G$ carries an invariant astheno-K\"ahler metric if 
$$
\sum_{i<j<n}|A_{ij}|^2+\sum_{i,j<n, i\neq j}|B_{ij}|^2+
2\mathfrak{Re}(\sum_{i<j<n}B_{ii}\bar B_{jj})=0.
$$
Furthermore, for every $n\geq 3,$ there exist compact nilmanifolds with nilpotent complex 
structures of complex dimension $n$ carrying an invariant astheno-K\"ahler metric.
\end{thm}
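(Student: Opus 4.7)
The plan is to work with the diagonal left-invariant Hermitian metric
\[\omega=\tfrac{i}{2}\sum_{k=1}^{n}\alpha_k\wedge\bar\alpha_k\]
and reduce the astheno-K\"ahler equation $\partial\bar\partial\omega^{n-2}=0$ to a polynomial identity in the structure constants. First, I would split $\omega=\omega_0+\eta$ with $\omega_0=\tfrac{i}{2}\sum_{k<n}\alpha_k\wedge\bar\alpha_k$ and $\eta=\tfrac{i}{2}\alpha_n\wedge\bar\alpha_n$. Since $d\alpha_k=d\bar\alpha_k=0$ for every $k<n$, the form $\omega_0$ is $d$-closed, while $\eta\wedge\eta=0$. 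The binomial expansion gives $\omega^{n-2}=\omega_0^{n-2}+(n-2)\,\omega_0^{n-3}\wedge\eta$, and $d\omega_0=0$ reduces the astheno-K\"ahler condition to $\omega_0^{n-3}\wedge\partial\bar\partial\eta=0$.

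Next I would compute $\partial\bar\partial\eta$. From the structure equations one reads off $\partial\alpha_n=\sum_{i<j<n}A_{ij}\alpha_i\wedge\alpha_j$ and $\bar\partial\alpha_n=\sum_{k,l<n}B_{kl}\alpha_k\wedge\bar\alpha_l$, with conjugate expressions for $\bar\alpha_n$. Since $\alpha_k,\bar\alpha_k$ are $d$-closed for $k<n$, a direct check using $d^2=0$ yields $\partial\bar\partial\alpha_n=\partial\bar\partial\bar\alpha_n=0$, and applying the Leibniz rule to $\eta$ gives
\[\partial\bar\partial\eta=\tfrac{i}{2}\bigl(\bar\partial\alpha_n\wedge\partial\bar\alpha_n-\partial\alpha_n\wedge\bar\partial\bar\alpha_n\bigr),\]
a $(2,2)$-form in $\alpha_1,\dots,\alpha_{n-1}$ and their conjugates.

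The essential step is to wedge this $(2,2)$-form with $\omega_0^{n-3}$ on the $(n-1)$-dimensional complex vector space with orthonormal coframe $\{\alpha_k\}_{k<n}$. Invoking the classical Lefschetz-type identities — for two $(1,1)$-forms $\phi_1,\phi_2$, the top form $\phi_1\wedge\phi_2\wedge\omega_0^{n-3}$ is proportional to $\bigl(\mathrm{tr}(\phi_1\phi_2)-\mathrm{tr}(\phi_1)\,\mathrm{tr}(\phi_2)\bigr)\omega_0^{n-1}$, and for a $(2,0)$-form $\psi=\sum\psi_{ij}\alpha_i\wedge\alpha_j$ the product $\psi\wedge\bar\psi\wedge\omega_0^{n-3}$ is proportional, with the same constant, to $\sum_{i<j<n}|\psi_{ij}|^2\,\omega_0^{n-1}$ — and combining the two contributions, one shows that $\omega_0^{n-3}\wedge\partial\bar\partial\eta$ is a nonzero multiple of
\[\Bigl[\sum_{i<j<n}|A_{ij}|^2+\sum_{i,j<n,\ i\neq j}|B_{ij}|^2+2\mathfrak{Re}\!\!\sum_{i<j<n}B_{ii}\bar B_{jj}\Bigr]\omega_0^{n-1}.\]
Its vanishing is therefore equivalent to the stated algebraic identity.

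For the existence statement in every $n\geq 3$, it suffices to exhibit rational constants satisfying the identity. Take all $A_{ij}=0$ and set $B_{11}=1$, $B_{22}=-1$, $B_{12}=B_{21}=1$, with all remaining $B_{kl}$ equal to zero; for $n\geq 4$ extend by $d\alpha_k=0$ for $3\leq k\leq n-1$. One computes $0+(|B_{12}|^2+|B_{21}|^2)+2\mathfrak{Re}(B_{11}\bar B_{22})=0+2-2=0$. Since $d\alpha_n$ is a polynomial in $d$-closed forms, $d^2=0$ is automatic and the structure equations define a $2$-step nilpotent rational Lie algebra $\mathfrak g$ with $[\mathfrak g,\mathfrak g]\subset Z(\mathfrak g)$; the induced complex structure is nilpotent (indeed $\mathfrak a_2=\mathfrak g$). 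Malcev's criterion then produces a co-compact lattice $\Gamma\subset G$, and $\Gamma\backslash G$ is the required compact nilmanifold. The main obstacle throughout is the third paragraph: the careful combinatorial bookkeeping needed to identify $\omega_0^{n-3}\wedge(\bar\partial\alpha_n\wedge\partial\bar\alpha_n-\partial\alpha_n\wedge\bar\partial\bar\alpha_n)$ with the stated quadratic form — in particular, matching the cross-terms $\mathfrak{Re}(B_{ii}\bar B_{jj})$ with the ``trace-times-trace'' contribution from $\bar\partial\alpha_n\wedge\partial\bar\alpha_n$ — while every other step is structural.
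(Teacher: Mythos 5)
Your proposal follows essentially the same route as the paper: the same diagonal invariant metric, the same splitting $\omega=\tfrac{i}{2}\left(\omega_0+\alpha_n\wedge\bar\alpha_n\right)$ with binomial expansion, the same reduction to computing $\omega_0^{n-3}\wedge\left(\bar\partial\alpha_n\wedge\partial\bar\alpha_n-\partial\alpha_n\wedge\bar\partial\bar\alpha_n\right)$ and identifying it with the stated quadratic form in the structure constants, and the same appeal to Mal\v cev's theorem to produce a co-compact lattice. Your explicit rational choice $A_{ij}=0$, $B_{11}=1$, $B_{22}=-1$, $B_{12}=B_{21}=1$ does satisfy the displayed condition, so the existence claim for every $n\geq 3$ goes through exactly as in the paper.
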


\begin{proof} As in \cite[Theorem 2.7]{ft2}, it is enough to find sufficient conditions satisfied 
by the coefficients on the structure equations (\ref{structure}) such that the diagonal metric 
$$
g=\frac12\sum_{i=1}^n \alpha_i\otimes \bar \alpha_i 
+\bar \alpha_i\otimes \alpha_i 
$$
is astheno-K\"ahler.  In other words, we will require that 
$\ddbar \omega^{n-2}=0,$ where 
$$
\omega=\frac{i}{2}\sum_{i=1}^n\alpha_i\wedge\bar \alpha_i
$$ 
is the fundamental form of the metric $g.$ To adapt to 
the structure equations, it is convenient to write $\omega$ as
$$
\omega=\frac{i}{2}\left( \omega_0+\alpha_n\wedge \bar \alpha_n\right),
$$ 
where 
$\displaystyle \omega_0=\sum_{i=1}^{n-1} 
\alpha_i\wedge\bar \alpha_i.$ 
An immediate computation shows that 
\begin{equation}
\label{power}
\omega^{n-2}=\left(\frac{i}{2}\right)^{n-2}\left( \omega_0^{n-2}+
(n-2)\omega_0^{n-3}\wedge \alpha_n\wedge \bar \alpha_n\right),
\end{equation}
while, for every $p>0$
$$
\omega_0^p= \binom{n-1}{p}\sum_{i_1<\cdots<i_p<n}
\alpha_{i_1\bar i_1\cdots i_p\bar i_p},
$$
where $\alpha_{i_1\bar i_1\cdots i_p\bar i_p}$ is a short 
notation for $\alpha_{i_1}\wedge \bar \alpha_{i_1}
\wedge\cdots\wedge \alpha_{i_p}\wedge \bar\alpha_{i_p},$ 
a notation which we will adopt henceforth.

Notice now from the structure equations (\ref{neq}) that 
$\partial \alpha_{i}=\bar\partial \alpha_i=
\partial\bar \alpha_{i}=\bar\partial \bar \alpha_i=0$ for 
every $1=1,\dots, n-1,$ while 
\begin{equation}
\label{an}
\partial \alpha_n=\sum_{i<j<n}A_{ij}\alpha_{ij}\quad{\text{and}}\quad 
\bar\partial \alpha_n= \sum_{i,j<n}B_{ij}\alpha_{i\bar j}.
\end{equation}
Hence, we have  $\ddbar \alpha_n=\ddbar\bar \alpha_n=0,$ and  
$\partial \omega_0^k=\bar\partial \omega_0^k=0$ for every $k>0.$ In 
particular, from (\ref{power}), we see now that 
\begin{align*}
\ddbar \omega^{n-2}=&\, \left(\frac{i}{2}\right)^{n-2}(n-2)
\omega_0^{n-3}\wedge \ddbar \alpha_{n\bar n}\\
=&\, c_n \left(\sum_{i_1<\cdots<i_{n-3}<n} 
\alpha_{i_1\bar i_1\cdots i_p\bar i_{n-3}}\right)\wedge 
\left( \bar\partial \alpha_n\wedge \partial \bar \alpha_n - 
\partial \alpha_n\wedge \bar\partial \bar \alpha_n\right)\\
=&\, c_n \left(\sum_{i<j<n}|A_{ij}|^2+\sum_{i,j<n, i\neq j}|B_{ij}|^2
+2\mathfrak{Re}(\sum_{i<j<n}B_{ii}\bar B_{jj})\right)
\alpha_{1\bar 1\cdots (n-1)\overline{(n-1)}}.
\end{align*}
where $c_n$ is a non-zero constant depending on $n.$

Imposing now the condition $\ddbar \omega^{n-2}=0$ and using (\ref{an}), 
we find that the metric $g$ is astheno-K\"ahler if and only if 
\begin{equation}
\label{ast-cond}
\sum_{i<j<n}|A_{ij}|^2+\sum_{i,j<n, i\neq j}|B_{ij}|^2+
2\mathfrak{Re}(\sum_{i<j<n}B_{ii}\bar B_{jj})=0.
\end{equation}

The conclusion of the theorem follows by noticing that (\ref{ast-cond}) 
admits solutions  with $A_{ij}, B_{ij}\in \QQ[i],$ for example one can take  
$A_{ij}=1,$ for all $1\leq i<j<n,\, B_{ij}=\frac12$ for all $1\leq i\neq j<n,$ 
and $B_{ii}=i.$ In this case, it is guaranteed by Mal\v cev's theorem \cite{malcev} 
that we can find a maximal rank lattice $\Gamma\subset G$ such that 
$M =\Gamma\backslash G$  is a compact nilmanifold with nilpotent complex 
structure. The induced metric will automatically be astheno-K\"ahler.
\end{proof}

%%%%%%%%%%%%%%%%%%%%%%%%%%%%%%%%%%%%%%
%%%%%%%%%%%%%%%%%%%%%%%%%%%%%%%%%%%%%%
%%%%%%%%%%%%%%%%%%%%%%%%%%%%%%%%%%%%%%

\subsection{Other constructions of astheno-K\"ahler metrics}
\label{other-ak}

For convenience, we will mention a few other sources of astheno-K\"ahler 
manifolds. Products of Sasakian manifolds carry astheno-K\"ahler metrics, 
as observed by Matsuo \cite{matsuo}. In particular, the Calabi-Eckmann 
manifolds carry such metrics. This method was generalized by Fino, Grantcharov 
and Vezzoni. In \cite{fgv}, who constructed  astheno-K\"ahler metrics on 
torus bundles over a K\"ahler base. This technique recovers some of the known 
astheno-K\"ahler metrics on  nilmanifolds with nilpotent complex structure, 
Matsuo's metrics, and also produces non-homogeneous ones. We will not 
add the details, as we will only briefly mention the Calabi-Eckmann case. 
For $n > 3$ other examples of astheno-K\"ahler manifolds have been found 
by Fino and Tomassini \cite{ft2} via the twist construction \cite{swann}.

\smallskip

Finally, according to Fino and Tomassini \cite{ft1,ft2}, starting from previously 
known examples of SKT and astheno-K\"ahler manifolds one can construct 
new ones:

\begin{thm}[Fino, Tomassini] 
\label{fino-tomassini}
Let $M$ be a complex manifold, $Y\subset M$ is  a compact complex submanifold, 
and $\widetilde M_Y$ the blowing-up  of $M$ along $Y.$
\begin{itemize} 
\item[ 1)] If $M$ admits a pluriclosed metric then $\widetilde M_Y$  admits 
a pluriclosed metric.
\item[ 2)] If $M$ admits an astheno-K\"ahler metric $\omega$ such that
\begin{equation}
\label{ft-condition}
\ddbar \omega=0\quad{\text{and}}\quad \ddbar\omega^2=0,
\end{equation}
then $\widetilde M_Y$  admits an astheno-K\"ahler metric satisfying (\ref{ft-condition}), 
too.
\end{itemize}
\end{thm}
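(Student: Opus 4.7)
The plan is to define a Hermitian metric on $\widetilde M_Y$ of the form
$$
\tilde\omega := \pi^*\omega + \varepsilon\,\theta,
$$
where $\pi:\widetilde M_Y\to M$ is the blow-down, $\theta$ is a smooth, $d$-closed, real $(1,1)$-form that is positive in the fiber directions of the exceptional divisor $E=\pi^{-1}(Y)$, and $\varepsilon>0$ is sufficiently small. Following the classical K\"ahler blow-up construction, I would build $\theta$ as the first Chern form of a Hermitian metric on the line bundle $\OO(-E)$ chosen so that its curvature restricts positively to every $\PP^{r-1}$-fiber of $E\to Y$, where $r=\mathrm{codim}_M Y$. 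Then $\pi^*\omega$ is positive on $\widetilde M_Y\setminus E$ and semi-positive along $E$, degenerating precisely in the fiber directions, while $\varepsilon\theta$ supplies positivity there; for $\varepsilon$ small this gives $\tilde\omega>0$ globally.

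Part (1) follows at once: since $\theta$ is closed, $\ddbar\theta=0$, and
$$
\ddbar\tilde\omega = \pi^*(\ddbar\omega) + \varepsilon\,\ddbar\theta = 0,
$$
whenever $\omega$ is pluriclosed.

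For part (2), the key preliminary observation is that condition (\ref{ft-condition}) implies $\ddbar\omega^k=0$ for every $k\geq 0$. Indeed, a direct computation using $\ddbar\omega=0$ yields
$$
\ddbar\omega^k = k(k-1)\,\omega^{k-2}\wedge\partial\omega\wedge\bar\partial\omega\quad(k\geq 2),
$$
so the case $k=2$ combined with $\ddbar\omega^2=0$ forces $\partial\omega\wedge\bar\partial\omega=0$, after which every higher power vanishes. Expanding
$$
\tilde\omega^{n-2} = \sum_{k=0}^{n-2}\binom{n-2}{k}\varepsilon^k(\pi^*\omega)^{n-2-k}\wedge\theta^k
$$
and using that $\theta$ is $d$-closed, each summand contributes
$$
\ddbar\bigl((\pi^*\omega)^{n-2-k}\wedge\theta^k\bigr) = \pi^*\bigl(\ddbar\omega^{n-2-k}\bigr)\wedge\theta^k = 0,
$$
so $\ddbar\tilde\omega^{n-2}=0$, i.e.\ $\tilde\omega$ is astheno-K\"ahler. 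Applying the same expansion to $\tilde\omega$ and $\tilde\omega^2$ yields $\ddbar\tilde\omega=0$ and $\ddbar\tilde\omega^2=0$, so $\tilde\omega$ satisfies (\ref{ft-condition}) as well.

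The one genuinely delicate step is the construction of the metric on $\OO(-E)$ with the required fiber-positivity near $E$; this is the classical ingredient going back to the K\"ahler blow-up construction. Once $\theta$ is in hand, propagating the closure conditions to $\widetilde M_Y$ is a formal computation whose whole point is that (\ref{ft-condition}) kills $\partial\omega\wedge\bar\partial\omega$, hence kills $\ddbar$ on every power of $\omega$, while $\theta$ being closed lets it pass freely through $\ddbar$.
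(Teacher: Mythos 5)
The paper states this theorem as a quoted result of Fino--Tomassini \cite{ft1,ft2} and gives no proof of its own, so there is nothing internal to compare against; your argument is correct and is essentially the original Fino--Tomassini construction: take $\pi^*\omega+\varepsilon\theta$ with $\theta$ the closed $(1,1)$-form coming from a fiberwise-positive Hermitian metric on $\OO(-E)$ (made trivial away from $E$, which is where compactness of $Y$ enters), and observe that $\ddbar\omega=\ddbar\omega^2=0$ forces $\partial\omega\wedge\bar\partial\omega=0$ and hence $\ddbar\omega^k=0$ for all $k$, after which the binomial expansion closes the argument.
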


%%%%%%%%%%%%%%%%%%%%%%%%%%%%%%%%%%%%%%
%%%%%%%%%%%%%%%%%%%%%%%%%%%%%%%%%%%%%%
%%%%%%%%%%%%%%%%%%%%%%%%%%%%%%%%%%%%%%

\subsection{Known obstructions to the existence of astheno-K\"ahler metrics} 
\label{old-obs-ak}

The presence of astheno-K\"ahler metric was noticed to force strong conditions 
on holomorphic $1$-forms by Jost and Yau.
 
 \begin{lemma}[Jost, Yau \cite{jy}]
\label{jy}
Let $X$ be a compact astheno-K\"ahler manifold. Then every holomorphic 
$1$-form on $X$ is closed.
\end{lemma}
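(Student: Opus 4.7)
The plan is to show that for any holomorphic $1$-form $\alpha$ on the compact astheno-K\"ahler manifold $(X,\omega)$ of complex dimension $n$, the non-negative quantity
\[
I=\int_X \partial\alpha\wedge\overline{\partial\alpha}\wedge\omega^{n-2}
\]
vanishes. Since $\partial\alpha$ is of type $(2,0)$, a standard pointwise identity shows that the integrand equals a positive multiple of $|\partial\alpha|^2_\omega\,dV_\omega$, so $I=0$ forces $\partial\alpha=0$ and hence $d\alpha=\partial\alpha+\bar\partial\alpha=0$.

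To evaluate $I$, I would exploit the holomorphicity of $\alpha$: $\bar\partial\alpha=0$ and $\partial\bar\alpha=0$, so $d\alpha=\partial\alpha$ and $d\bar\alpha=\bar\partial\bar\alpha$. A first application of Stokes' theorem to $\alpha\wedge d\bar\alpha\wedge\omega^{n-2}$ transfers the exterior derivative onto $\omega^{n-2}$, giving
\[
\int_X d\alpha\wedge d\bar\alpha\wedge\omega^{n-2}=\int_X \alpha\wedge d\bar\alpha\wedge d\omega^{n-2}.
\]
A bidegree check selects the component $\partial\omega^{n-2}$ from $d\omega^{n-2}$, reducing the right-hand side to $\int_X\alpha\wedge\bar\partial\bar\alpha\wedge\partial\omega^{n-2}$.

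In the second step, I would apply Stokes once more, this time to the $(2n-1)$-form $\alpha\wedge\bar\alpha\wedge\partial\omega^{n-2}$. Its bidegree is $(n,n-1)$, so $d$ acts on it as $\bar\partial$. Expanding $\bar\partial(\alpha\wedge\bar\alpha\wedge\partial\omega^{n-2})$ by the Leibniz rule and using $\bar\partial\alpha=0$ produces, up to sign, the desired integrand together with a correction term $\alpha\wedge\bar\alpha\wedge\bar\partial\partial\omega^{n-2}$. The astheno-K\"ahler condition $\ddbar\omega^{n-2}=0$ enters precisely here to annihilate this correction, and Stokes then yields $\int_X\alpha\wedge\bar\partial\bar\alpha\wedge\partial\omega^{n-2}=0$, completing the chain of equalities that forces $I=0$.

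The only technical obstacle is the careful tracking of signs and bidegrees across the two integrations by parts; beyond Stokes' theorem, no analytic input is needed. The hypothesis is used at exactly one place, in the second integration by parts, which clarifies why the full identity $\ddbar\omega^{n-2}=0$, rather than any weaker closedness condition on $\omega^{n-2}$, is what is required for this obstruction.
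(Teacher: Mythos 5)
Your proposal is correct and is essentially the paper's own argument: the paper compresses your two Stokes applications into the single chain $0\leq \int \partial\phi\wedge\bar\partial\bar\phi\wedge\omega^{n-2}=\int \phi\wedge\bar\phi\wedge\partial\bar\partial\omega^{n-2}=0$, using positivity of the pairing of the primitive $(2,0)$-form $\partial\phi$ against $\omega^{n-2}$ exactly as you do. Your expanded bookkeeping of bidegrees and of where the astheno-K\"ahler hypothesis enters is just a more detailed writing of the same proof.
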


\begin{proof} Let $\omega$ satisfy the conditions of the definition. 
Let $\phi$ be a holomorphic $1$-form,
i.e., $\bar\partial \phi=0.$ Then
$$
0\leq \int \partial \phi\wedge \bar \partial \bar\phi \wedge \omega^{n-2} = 
\int  \phi\wedge \bar\phi \wedge\partial\bar \partial \omega^{n-2}=0.
$$
and this implies $\partial \phi=0$.
\end{proof}

A more general obstruction to the existence of astheno-K\"ahler metrics first 
noticed in \cite{lyz} (see also \cite[Corollary 2.2]{fgv}) is given by the Harvey-Lawson 
type criterion:

\begin{thm}
\label{hl-obs}
If a compact complex manifold $M$ admits a weakly positive and 
$\partial\bar\partial$-exact and non-vanishing $(2, 2)$-current, then it does 
not admit an astheno-K\"ahler metric.
\end{thm}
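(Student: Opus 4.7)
The plan is to argue by contradiction. Assume $M$ supports both an astheno-K\"ahler metric $\omega$, so that $\ddbar \omega^{n-2}=0$, and a weakly positive, $\ddbar$-exact, non-vanishing $(2,2)$-current $T$. Writing $T = \ddbar S$ for some $(1,1)$-current $S$, the strategy is to evaluate the pairing $\int_M T \wedge \omega^{n-2}$ in two ways and derive a contradiction.

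First I would pair $T$ against the smooth strongly positive $(n-2,n-2)$-form $\omega^{n-2}$. Weak positivity of $T$ combined with strong positivity of $\omega^{n-2}$ ensures that $T \wedge \omega^{n-2}$ defines a non-negative Borel measure on $M$; in particular, its total mass is non-negative and vanishes only if the measure itself is identically zero.

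The second step is to transfer the $\ddbar$ from $T$ onto $\omega^{n-2}$ via the duality that defines derivatives of currents. Since $\omega^{n-2}$ is smooth, this yields
\[
\int_M T \wedge \omega^{n-2} = \int_M S \wedge \ddbar \omega^{n-2} = 0,
\]
where the final equality is precisely the astheno-K\"ahler condition. Combining the two steps, the non-negative measure $T \wedge \omega^{n-2}$ has total mass zero and hence vanishes identically on $M$. Since $\omega^{n-2}$ is pointwise strictly positive as a power of a Hermitian fundamental form, this forces $T = 0$, contradicting the non-vanishing assumption.

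The main technical point is the last implication: justifying that $T \wedge \omega^{n-2} \equiv 0$ forces $T=0$ for a weakly positive $(2,2)$-current. This reduces to the pointwise linear-algebraic statement that a weakly positive $(2,2)$-form on a Hermitian vector space whose trace against $\omega^{n-2}$ vanishes must itself be zero, which one verifies by expressing the form in a unitary frame diagonalizing $\omega$ and using the positivity constraints on its coefficients. Apart from this and the routine bookkeeping of signs in the integration by parts, the argument is essentially formal.
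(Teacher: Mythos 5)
Your proof is correct and follows essentially the same route as the paper's: pair the $\partial\bar\partial$-exact current with $\omega^{n-2}$, integrate by parts onto $\omega^{n-2}$, and invoke the astheno-K\"ahler condition to get a contradiction. The only difference is presentational: you spell out (via the trace measure and the pointwise cone argument) why a non-vanishing weakly positive $(2,2)$-current pairs \emph{strictly} positively with the strongly positive form $\omega^{n-2}$, a fact the paper simply asserts in the strict inequality $0<\int_M i\partial\bar\partial T\wedge\omega^{n-2}$.
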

\proof Suppose $i\partial\bar\partial T$  is weakly positive, where we consider 
$T$ as a form with distribution coefficients. Then for an astheno-K\"ahler 
metric $\omega$ we have by integration by parts
$$
0 <\int_M i \partial\bar\partial T\wedge \omega^{n-2} =
\int_M T\wedge i \partial\bar\partial \omega^{n-2} = 0,
$$
which gives a contradiction.
\qed
\begin{rmk}
Note that for a holomorphic one-form $\alpha,$ the form 
$i(\partial \alpha \wedge \bar\partial \bar \alpha) = 
i\partial\bar\partial(\alpha\wedge \bar \alpha)$ 
is weakly positive, which leads to the obstruction of Jost and Yau in Lemma \ref{jy}. 
\end{rmk}
\begin{rmk}
Furthermore, we remark that the statement of Corollary \ref{hl-obs} cannot be 
reversed, since in general not every positive ($n-2, n-2)$-form arises as 
$(n-2)$-power of a positive $(1,1)$-form. 
\end{rmk}

%%%%%%%%%%%%%%%%%%%%%%%%%%%%%%%%%%%%%%%%%%%%%%%%%%%
%%%%%%%%%%%%%%%%%%%%%%%%%%%%%%%%%%%%%%%%%%%%%%%%%%%
%%%%%%%%%%%%%%%%%%%%%%%%%%%%%%%%%%%%%%%%%%%%%%%%%%%
%%%%%%%%%%%%%%%%%%%%%%%%%%%%%%%%%%%%%%%%%%%%%%%%%%%

 \section{Bott-Chern and Aeppli cohomologies} 
 \label{BCcoh}

Let $X$ be a compact complex manifold of dimension $n,$ and denote 
by  ${\mathcal A}^{p,q}(X)$ its space of smooth $(p,q)$-forms. The 
Bott-Chern cohomology groups are 
\begin{equation*}
H^{p,q}_{BC}(X,{\mathbb C})=
\frac{\{\alpha\in {\mathcal A}^{p,q}(X)\, \vert\, d\alpha=0\}}
{\{i\partial\bar\partial\beta\, \vert\,  \beta\in {\mathcal A}^{p-1,q-1}(X)\}},
\end{equation*}
while the Aeppli cohomology groups are
\begin{equation*}
H^{p,q}_A(X,{\mathbb C})=
\frac{\{\alpha\in {\mathcal A}^{p,q}(X)\, \vert\, i\partial\bar\partial \alpha=0\}}
{\{\partial\beta+\bar\partial\gamma\, \vert\, \beta\in {\mathcal A}^{p-1,q}(X),
\gamma\in {\mathcal A}^{p,q-1}(X)\}}.
\end{equation*} 

\smallskip

The Bott-Chern and Aeppli cohomologies satisfy a remarkable symmetry
property \cite{sch}: 
\begin{equation}
\label{symmetry}
H^{p,q}_{\#}(X)=\overline{H^{q,p}_{\#}(X)},
\end{equation}
where $\#\in \{BC, A\}.$ Furthermore, the groups 
$H^{p,q}_{BC}(X, {\mathbb C})$ and 
$H^{n-p,n-q}_A(X, {\mathbb C})$ are dual via the pairing
\begin{equation} 
H^{p,q}_{BC}(X, {\mathbb C})\times H_A^{n-p,n-q}
(X, {\mathbb C})\to {\mathbb C}, 
([\alpha ],\{\beta\})\to 
\int_X\alpha\wedge\beta.
\end{equation}

We will denote by $[\eta]$ the class of a $d$-closed $(p,q)$-form $\eta$ in 
$H^{p,q}_{BC}(X)$ and by $\{\zeta\}$ the class of a $i\partial\bar\partial$-closed 
$(p,q)$-form $\zeta$ in $H^{p,q}_A(X)$. Notice that the elements in $H^{0,1}_{BC}(X)$ 
are the $d$-closed $(0,1)$-forms on $X.$

%%%%%%%%%%%%%%%%%%%%%%%%%%%%%%%%%%%%%%%%%%
%%%%%%%%%%%%%%%%%%%%%%%%%%%%%%%%%%%%%%%%%%
%%%%%%%%%%%%%%%%%%%%%%%%%%%%%%%%%%%%%%%%%%

\subsection{A (very) weak K\"unneth formula}

Let $X$ and $Y$ two compact complex manifolds of dimensions $\dim X=n$ 
and $\dim Y=m.$ Let 
$$
p: X\times Y\to X\quad{\text{and}}\quad  q:X\times Y\to Y
$$ 
be two natural projections.

\begin{proof}[Proof of Theorem \ref{weak-kunneth}]

For the statement regarding the Bott-Chern cohomology, it suffices to show 
that the natural map 
$$
{\mathfrak s}_{BC}:H^{0,1}_{BC}(X)\oplus H^{0,1}_{BC}(Y)\to H^{0,1}_{BC}(X\times Y), 
\quad (\alpha,\beta)\mapsto p^*\alpha+q^*\beta
$$
is an isomorphism.

\smallskip

Let $\alpha\in  H^{0,1}_{BC}(X)$ and $\beta\in H^{0,1}_{BC}(Y)$ such 
that  $(\alpha,\beta)\in \ker ({\mathfrak s}^{0,1}_{BC}).$ That means 
$\alpha\in \mathcal A^{0,1}(X)$ and $\beta \in \mathcal A^{0,1}(Y)$ are 
$d$-closed forms in $X$ and $Y,$ respectively, and $p^*\alpha+q^*\beta=0$ in 
$\mathcal A^{0,1}(X\times Y)$. The latter automatically implies $\alpha=0$ 
and $\beta=0,$ and the injectivity of ${\mathfrak s}^{0,1}_{BC}$ is proven. 
To show its surjectivity,  let $\eta\in H^{0,1}_{BC}(X\times Y)$. We have
$\partial \eta=\bar \partial \eta=0,$ which imply $\bar\partial \bar \eta=0.$
Consider now the class $[\overline\eta]\in H_{\bar\partial}^{1,0}(X\times Y)$ in 
the Dolbeault cohomology of $X\times Y.$ From the K\"unneth formula for the 
Dolbeault cohomology \cite[page 105]{gh}, it follows that there exist 
$\alpha\in \mathcal A^{0,1}(X)$ and $\beta \in \mathcal A^{0,1}(Y)$
such that $\partial\alpha=0$, $\partial\beta=0$ and such that 
\begin{equation}
\label{BC-surjectivity}
\eta=p^*\alpha+q^*\beta.
\end{equation} 
Since $\eta$ is $d$-closed, it follows that 
$\bar\partial\eta=p^*\bar\partial\alpha+q^*\bar\partial\beta$ 
and if we consider local coordinates on $X$ and $Y$, the above equality implies 
that $\bar\partial\alpha=0$ and $\bar\partial\beta=0$.  Therefore $d\alpha=0$ and 
$d\beta=0,$ which together with (\ref{BC-surjectivity}) implies that the map 
${\mathfrak s}_{BC}$ is surjective, as well.

\smallskip

For the statement regarding the Aeppli cohomology of  $X\times Y,$ fix Hermitian metrics 
$\omega_X$  and $\omega_Y$ on $X$ and  $Y,$ respectively and consider the set 
$$
{\mathcal H}_A^{0,1}(X)=\{\alpha\in {\mathcal A}^{0,1}(X)\, \vert\, 
\partial\bar\partial\alpha=0, \bar\partial^*\alpha=0\},
$$ 
where $\bar\partial^*$ is the adjoint of $\bar\partial$ is considered with respect 
to $\omega_X$. It is known from \cite{sch} that this group is isomorphic to the 
Aeppli group $H^{0,1}_A(X).$  We also consider similarly defined sets 
${\mathcal H}_A^{0,1}(Y)$ and ${\mathcal H}_A^{0,1}(X\times Y),$ where we 
equip $X\times Y$ with the product Hermitian metric $p^*\omega_X+q^*\omega_Y.$ 
We define the map 
$$
{\mathfrak t}_A: {\mathcal H}_A^{0,1}(X)\oplus {\mathcal H}_A^{0,1}(Y)
\to {\mathcal A}^{0,1}(X\times Y), \quad (\alpha,\beta)\mapsto p^*\alpha+q^*\beta.
$$ 
This map is clearly injective. In order to prove the inequality 
$$
h_A^{0,1}(X)+h_A^{0,1}(Y)\leq h_A^{0,1}(X\times Y),
$$ 
it is enough to show that the range of ${\mathfrak t}_A$ is included in 
${\mathcal H}_A^{0,1}(X\times Y).$ That is equivalent to showing that if $\alpha$ 
and $\beta$ are harmonic, then $\eta:=p^*\alpha+q^*\beta$ is harmonic.\, i.e., 
$\eta$ satisfies $\partial \bar\partial\eta=\bar\partial^*\eta=0$ provided 
$\partial \bar\partial\alpha=\bar\partial^*\alpha=0$ and 
$\partial \bar\partial\beta=\bar\partial^*\beta=0.$ It is clear that $\partial\bar\partial\eta=0.$ 
and we will  show next that $\bar\partial^*\eta=0.$ Before we proceed, we recall 
a few standard results in complex geometry.

\smallskip

Let $\star$ be the Hodge operator acting on the space of $(0,1)$-forms on a compact 
complex manifold of complex dimension $p,$ equipped with a Hermitian metric with 
fundamental for $\omega.$ Then the adjoint operator $\bar\partial^*$ acting on $(0,1)$-forms 
is given by $\displaystyle \bar\partial^*=-\star\partial\star.$ Also, according to 
\cite[Proposition 6.29, p.150]{voisin}, for $(0,1)$-forms we have  
\begin{equation}
\star \gamma=\frac{i}{(p-1)!}\omega^{p-1}\wedge\gamma.
\end{equation}
Therefore, a $(0,1)$-form $\gamma$ satisfies $\bar\partial^*\gamma=0$ if and only if 
$\partial (\gamma \wedge \omega^{p-1})=0.$

\smallskip

We will prove now that $\bar\partial^* p^*\alpha=0.$  By the above consideration, 
it suffices to show that 
\begin{equation*}
\partial \left(p^*\alpha\wedge (p^*\omega_X+q^*\omega_Y)^{m+n-1}\right)=0,
\end{equation*}
which for degree reasons is equivalent to proving 
\begin{equation*}
\partial \left(p^*\alpha\wedge p^*\omega_X^{n-1}\wedge q^*\omega_Y^{m}\right)=0,
\end{equation*}

As $\alpha$ is harmonic, we know that $\partial(\alpha\wedge \omega_X^{n-1})=0,$ 
and compute:
\begin{align*}
\partial \left(p^*\alpha\wedge p^*\omega_X^{n-1}\wedge q^*\omega_Y^{m}\right)= 
&\, \partial( p^*\alpha\wedge p^*\omega_X^{n-1})\wedge q^*\omega_Y^m\\
=&\, \partial p^*(\alpha \wedge \omega_X^{n-1})\wedge q^*\omega_Y\\
= &\, p^*(\partial(\alpha \wedge \omega_X^{n-1}))\wedge q^*\omega_Y^m\\
= &\, 0.
\end{align*}

Hence $ \bar\partial^*p^*\alpha=0$ if $ \bar\partial^*\alpha=0,$ and a similar proof shows that 
$ \bar\partial^*q^*\beta=0$ if $ \bar\partial^*\beta=0,$ implying that $\bar\partial^*\eta=0.$ 
Therefore ${\mathcal H}_A^{0,1}(X\times Y)$ contains the range of $\mathfrak t_A$, and the 
inequality 
$$
h_A^{0,1}(X)+h_A^{0,1}(Y)\leq h_A^{0,1}(X\times Y)
$$ 
follows easily.
\end{proof}

%%%%%%%%%%%%%%%%%%%%%%%%%%%%%%%%%%%%%%%%%%
%%%%%%%%%%%%%%%%%%%%%%%%%%%%%%%%%%%%%%%%%%
%%%%%%%%%%%%%%%%%%%%%%%%%%%%%%%%%%%%%%%%%%

\subsection{The Bott-Chern cohomology of the blow-ups}
 
 We recall here without proof a recent result of Stelzig, computing the 
 Bott-Chern cohomology of the blow-ups.

 \begin{thm}[Stelzig \cite{stelzig-blow}]
 Let $Y$ be a compact complex manifold of complex dimension 
 $n\geq 2,\,  Z \subset Y$ be a closed complex submanifold of codimension 
 $c\geq 2,$ and $f:X\ra Y$ be the blow-up of $Y$ with center $Z.$ Then, 
 there exists an isomorphism
 $$
 H^{p,q}_{BC}(X) \simeq H^{p,q}_{BC}(Y) \oplus 
 \left(\bigoplus_{i=1}^{c-1} H^{p-i,q-i}_{BC}(Z)\right)
$$
for $p, q \leq n.$
\end{thm}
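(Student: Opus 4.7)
The plan is to construct an explicit comparison morphism and verify it is an isomorphism via the projective bundle formula together with a Mayer--Vietoris-type long exact sequence. Let $j: E \hookrightarrow X$ denote the inclusion of the exceptional divisor, $\pi = f|_E: E = \PP(N_{Z/Y}) \to Z$ the induced $\PP^{c-1}$-bundle, and fix a closed $(1,1)$-form $h$ on $E$ representing $c_1(\OO_E(1))$. The candidate map is
$$
\Phi(\alpha, \beta_1, \ldots, \beta_{c-1}) = f^*\alpha + \sum_{i=1}^{c-1} j_*\bigl(h^{i-1} \wedge \pi^* \beta_i\bigr),
$$
where $j_*$ denotes pushforward of currents. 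Well-definedness on Bott--Chern classes follows because $f^*$ and $\pi^*$ preserve both $d$-closedness and $\partial\bar\partial$-closedness, while $j_*$ commutes with $d$ and $\partial\bar\partial$ in the sense of currents.

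First I would establish a Bott--Chern projective bundle formula
$$
H^{p,q}_{BC}(E) \simeq \bigoplus_{i=0}^{c-1} \pi^* H^{p-i,q-i}_{BC}(Z) \cdot h^i,
$$
which can be obtained by adapting the standard Leray argument, checking at each step that the identifications respect the $\partial\bar\partial$-exact relations (not only $d$-exactness). Next I would produce a long exact sequence comparing the Bott--Chern cohomology of the pair $(X, E)$ to that of $(Y, Z)$, exploiting the fact that $f$ restricts to a biholomorphism $X \setminus E \xrightarrow{\sim} Y \setminus Z$. This is carried out by representing Bott--Chern classes via Schweitzer's complex of currents and comparing the kernels/cokernels under $f^*$ and $f_*$, using the projection formula $f_*(h^{i-1} \wedge \pi^*\beta) = 0$ for $i < c$ and $f_*(h^{c-1} \wedge \pi^*\beta) = \beta$ as the key vanishing/normalization inputs.

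With these ingredients in place, the conclusion follows from a five-lemma argument: the pullback $f^*$ injects $H^{p,q}_{BC}(Y)$ into $H^{p,q}_{BC}(X)$ because $f_* \circ f^* = \mathrm{id}$, and the cokernel is pinned down by the Gysin contributions $j_*(h^{i-1} \wedge \pi^*\beta_i)$ via the projective bundle formula, which identifies the quotient $H^{p,q}_{BC}(E)/\pi^* H^{p,q}_{BC}(Z)$ with $\bigoplus_{i=1}^{c-1} H^{p-i,q-i}_{BC}(Z)$.

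The main obstacle is the long exact sequence step: unlike Dolbeault or de Rham cohomology, Bott--Chern cohomology is not the hypercohomology of a single bounded complex of soft sheaves in the most naive way, so sheaf-theoretic Mayer--Vietoris does not apply directly. Schweitzer's resolution by currents furnishes a workaround, but one must carefully verify exactness of the induced sequence and that the pushforward $j_*$ is compatible with the $\partial\bar\partial$-relations and not only with $d$; this is where the essential content of Stelzig's theorem lies, since the analogous $H^{p,q}_{\bar\partial}$ and $H^k_{dR}$ formulas are classical.
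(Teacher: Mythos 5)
The paper itself states this result without proof, quoting it from Stelzig \cite{stelzig-blow}, so the comparison here is with Stelzig's argument rather than with anything in the text. Measured against that, your sketch has a genuine gap, and it sits exactly where you locate ``the main obstacle.'' Your plan reduces the theorem to (a) a Bott--Chern projective bundle formula and (b) a Mayer--Vietoris/Gysin-type long exact sequence for the pairs $(X,E)$ and $(Y,Z)$, to be combined by the five lemma. Input (b) is not established: Bott--Chern cohomology is the cohomology of a ``corner'' of the double complex, it is not the derived functor of a theory concentrated in a single degree, and there is no off-the-shelf relative Bott--Chern cohomology fitting into a long exact sequence compatible with $j_*$ and $f_*$. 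Constructing such a sequence from Schweitzer's complex and proving its exactness is essentially the whole theorem, so deferring it to ``one must carefully verify exactness'' leaves the argument incomplete rather than merely unpolished. The same criticism applies, less severely, to (a): ``adapting the standard Leray argument'' does not go through verbatim, since Bott--Chern cohomology is not computed by the Leray spectral sequence of the bundle $\pi$. (The parts of your sketch that do work are the well-definedness of $\Phi$ and the injectivity of $f^*$ via $f_*\circ f^*=\mathrm{id}$, both of which rely on the comparison between smooth forms and currents for Bott--Chern cohomology; that comparison holds by elliptic regularity and should be invoked explicitly, since your candidate map lands in currents.)

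It is worth knowing that Stelzig's actual proof avoids every one of these difficulties by changing the level at which the argument takes place. He proves a structure theorem decomposing any bounded double complex into direct sums of ``squares'' and ``zigzags,'' observes that Bott--Chern, Aeppli, Dolbeault and de Rham cohomology all depend only on the square-free part, and shows that a morphism of double complexes inducing isomorphisms on row and column cohomology (an $E_1$-isomorphism) therefore induces isomorphisms on Bott--Chern and Aeppli cohomology as well. The blow-up formula then follows by applying this to a comparison morphism whose behaviour on Dolbeault cohomology is classical; no Gysin sequence for $H^{p,q}_{BC}$ is ever constructed. Your explicit-morphism route can be completed --- this is essentially the approach of Yang--Yang and of Meng, who supply the missing exact sequences via hypercohomology of the Schweitzer complex or relative Dolbeault sheaves --- but as written the sketch names the hard step without carrying it out.
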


As an immediate consequence we notice the following:

\begin{cor}
\label{invariance} 
Let $Y$ be a compact complex manifold of complex dimension 
$n\geq 2,\,  Z \subset Y$ be a closed complex submanifold of codimension 
$c\geq 2,$ and $f:X\ra Y$ be the blow-up of $Y$ with center $Z.$ Then
\begin{itemize}
\item[ 1)] $h^{0,p}_{BC}(X)=h^{0,p}_{BC}(Y)$ for every $p\geq 0.$
\item[ 2)] $h^{0,1}_{A}(X)=h^{0,1}_{A}(Y).$  
\end{itemize}
\end{cor}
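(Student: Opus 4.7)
The plan is to deduce both statements directly from Stelzig's blow-up formula just stated, together with the Bott-Chern/Aeppli duality recalled at the beginning of Section \ref{BCcoh}.

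For part 1), I would specialize Stelzig's isomorphism to bidegree $(0,p)$ for an arbitrary $p\geq 0$. The extra summands produced by the formula have the form $H^{-i,p-i}_{BC}(Z)$ with $i=1,\dots,c-1$, and each of them vanishes because a Bott-Chern cohomology group with a negative bidegree index is automatically zero. Hence the decomposition collapses to $H^{0,p}_{BC}(X)\cong H^{0,p}_{BC}(Y)$ for $p\leq n$, and both sides vanish trivially for $p>n$, giving the claimed equality of Hodge numbers $h^{0,p}_{BC}$.

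For part 2), I would first convert the Aeppli statement into a Bott-Chern statement in top bidegree via the duality $h^{0,1}_A(W)=h^{n,n-1}_{BC}(W)$, valid on any compact complex manifold $W$ of complex dimension $n$ (here $\dim X=\dim Y=n$). Applying Stelzig's theorem with bidegree $(p,q)=(n,n-1)$ produces extra summands of the form $H^{n-i,n-1-i}_{BC}(Z)$ for $i=1,\dots,c-1$. Since $\dim_{\CC}Z=n-c$ and $n-i\geq n-c+1>\dim_{\CC}Z$ for every $i$ in this range, each of these summands is zero on dimensional grounds. Therefore $h^{n,n-1}_{BC}(X)=h^{n,n-1}_{BC}(Y)$, and invoking the duality once more yields $h^{0,1}_A(X)=h^{0,1}_A(Y)$.

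Neither step presents any real obstacle: the whole content of the corollary is that the ``correction terms'' indexed by $Z$ in Stelzig's formula vanish by a bidegree count, and the Aeppli statement is just the dual side of the same phenomenon. In particular, the hypothesis $c\geq 2$ is exactly what guarantees that the relevant indices fall outside the cohomological range of $Z$ in both parts.
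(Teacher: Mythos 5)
Your proof is correct and is exactly the argument the paper has in mind: the corollary is stated as an ``immediate consequence'' of Stelzig's formula with no written proof, and your two vanishing observations (negative bidegree for part 1, and the bidegree exceeding $\dim_{\CC}Z=n-c$ after dualizing $h^{0,1}_A=h^{n,n-1}_{BC}$ for part 2) are precisely the intended justification.
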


\begin{rmk}
As a consequence of the weak factorization theorem \cite{wft}, and the duality 
between the Bott-Chern and Aeppli cohomologies, we find that 
$h^{0,p}_{BC},\, p\geq 0$ and $h^{0,1}_{A}$ are bimeromorphic invariants.
\end{rmk}

%%%%%%%%%%%%%%%%%%%%%%%%%%%%%%%%%%%%%%%%%%
%%%%%%%%%%%%%%%%%%%%%%%%%%%%%%%%%%%%%%%%%%
%%%%%%%%%%%%%%%%%%%%%%%%%%%%%%%%%%%%%%%%%%

\subsection{The Bott-Chern cohomology of complex nilmanifolds.} 
Let $M=\Gamma\backslash G$ be a  compact nilmanifold equipped with a 
left-invariant integrable almost complex  structure $J,$ that is  $J$ comes from a
(left invariant) complex structure, also denoted by $J,$ on the Lie algebra 
$\mathfrak g$ of $G.$ According to Nomizu's Theorem \cite{no}, the de Rham 
cohomology of a compact nilmanifold can be computed by means of
the cohomology of the Lie algebra of the corresponding nilpotent Lie group, 
where the differential in the complex $\wedge^\bullet \mathfrak g^*$ is the 
Chevalley-Cartan differential. This result was refined  
to compute the Dolbeault and Bott-Chern cohomology of complex nilmanifolds 
with nilpotent complex structures in \cite{cfgu} and  \cite{angella}, respectively.
For convenience, we recall the 
main result of \cite{cfgu} which provides a useful tool 
for computing the cohomology of compact nilmanifolds 
with nilpotent complex structures.

\begin{thm}[Main Theorem, \cite{cfgu}]
\label{cfgu-nil}
Let $\Gamma\backslash G$ be a compact nilmanifold with a nilpotent complex 
structure, and let $\mathfrak g$ be the Lie algebra of $G.$ Then there is a 
quasi-isomorphism of complexes 
$$
\left(\wedge^{p,\bullet}\mathfrak g^*_\CC,\bar \partial\right)
\hookrightarrow 
\left (\wedge^{p,\bullet} (\Gamma\backslash G) ,\bar \partial\right).
$$
with respect to the operator $\bar \partial$ in the canonical decomposition 
$d = \partial +\bar \partial$ of the Chevalley-Eilenberg differential 
in $\wedge^\bullet (\mathfrak g^*_\CC).$
\end{thm}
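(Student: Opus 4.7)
The plan is to induct on the length of the ascending chain of $J$-invariant ideals $0=\mathfrak a_0\subset \mathfrak a_1\subset\cdots\subset \mathfrak a_k=\mathfrak g$ produced by the nilpotency of $J$, whose existence is exactly the content of Theorem \ref{equiv-nil}. Each quotient $\mathfrak a_l/\mathfrak a_{l-1}$ is a $J$-stable abelian ideal sitting in the center of $\mathfrak g/\mathfrak a_{l-1}$, so the corresponding Lie-group quotients assemble into a tower
$$
\Gamma\backslash G=M_k\longrightarrow M_{k-1}\longrightarrow\cdots\longrightarrow M_1
$$
of holomorphic principal bundles whose fibers are compact complex tori. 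I would begin by invoking Mal\v cev's rationality criterion \cite{malcev} to produce compatible co-compact lattices in each intermediate quotient, so that every stage of the tower is a genuine compact complex nilmanifold carrying an induced nilpotent complex structure, and so that each $\pi_l : M_l \to M_{l-1}$ is a holomorphic principal torus bundle.

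\textbf{Base case and inductive step.} When $k=1$ the manifold $M_1$ is a compact complex torus, and the claim is classical: every Dolbeault class carries a unique translation-invariant harmonic representative, so the inclusion $\wedge^{p,\bullet}\mathfrak g^*_\CC\hookrightarrow \wedge^{p,\bullet}(M_1)$ is already a quasi-isomorphism. For the inductive step I would exploit the holomorphic principal torus bundle $\pi_l:M_l\to M_{l-1}$ together with its associated Borel-type spectral sequence for Dolbeault cohomology
$$
E_2^{a,b}=\bigoplus_{p+r=a,\,q+s=b} H^{p,q}_{\bar\partial}(M_{l-1})\otimes H^{r,s}_{\bar\partial}(T_l)\Longrightarrow H^{a,b}_{\bar\partial}(M_l),
$$
and its Chevalley-Eilenberg analogue coming from the Lie-algebra filtration induced by $(\mathfrak g/\mathfrak a_{l-1})^*_\CC\hookrightarrow \mathfrak g^*_\CC$. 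The inclusion of invariant forms induces a morphism between the two spectral sequences; on the base factor it is an isomorphism by the inductive hypothesis, and on the torus fiber factor it is an identification on the nose. A routine five-lemma comparison argument then promotes the $E_2$-isomorphism to one of abutments, closing the induction at level $l$.

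\textbf{Main obstacle.} The delicate point I anticipate is arranging a genuine Borel-type spectral sequence for $\bar\partial$-cohomology in this non-K\"ahler setting, that is, showing that the higher direct images $R^s\pi_{l*}\Omega^r_{M_l/M_{l-1}}$ are trivial holomorphic vector bundles whose global sections are precisely the fiberwise left-invariant $(r,s)$-forms. This is where the $J$-invariance and the centrality of $\mathfrak a_l/\mathfrak a_{l-1}$ in $\mathfrak g/\mathfrak a_{l-1}$ become indispensable: they ensure that the fibration is holomorphic, that fiberwise Dolbeault classes are monodromy-invariant under the translation action, and consequently that the $E_2$-page factorizes as a tensor product. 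Making this identification explicit almost certainly requires an averaging argument along the fiber torus to manufacture invariant representatives; once that geometric input is in place, the comparison of the two spectral sequences and the inductive bookkeeping are essentially formal.
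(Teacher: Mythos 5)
You should know at the outset that the paper contains no proof of Theorem \ref{cfgu-nil}: it is quoted as the Main Theorem of \cite{cfgu} and used as a black box, so the only meaningful comparison is with the argument in that reference. Measured against it, your outline is the right one and the same in spirit: induction along the $J$-compatible ascending series, the resulting tower of holomorphic principal torus bundles, and a comparison of the Borel spectral sequence of each fibration with its Chevalley--Eilenberg counterpart, the base case being the classical invariance of Dolbeault cohomology on a compact complex torus.

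The one place where your sketch papers over a genuine difficulty is the very first step, ``invoking Mal\v cev's rationality criterion to produce compatible co-compact lattices in each intermediate quotient.'' Mal\v cev theory gives a lattice in $G/A_l$ only if $\mathfrak a_l$ is a \emph{rational} subalgebra for the rational structure on $\mathfrak g$ determined by $\Gamma$, and this is not automatic: already $\mathfrak a_1=\mathfrak z\cap J\mathfrak z$ involves $J$, which need not preserve the rational structure, even though the center $\mathfrak z$ itself is rational. Arranging a filtration that is simultaneously $J$-invariant and rational (so that each $M_l$ is an honest compact complex nilmanifold and each $\pi_l$ a holomorphic principal bundle with compact torus fibre) is precisely the technical core of the proof in \cite{cfgu}, not a formality dispatched by citing \cite{malcev}. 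By contrast, the point you single out as the main obstacle --- setting up a Borel-type spectral sequence for the fibration and identifying its $E_2$-term as a tensor product --- is comparatively standard here, since the fibre is a compact K\"ahler torus and the structure group of a principal bundle is connected, so the monodromy on fibre cohomology is trivial; the weight of the argument sits elsewhere than where you placed it.
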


As a corollary of Theorem \ref{cfgu-nil}, Angella proved  in \cite[Theorem 3.8]{angella} 
that  the inclusions of the corresponding subcomplexes yield isomorphisms 
$$
H^{p,q}_\#(\mathfrak g_\CC)\xhookrightarrow{\simeq} H^{p,q}_\#(M),
$$
where $\#\in\{BC, A\}.$ We will conclude this section by using Angella's result to 
prove Theorem \ref{torality}.

\begin{proof}[Proof of Theorem \ref{torality}] 
By the symmetry of Bott-Chern and Aeppli cohomologies, we can assume 
$h_{BC}^{1,0}(M)=h_A^{1,0}(M).$ In particular, we have 
\begin{align*}
 H^{1,0}_{BC}(M)\simeq &\, H^{1,0}_{BC}(\mathfrak g_\CC)
 =\{\alpha\in \mathfrak g^{1,0}\,|\, d\alpha=0\}\\
 H^{1,0}_{A}(M)\simeq &\, H^{1,0}_A(\mathfrak g_\CC)
 =\{\eta\in \mathfrak g^{1,0}\,|\, \ddbar \eta=0\}.
\end{align*}

Accordingly, it suffice to show that if 
$h_{BC}^{1,0}(\mathfrak g_\CC)=h_A^{1,0}(\mathfrak g_\CC),$ then 
${\mathfrak g}$ is abelian. Since $M=\Gamma\backslash G$ is 
equipped with a nilpotent complex structure, by Theorem \ref{equiv-nil}, 
there exists a basis $\{\alpha_i, \, 1\leq i\leq n \}$ of the $i$-eigenspace 
$\mathfrak g^{1,0}$ of the extension of $J$ to 
$\mathfrak g^*_\CC=\mathfrak g^*\otimes_\RR \CC$ such that the 
structure equations of $G$ are of the form 
(\ref{structure}). 

Notice first from (\ref{structure}) that we have $d\alpha_1=0.$ Suppose now 
$d\alpha_{\ell}=0$ for every $1\leq {\ell}<s.$ Since $d\alpha_{\ell}=0,$ we have 
$\partial\alpha_{\ell}=\bar\partial\alpha_{\ell}=\partial \bar\alpha_{\ell}
=\bar\partial\bar \alpha_{\ell}=0,\, 1\leq {\ell}< s.$ 
From the structure equations (\ref{structure}) we can immediately see that 
$$
\partial \alpha_s=\sum_{j<k<s} A^s_{j,k}\, \alpha_j\wedge \alpha_k
\quad{\text{and}}\quad \bar \partial \alpha_s=
\sum_{j,k<i} B^s_{j,k}\, \alpha_j\wedge \bar \alpha_k.
$$
Then $\ddbar \alpha_s=0,$ which means 
$\alpha_s\in H^{1,0}_A(\mathfrak g_\CC)=H^{1,0}_{BC}(\mathfrak g_\CC).$ 
Therefore $d\alpha_s=0.$ By induction, it follows that $d\alpha_i=0,$ for every 
$i=1,\dots,n,$ which is equivalent to $\mathfrak g$ being abelian.
\end{proof}

%%%%%%%%%%%%%%%%%%%%%%%%%%%%%%%%%%%%%%%%%%%%%%%%%%%
%%%%%%%%%%%%%%%%%%%%%%%%%%%%%%%%%%%%%%%%%%%%%%%%%%%
%%%%%%%%%%%%%%%%%%%%%%%%%%%%%%%%%%%%%%%%%%%%%%%%%%%
%%%%%%%%%%%%%%%%%%%%%%%%%%%%%%%%%%%%%%%%%%%%%%%%%%%

 \section{A new obstruction}
 \label{g-arg}
  
Let $(M,\omega)$ be compact an astheno-K\"ahler manifold of complex dimension 
$n,$ and let $f$ be a ${\mathcal C}^{\infty}$ real function on $M$ such that 
$e^{(n-1)f}\omega^{n-1}$ is $i\partial\bar\partial$-closed. The existence of $f$ follows 
from Gauduchon's Theorem \cite{gauduchon}. We define now a linear map  
$L:H^{0,1}_A(M)\to {\mathbb C}$ as follows: 
\begin{equation}
\label{degree}
L(\{\alpha\})=\int_M\partial\alpha\wedge e^{(n-1)f}\omega^{n-1}.
\end{equation}
Note that $L$ is well-defined since $e^{(n-1)f}\omega^{n-1}$ is $\partial\bar\partial$-closed.

\smallskip

\begin{proof}[Proof of Theorem \ref{c-ineq}]
The natural morphism induced by the identity
$$
{\mathfrak i}_{0,1}:H^{0,1}_{BC}(M)\to H^{0,1}_A(M)
$$ 
is always injective, and $L({\iota}_{0,1}([\phi]))=0$ for all $[\phi]\in H^{0,1}_{BC}(M).$ 
Therefore we have a complex
\begin{equation}
\label{L-sequence}
0\to H^{0,1}_{BC}(M)\xrightarrow{{\mathfrak i}_{0,1}} H^{0,1}_A(M)
\xrightarrow{L} {\mathbb C}.
\end{equation}
To prove Theorem \ref{c-ineq}, it suffices to prove that the sequence (\ref{L-sequence}) 
is exact, that is to show that  $\ker(L)\subseteq H^{0,1}_{BC}(M).$ 

\smallskip

Let $\{\alpha\}\in \ker(L),$ i.e.,   
$
\displaystyle \int_M\partial\alpha \wedge e^{(n-1)f}\omega^{n-1}=0.
$
Consider the elliptic differential operator 
$$
P:{\mathcal C}^{\infty}(M)\to {\mathcal C}^{\infty}(M), 
\quad P(h)=\Lambda (\partial\bar\partial h)
$$ 
where $\Lambda$ is the adjoint of 
$$
{\mathcal C}^{\infty}(M)\ni g\to ge^f\omega
\in{\mathcal A}^{1,1}(M)
$$ 
with respect to the metrics induced by $e^f\omega.$ It is easy to see that the formal 
adjoint of $P$ is 
$$
P^*(h)=\bar\partial^*\partial^*(he^f\omega).
$$ 
Another way of writing $P^*$ is 
$$
P^*(h)=-\star\partial\bar\partial \left(h\frac{e^{(n-1)f}\omega^{n-1}}{(n-1)!}\right)
$$
and it is well known \cite{gauduchon} that, since 
$
\displaystyle \partial\bar\partial\left(e^{(n-1)f}\omega^{n-1}\right)=0,
$
the kernel of $P^*$ is $1$-dimensional. Namely, it consists of the constant 
functions on $M$. 

Since $P$ is elliptic, we have the following orthogonal decomposition:
$$
{\mathcal C}^{\infty}(M)=P({\mathcal C}^{\infty}(M))\oplus \ker(P^*)
$$
Therefore $\Lambda(\partial\alpha)\in P({\mathcal C}^{\infty}(M)$ if and only if 
$\Lambda(\partial\alpha)$ is orthogonal to $\ker(P^*)$, that is, if and only if 
$((\Lambda(\partial\alpha), 1))=0.$ But 
$$
((\Lambda(\partial\alpha), 1))=((\partial\alpha, e^f\omega))=
\int_M\partial\alpha\wedge\overline{\star e^f\omega}=
\int_M\partial\alpha\wedge\frac{e^{(n-1)f}\omega^{n-1}}{(n-1)!}=0
$$ 
since $\{\alpha\}$ is in the kernel of $L$ and 
$$
\star e^f\omega= \frac{e^{(n-1)f}\omega^{n-1}}{(n-1)!}
$$ 
where the Hodge star operator is with respect to $e^f\omega$. Therefore 
$\Lambda (\partial\alpha)$ is in the range of $P,$ which means that $\{\alpha\}$ 
admits a representative, also denoted by $\alpha$, such that $\Lambda(\partial\alpha)=0$. 
This means that $\partial\alpha$ is primitive with respect to $e^f\omega$. It is easy to see 
that $\partial\alpha$ is also primitive with respect to $\omega$, and therefore 
$$
\star\partial\alpha=-\frac{1}{(n-2)!}\partial\alpha\wedge\omega^{n-2}
$$ 
\cite[Proposition 6.29, p.150]{voisin},  where the Hodge star operator here is considered 
with respect to $\omega.$ Therefore the $L^2$ norm of $\partial\alpha$ with respect 
to $\omega$ is 
$$
\vert\vert\partial\alpha\vert\vert^2=
\int_M\partial\alpha\wedge\overline{\star\partial\alpha}=
-\frac{1}{(n-2)!}\int_M\partial\alpha\wedge
\bar\partial\bar\alpha\wedge\omega^{n-2}
$$ 
Since $\bar\partial\alpha$ is of type $(0,2)$,  it is also primitive with respect to $\omega,$ 
and so 
$$
\star\bar\partial\alpha=\frac{1}{(n-2)!}\bar\partial\alpha\wedge \omega^{n-2}
$$ 
\cite[Proposition 6.29, p.150]{voisin} and the $L^2$ norm of $\bar\partial\alpha$ with 
respect to $\omega$ is 
$$
\vert\vert\bar\partial\alpha\vert\vert^2=
\frac{1}{(n-2)!}\int_M\bar\partial\alpha\wedge
\partial\bar\alpha\wedge\omega^{n-2}.
$$

As the metric $\omega$ is astheno-K\"ahler, $\partial\bar\partial\omega^{n-2}=0,$ and it 
follows that 
$$
0=\int_M\partial\bar\partial(\alpha\wedge\bar\alpha)\wedge\omega^{n-2}=
\int_M(\bar\partial\alpha\wedge\partial\bar\alpha-
\partial\alpha\wedge\bar\partial\bar\alpha)\wedge \omega^{n-2}=
(n-2)!(\vert\vert\bar\partial\alpha\vert\vert^2+\vert\vert\partial\alpha\vert\vert^2)
$$
therefore $\partial\alpha=\bar\partial\alpha=0$, which means that 
$\alpha$ is in $H^{0,1}_{BC}(M)$.

Since the sequence 
$$
0\to H^{0,1}_{BC}(M)\to H^{0,1}_A(M)\to {\mathbb C}
$$ 
is exact, Theorem \ref{c-ineq} follows.
\end{proof}
\begin{cor}
Every SKT compact complex manifold of dimension three satisfies the inequalities (\ref{ineq}).
\end{cor}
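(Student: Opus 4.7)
The plan is to observe that in complex dimension three, the SKT and astheno-K\"ahler conditions coincide, so the corollary reduces to a direct application of Theorem \ref{c-ineq}. Concretely, a Hermitian metric $\omega$ on an $n$-dimensional complex manifold is SKT (equivalently $(n-1)$-pluriclosed) when $i\ddbar\omega^{n-(n-1)}=i\ddbar\omega=0$, while it is astheno-K\"ahler when $\ddbar\omega^{n-2}=0$. For $n=3$ the exponents $n-(n-1)=1$ and $n-2=1$ agree, so both conditions read $\ddbar\omega=0$.

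Therefore, the first step is simply to unwind the definitions in the case $n=3$ and record that any SKT metric on a compact complex threefold is also astheno-K\"ahler. The second (and only substantive) step is to invoke Theorem \ref{c-ineq} applied to $(M,\omega)$, which immediately yields
\[
h^{0,1}_{BC}(M)\leq h^{0,1}_{A}(M)\leq h^{0,1}_{BC}(M)+1.
\]

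There is no real obstacle here: the corollary is a dimensional coincidence together with a direct invocation of the main theorem. The only thing to be mindful of is to verify that the convention used for SKT in the paper (namely $(n-1)$-pluriclosedness in the sense of the Definition opening Section \ref{eao}) indeed matches $\ddbar\omega=0$ when $n=3$, which is clear from the formula $i\ddbar\omega^{n-p}=0$ with $p=n-1$.
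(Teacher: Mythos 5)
Your proof is correct and is exactly the argument the paper intends: the corollary is stated without proof as an immediate consequence of Theorem \ref{c-ineq}, relying on the coincidence of the SKT and astheno-K\"ahler conditions in complex dimension three (which the paper itself records at the start of Section \ref{e-ak-nil}). Your verification that both conditions reduce to $i\ddbar\omega=0$ when $n=3$ is the right and only check needed.
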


\begin{rmk}
According to the Jost-Yau's Lemma \ref{jy}, on an astheno-K\"ahler manifold, every 
$\bar\partial$-closed $1$-form (i.e., a holomorphic $1$-form) is $d$-closed. It follows 
that the identity map on $(1,0)$-forms induces a morphism 
$$
H^{1,0}_{\bar\partial}(M)\to H^{1,0}_{BC}(M)
$$
inverse to the map 
$$
H^{1,0}_{BC}(M)\to H^{1,0}_{\bar\partial}(M),
$$ 
also induced by the identity.  Hence this map is an isomorphism, and so 
$h^{1,0}_{BC}=h^{1,0}_{\bar\partial}.$ Therefore, on an astheno-K\"ahler manifold 
we have 
$$
h^{1,0}_{A}-h^{1,0}_{\bar\partial}\in\{0,1\}.
$$
\end{rmk}
\begin{rmk}
The cohomology group $H^{0,1}_{\bar\partial}(M)$ lies between 
$H^{0,1}_A(M)$ and $H^{0,1}_{BC}(M)$, i.e., the natural morphisms 
$H^{0,1}_{\bar\partial}(M)\to H^{0,1}_A(M)$ and 
$H^{0,1}_{BC}(M)\to H^{0,1}_{\bar\partial}(M)$ are injective, so it follows that, on an 
astheno-K\"ahler manifold, at least one of the morphisms 
$$
H^{0,1}_{\bar\partial}(M)\to H^{0,1}_A(M)\qquad{\text{and}}\qquad  
H^{0,1}_{BC}(M)\to H^{0,1}_{\bar\partial}(M)
$$ 
is an isomorphism.
\end{rmk}
\begin{rmk}
\label{surfaces} 
Theorem \ref{c-ineq} generalizes in arbitrary dimension classical results on compact 
complex surfaces. Any compact complex surface is trivially astheno-K\"ahler, and if 
the surface also admits a K\"ahler metric, then $h^{0,1}_A=h^{0,1}_{BC}$ by the 
$\ddbar$-lemma. If the surface is non-K\"ahler, one has $h^{0,1}_A=h^{0,1}_{BC}+1.$ 
Indeed, from \cite[Theorem 3]{kodaira}, it is known that, on a non-K\"ahler surface, 
$h^{0,1}_{\bar\partial}=h^{1,0}_{\bar\partial}+1$. On the other hand, on compact complex 
surfaces, $h^{0,1}_{BC}=h^{1,0}_{BC}=h^{1,0}_{\bar\partial}$ and 
$h^{0,1}_{\bar\partial}=h^{0,1}_A$.
\end{rmk}

\begin{rmk}
From the above proof, it follows that on an astheno-K\"ahler manifold one has 
$h^{0,1}_{BC}=h^{0,1}_A$ if and only if $\partial( e^{(n-1)f}\omega^{n-1})$ is 
$\partial\bar\partial$-exact. Indeed, if $\partial( e^{(n-1)f}\omega^{n-1})$ 
is $\partial\bar\partial$-exact, the application $L$ defined above is zero, hence 
$h^{0,1}_{BC}=h^{0,1}_A$. Conversely, if $h^{0,1}_{BC}=h^{0,1}_A,$ the natural 
morphism 
$$
H^{n-1,n-1}_{BC}(M,{\mathbb C})\to H^{n-1,n-1}_A(M,{\mathbb C})
$$ 
is onto (see Proposition \ref{maps} below), implying that $\partial( e^{(n-1)f}\omega^{n-1})$ 
is $\partial\bar\partial$-exact since $e^{(n-1)f}\omega^{n-1}$ is $\partial\bar\partial$-closed.
\end{rmk}

\begin{prop}
\label{maps}
Let $M$ be a compact complex astheno-K\"ahler $n$-fold, and let 
$$
\frak i_{p,q}:H^{p,q}_{BC}(M)\to H^{p,q}_A(M)
$$ 
be the map induced by the identity. Then $\frak i_{n-1,n-1}$ is surjective if and only if 
$\frak i_{0,1}$ is injective.
\end{prop}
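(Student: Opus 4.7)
The plan is to exploit Schweitzer's perfect duality $H^{p,q}_{BC}(M)\times H^{n-p,n-q}_A(M)\to\mathbb{C}$: under this pairing, the transpose of $\mathfrak{i}_{n-1,n-1}$ is precisely $\mathfrak{i}_{1,1}$, since for $[\phi]\in H^{1,1}_{BC}$ and $[\eta]\in H^{n-1,n-1}_{BC}$ one computes $\mathfrak{i}_{n-1,n-1}^{\ast}([\phi])([\eta]) = \int_M\phi\wedge\eta = \langle\mathfrak{i}_{1,1}([\phi]),[\eta]\rangle$. Hence $\mathfrak{i}_{n-1,n-1}$ is surjective if and only if $\mathfrak{i}_{1,1}$ is injective, and the task reduces to showing that injectivity of $\mathfrak{i}_{1,1}$ is equivalent to injectivity of $\mathfrak{i}_{0,1}$.

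For the implication $\mathfrak{i}_{1,1}$ injective $\Rightarrow\mathfrak{i}_{0,1}$ injective, I would take $[\alpha]\in\ker(\mathfrak{i}_{0,1})$. Since Bott-Chern exactness is vacuous in bidegree $(0,1)$, this unfolds as $\alpha$ $d$-closed with $\alpha=\bar\partial f$ for some smooth $f$, and $d\alpha=0$ forces $\partial\bar\partial f=0$. I consider the weakly positive $(1,1)$-form $\beta:=i\,\partial\bar f\wedge\bar\partial f$: it is $d$-closed because $\alpha$ and $\bar\alpha$ are, and it is Aeppli-exact via the identity $\bar\partial(-if\,\partial\bar f)=-i\bar\partial f\wedge\partial\bar f=i\partial\bar f\wedge\bar\partial f=\beta$, using $\bar\partial\partial\bar f=-\partial\bar\partial\bar f=0$. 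By injectivity of $\mathfrak{i}_{1,1}$ we may write $\beta=i\partial\bar\partial g$ for some smooth real $g$; since $\beta\geq 0$ makes $g$ plurisubharmonic on the compact manifold $M$, the maximum principle forces $g$ constant, so $\beta=0$ and hence $\alpha=0$.

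For the reverse implication I would take a class in $\ker(\mathfrak{i}_{1,1})$ represented by a $d$-closed $(1,1)$-form $\eta=\partial a+\bar\partial b$ with $a\in\mathcal{A}^{0,1}(M)$, $b\in\mathcal{A}^{1,0}(M)$. Decomposing $d\eta=0$ by bidegree yields $\partial\bar\partial a=0$ and $\partial\bar\partial b=0$, so $\{a\}\in H^{0,1}_A(M)$ and $\{b\}\in H^{1,0}_A(M)$. Combining injectivity of $\mathfrak{i}_{0,1}$ with its conjugate $\mathfrak{i}_{1,0}$ (via the symmetry $H^{p,q}_\#=\overline{H^{q,p}_\#}$), together with the astheno-K\"ahler exact sequence $0\to H^{0,1}_{BC}\to H^{0,1}_A\xrightarrow{L}\mathbb{C}$ from the proof of Theorem~\ref{c-ineq}, I would extract $d$-closed representatives $\alpha$, $\beta$ and functions $h$, $k$ with $a=\alpha+\bar\partial h$, $b=\beta+\partial k$. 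Substituting back gives $\eta=\partial\bar\partial h-\partial\bar\partial k=\partial\bar\partial(h-k)$, which exhibits $\eta$ as Bott-Chern exact, so $[\eta]=0$ in $H^{1,1}_{BC}$.

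The main obstacle will be this last step: leveraging only the injectivity of $\mathfrak{i}_{0,1}$ to extract a $d$-closed representative for the arbitrary Aeppli class $\{a\}$. This is exactly where the astheno-K\"ahler hypothesis enters essentially, through the Gauduchon-type functional $L$ of Theorem~\ref{c-ineq}; control of $L$ is what promotes the injectivity of $\mathfrak{i}_{0,1}$ to the representability needed to finish the argument, paralleling the logical chain in the Remark preceding the proposition.
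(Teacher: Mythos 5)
Your duality reduction (transpose of $\frak i_{n-1,n-1}$ under Schweitzer's pairing is $\frak i_{1,1}$, so surjectivity of the former is injectivity of the latter) is correct and is exactly the paper's first step. But the obstacle you flag at the end is not a technical wrinkle to be smoothed over later: it is a genuine, unfillable gap, and the root cause is that the hypothesis ``$\frak i_{0,1}$ injective'' is vacuous. On \emph{any} compact complex manifold, Bott-Chern exactness in bidegree $(0,1)$ is trivial, so an element of $\ker \frak i_{0,1}$ is a $d$-closed form $\alpha=\bar\partial f$; then $\ddbar f=\partial\alpha=0$, so the real and imaginary parts of $f$ are pluriharmonic, hence constant by the maximum principle (applied to the Chern Laplacian of any Hermitian metric), and $\alpha=0$. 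The paper itself notes this in the proof of Theorem \ref{c-ineq} (``is always injective''). Consequently your first direction, though the positivity computation with $\beta=i\,\partial\bar f\wedge\bar\partial f$ is correct, proves an unconditional fact and never needed $\frak i_{1,1}$; and your second direction attempts an implication that is actually false. Indeed, read literally, the statement would assert that $\frak i_{n-1,n-1}$ is surjective on every compact astheno-K\"ahler manifold, and any compact non-K\"ahler surface is a counterexample: it is trivially astheno-K\"ahler, $\frak i_{0,1}$ is injective, yet $h^{0,1}_A=h^{0,1}_{BC}+1$ (Remark \ref{surfaces}); choosing $\{\alpha\}\in H^{0,1}_A$ with $L(\{\alpha\})=\int_M \partial\alpha\wedge\omega_G\neq 0$ for a Gauduchon metric $\omega_G$, the form $\partial\alpha$ is $d$-closed and Aeppli-exact but not $\ddbar$-exact (pairing against $\omega_G$ and integrating by parts), so $\frak i_{1,1}$ is not injective and, dually, $\frak i_{n-1,n-1}$ is not surjective. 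This is precisely why ``leveraging only injectivity of $\frak i_{0,1}$'' cannot produce $d$-closed representatives of arbitrary Aeppli classes: that property is surjectivity of $\frak i_{0,1}$, a strictly stronger condition which fails exactly on the classes with $L\neq 0$.

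The resolution is that ``injective'' in the statement is a misprint for ``surjective'' (equivalently, bijective): this is what the paper's own proof addresses (it begins ``Suppose the map $\frak i_{0,1}$ is onto'') and what the remark preceding the proposition uses. With the corrected hypothesis, the substitution scheme in your second paragraph becomes the paper's argument and needs neither the functional $L$ nor any harmonic theory: surjectivity of $\frak i_{0,1}$ and of its conjugate $\frak i_{1,0}$ means that every $\ddbar$-closed $(0,1)$-form $a$ satisfies $\partial a=-\ddbar g$ and every $\ddbar$-closed $(1,0)$-form $b$ satisfies $\bar\partial b=-\ddbar h$, whence a $d$-closed $\eta=\partial a+\bar\partial b$ equals $-\ddbar(g+h)$ and $[\eta]_{BC}=0$. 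The astheno-K\"ahler hypothesis enters only in the converse direction, which your proposal never addresses: if $\frak i_{1,1}$ is injective and $\{\alpha\}\in H^{0,1}_A$, then $\partial\alpha$ is $d$-closed and Aeppli-exact, hence $\partial\alpha=\ddbar f$ by injectivity; then $\bar\alpha-\partial\bar f$ is a holomorphic $1$-form, which is closed by the Jost-Yau Lemma \ref{jy}, so $\alpha-\bar\partial f$ is a $d$-closed representative of $\{\alpha\}$, proving $\frak i_{0,1}$ is onto.
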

\begin{proof}
Before proceeding with the proof, notice, by duality, that $\frak i_{n-1,n-1}$ is surjective 
if and only if $\frak i_{1,1}$ is injective. That means $\frak i_{n-1,n-1}$ is surjective 
if and only if  for every $d$-closed $(1,1)$-form  $\phi$ such that 
$\phi=\partial \alpha+\bar\partial \beta,$ where $\alpha$ and $\beta$ are  smooth forms 
of type $(0,1)$ and $(1,0),$ respectively, there exists a smooth function 
$f:M\ra \CC$ such that $\phi=\partial\bar \partial f.$

\smallskip

Suppose the map  $\frak i_{0,1}$ is onto. That means for every $\partial \bar\partial$-closed 
$(0,1)$-form $\alpha,$ there exists a smooth function $g:M\ra \CC$ such that 
$\alpha +\bar \partial g$ is $d$-closed, i.e., $\bar \partial\alpha=0,$ and 
$\partial \alpha=-\partial \bar\partial g.$  By the symmetry of the Bott-Chern cohomology, 
it also follows that for every $\partial \bar\partial$-closed $(1,0)$-form $\beta,$ there exists 
a smooth function $h:M\ra \CC$ such that $\partial\beta=0,$ and 
$\bar \partial \beta=-\partial \bar\partial h.$ Let now  $\phi$  be a $d$-closed $(1,1)$-form 
such that $\phi=\partial \alpha+\bar\partial \beta,$ where $\alpha$ and $\beta$ are 
$(0,1)$ and $(1,0)$ smooth forms, respectively. Since $\phi$ is $d$-closed, then  
$\partial \bar\partial \alpha=\partial \bar\partial \beta =0.$ It follows that 
$\phi=-\partial \bar\partial (g+h).$

\smallskip

Conversely, let $\alpha$ be a  $(0,1)$-form such that $\partial \bar\partial \alpha=0.$ 
Let $\phi=\partial \alpha.$ Then $\phi$ is a real $d$-closed $(1,1)$-form in the kernel 
of $\frak i_{1,1}.$ Since $\frak i_{1,1}$ is by assumption injective, there exists a smooth 
function $f:M\ra \CC$ such that $\phi=\partial\bar\partial f,$ which means 
$\partial\alpha=\partial\bar\partial f.$ Therefore $\bar\partial (\bar\alpha-\partial\bar f)=0,$ 
and from the Jost-Yau result, it follows that  $\alpha-\bar\partial f$ is $d$-closed. That implies 
$\bar\partial \alpha=0$ which concludes the proof of the proposition.
\end{proof}

%%%%%%%%%%%%%%%%%%%%%%%%%%%%%%%%%%%%%%%%%%%%%%%%%%%%%%
%%%%%%%%%%%%%%%%%%%%%%%%%%%%%%%%%%%%%%%%%%%%%%%%%%%%%%
%%%%%%%%%%%%%%%%%%%%%%%%%%%%%%%%%%%%%%%%%%%%%%%%%%%%%%
%%%%%%%%%%%%%%%%%%%%%%%%%%%%%%%%%%%%%%%%%%%%%%%%%%%%%%

\section{Applications} 
\label{applications}

We proceed by giving two examples of solvmanifolds, where Theorem \ref{c-ineq} prohibits 
the existence of astheno-K\"ahler metrics.

\subsection{The complex parallelizable Nakamura manifold.}

An example where our cohomological obstruction in Theorem \ref{c-ineq} applies directly 
is the complex parallelizable Nakamura manifold. We will follow closely the description in 
\cite{ak}.
 
 \smallskip
 
Let $G $ be semidirect product $\CC \ltimes_{\phi} \CC^2$ defined by 
$$
\phi (z) = \left(\begin{array}{cc}e^z & 0 \\0 & e^{-z}\end{array}\right).
$$
Then there exist $a+bi, c+di\in\CC$ such that $\ZZ(a+bi)+\ZZ(c+di)$is a lattice in $\CC$ 
and $\phi (a+bi)$ and $\phi(c+di)$ are conjugate to elements of $SL(4;\ZZ),$ where we 
regard $SL(2;\CC) \subset SL(4;\RR)$. Hence we have a lattice 
$\Gamma:=\left (\ZZ(a+bi)+\ZZ(c+di)\right) \ltimes_{\phi} \Gamma''$ 
of $G$ such that $\Gamma''$ is a lattice of $\CC^2.$ The quotient manifold 
$\Gamma \backslash G$ is the compact complex parallelizable manifold constructed 
by  Nakamura \cite[\S2]{nak}. 
 
The Bott-Chern cohomology of the Nakamura manifold was computed by Angella and 
Kasuya \cite{ak}. They found that $h^{0,1}_{BC}=1,$  and if $b, d\in \ZZ\pi,$ then 
$h^{0,1}_{A}=5,$ while  $h^{0,1}_{A}=3$ if $b\notin\ZZ\pi$ or  $d\notin\ZZ\pi.$ 
As a direct consequence of Theorem \ref{ineq}, we have:
\begin{cor}
The complex parallelizable Nakamura manifold does not admit astheno-K\"ahler metrics.
\end{cor}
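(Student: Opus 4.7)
The plan is to derive the non-existence of an astheno-Kähler metric directly from the numerical obstruction in Theorem \ref{c-ineq}, by comparing it with the Angella–Kasuya computation of the Bott–Chern and Aeppli numbers of the complex parallelizable Nakamura manifold recalled just above the corollary.

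First, I would write down the relevant input: on $M=\Gamma\backslash G$ one has $h^{0,1}_{BC}(M)=1$, while $h^{0,1}_{A}(M)=5$ in the case $b,d\in\ZZ\pi$ and $h^{0,1}_{A}(M)=3$ otherwise. In either case the gap $h^{0,1}_{A}(M)-h^{0,1}_{BC}(M)$ equals $4$ or $2$, and in particular
\[
h^{0,1}_{A}(M)\;\geq\;3\;>\;2\;=\;h^{0,1}_{BC}(M)+1.
\]

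Next I would apply Theorem \ref{c-ineq}: if $M$ carried an astheno-Kähler metric, then necessarily $h^{0,1}_{A}(M)\leq h^{0,1}_{BC}(M)+1$, which contradicts the inequality above. Therefore no such metric can exist on the complex parallelizable Nakamura manifold, regardless of the choice of lattice parameters $a+bi$ and $c+di$.

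There is no real obstacle in the argument itself; the work is entirely packaged into Theorem \ref{c-ineq} on the one hand and the cohomology computation of \cite{ak} on the other. The only point worth emphasizing is that the Jost–Yau obstruction of Lemma \ref{jy} is insufficient here, since on the complex parallelizable Nakamura manifold every holomorphic $1$-form is automatically $d$-closed (the $(1,0)$-forms coming from $\mathfrak g^*$ are closed on any complex parallelizable manifold, cf.\ Remark \ref{para}), so the new cohomological obstruction given by (\ref{ineq}) is genuinely needed to rule out astheno-Kähler metrics in this example.
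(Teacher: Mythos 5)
Your main argument is correct and is exactly the paper's proof: combine the Angella--Kasuya values $h^{0,1}_{BC}=1$ and $h^{0,1}_{A}\in\{3,5\}$ with the inequality $h^{0,1}_{A}\leq h^{0,1}_{BC}+1$ from Theorem \ref{c-ineq} to get a contradiction in either lattice case.

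One caveat about your closing aside: the claim that the Jost--Yau obstruction is insufficient here because ``every holomorphic $1$-form on a complex parallelizable manifold is automatically $d$-closed'' is false. On a complex parallelizable (nil- or solv-)manifold the invariant $(1,0)$-forms are holomorphic but need not be closed; for the Nakamura manifold the form $e^{-z}\,dw_1$ is holomorphic with $d(e^{-z}dw_1)=-e^{-z}dz\wedge dw_1\neq 0$, so Lemma \ref{jy} already rules out astheno-K\"ahler metrics. Indeed, the paper's remark after the corollary cites Biswas, who used precisely the Jost--Yau obstruction to show that any complex parallelizable manifold carrying an astheno-K\"ahler metric is a torus; the point of the example is only that the new obstruction gives an independent proof, not that it is the only one available. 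This does not affect the validity of your proof of the corollary itself.
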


\begin{rmk}
Biswas proved  in \cite{biswas} that any compact complex parallelizable manifold admitting 
an astheno-K\"ahler metric is a compact complex torus. While Biswas' proof relies on the 
Jost and Yau's obstruction in Lemma \ref{jy}, in the case of the complex parallelizable 
Nakamura manifold, our proof relies on the new obstruction in Theorem \ref{c-ineq}.
\end{rmk}

%%%%%%%%%%%%%%%%%%%%%%%%%%%%%%%%%%%%%%%%%%
%%%%%%%%%%%%%%%%%%%%%%%%%%%%%%%%%%%%%%%%%%
%%%%%%%%%%%%%%%%%%%%%%%%%%%%%%%%%%%%%%%%%%

 \subsection{The Oeljeklaus-Toma manifolds}
 
The Oeljeklaus-Toma manifolds, introduced in \cite{ot} are interesting examples of compact, 
complex, non-K\"ahler manifolds, generalizing the Inoue surfaces \cite{inoue}.

 \medskip
 
Let $s$ and $t$ be two positive integers and consider $K\simeq \QQ[X]/(f)$ an algebraic 
number field, where $f\in\QQ[X]$ is a monic irreducible polynomial of degree $n=[K:\QQ]$ 
with $s$ real roots and $2t$ complex roots (\cite[Remark 1.1]{ot}). The field $K$ admits $s$ 
real embeddings and $2t$ complex embeddings:
$$
\sigma_1,\dots,\sigma_s:K\ra \RR 
$$
$$
\sigma_{s+1},\dots,\sigma_{s+2t}:K\ra \CC, 
{\text{where}}\, \sigma_{s+t+j}=\bar \sigma_{s+j},  j=1,\dots,t.
$$
The ring $\OO_K$ of algebraic integers of $K$ is a finitely-generated free Abelian group of 
rank $n.$ By the Dirichlet unit theorem, the multiplicative group $\OO_K^*$ of units of 
$\OO_K$ is a finitely-generated free Abelian group of rank $s + t -1.$ Furthermore, let
$$
\OO^{*, +}_K=\{a\in \OO_K^*\, |\, \sigma_i(a)>0, 
\, {\text{for all}}\,  i=1\dots, s\}.
$$ 
be the subgroup totally positive units. It is a finite index subgroup of $\OO_K^*.$

Denote now by $\mathbb H := \{z\in \CC\, |
\,  \mathfrak{Im}(z) > 0\}$ 
the upper complex half-plane. There exists an action 
$$
\OO_K\times \OO_K^{*,+} 
\circlearrowleft \HH^s\times \CC^t,
$$
induced by the translation 
$T: \OO_K \circlearrowleft \HH^s\times \CC^t$ given by 
$$
T_a(w_1,\dots,w_s, z_{s+1},\dots,z_{s+t})
=(w_1+\sigma_1(a),\dots,\dots,z_{s+t}+\sigma_{s+t}(a)),
$$
and by the multiplication $R: \OO_K^{*,+} \circlearrowleft \HH^s\times \CC^t$ given by 
$$
R_u(w_1,\dots, w_s, z_{s+1},\dots,z_{s+t})
=(w_1\cdot \sigma_1(u),\dots,\dots,z_{s+t}\cdot \sigma_{s+t}(u)).
$$

According to \cite[page 162]{ot},  one can always choose a rank $s$ subgroup 
$U\subset \OO^{*,+}_K$ such that the the induced action 
$\OO_K\times U\circlearrowleft \HH^s\times \CC^t$ is fixed-point-free, properly 
discontinuous and co-compact. One defines the Oeljeklaus-Toma manifold of type $(s, t)$
associated to the algebraic number field $K$ and to the admissible subgroup $U$ of 
$\OO_K^{*,+}$ as 
$$
X(K,U)= (\mathbb H\times \CC^t)/\OO_K\times U.
$$
 
A result of Kasuya \cite{kasuya} shows that the Oeljeklaus-Toma manifolds are 
solvmanifolds, a result with several consequences. In particular, one can use it to 
obtain information about the Dolbeault and Bott-Chern cohomology of such manifolds 
(see \cite{ados} and the references therein). In the following lemma, we compute the 
cohomology groups $h^{0,1}_{\#},$ for $\#\in \{BC, A\}.$  The authors would like to 
thank A. Otiman for kindly communicating to us the proof of this result.
\begin{lemma}
\label{otbc}
Let $X$ be an Oeljeklaus-Toma manifold of type $(s,t).$ Then 
$$
h^{0,1}_{BC}(X)=0\quad{\text{and}}\quad h^{0,1}_A(X)=s.
$$
\end{lemma}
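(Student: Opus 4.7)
The plan is to analyze equivariant differential forms on the universal cover $\mathbb{H}^s\times\mathbb{C}^t$ under the action of $\mathcal{O}_K\rtimes U$ and read off both cohomology groups from the resulting invariant subcomplex.

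For the vanishing $h^{0,1}_{BC}(X)=0$: by Schweitzer's symmetry \eqref{symmetry} one has $h^{0,1}_{BC}(X)=h^{1,0}_{BC}(X)$, and since $H^{1,0}_{BC}(X)$ embeds into $H^{1,0}_{\bar\partial}(X)=H^0(X,\Omega^1_X)$, it suffices to show that $X$ carries no nonzero holomorphic $1$-form. Any such form lifts to $\tilde\phi=\sum_k f_k\,dw_k+\sum_j g_j\,dz_{s+j}$ on $\mathbb{H}^s\times\mathbb{C}^t$ with $f_k,g_j$ holomorphic, invariant under the translations by $\sigma(a)$, $a\in\mathcal{O}_K$, and transforming under $u\in U$ by $\sigma_k(u)^{-1}$ (resp.\ $\sigma_{s+j}(u)^{-1}$). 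Since $n=s+2t\ge 3$, the projection of $\sigma(\mathcal{O}_K)$ to each coordinate factor is dense, and a holomorphic function on $\mathbb{H}$ (resp.\ $\mathbb{C}$) invariant under a dense subgroup of translations is necessarily constant; hence the $f_k$ and $g_j$ are constants. The $U$-equivariance then forces $\sigma_k(u)f_k=f_k$ and $\sigma_{s+j}(u)g_j=g_j$ for all $u\in U$, and admissibility of $U$ ensures that these constants must vanish, so $\tilde\phi=0$.

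For the lower bound $h^{0,1}_A(X)\ge s$, let $y_k:=\mathrm{Im}\,w_k$ and introduce the $(0,1)$-forms
\[
\bar\omega_k\;:=\;\frac{d\bar w_k}{y_k},\qquad k=1,\ldots,s.
\]
Each $\bar\omega_k$ is $(\mathcal{O}_K\rtimes U)$-invariant, since both $y_k$ and $d\bar w_k$ are unchanged by $\mathcal{O}_K$-translations and under $u\in U$ both are multiplied by the positive real number $\sigma_k(u)$. A short computation gives $\bar\partial\bar\omega_k=0$ and $\partial\bar\omega_k=\frac{i}{2 y_k^2}\,dw_k\wedge d\bar w_k\ne 0$, so $\bar\omega_k\in\ker\partial\bar\partial$ represents a class in $H^{0,1}_A(X)$. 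To prove these classes are linearly independent, assume $\sum_k c_k\bar\omega_k=\partial f+\bar\partial g$ in $\mathcal{A}^{0,1}(X)$; since $\mathcal{A}^{-1,1}(X)=0$, the relation reduces to $\sum_k c_k\bar\omega_k=\bar\partial g$ for a smooth $g\colon X\to\mathbb{C}$. Using the identity $\bar\omega_k=\bar\partial(-2i\log y_k)$ on the universal cover, the function $h:=\tilde g+2i\sum_k c_k\log y_k$ is holomorphic and $\mathcal{O}_K$-invariant, hence constant by the density argument of the preceding paragraph. The residual $U$-invariance of $\tilde g$ then collapses to the numerical identities $\sum_k c_k\log\sigma_k(u)=0$ for every $u\in U$; admissibility of $U$ guarantees that the vectors $(\log\sigma_1(u),\ldots,\log\sigma_s(u))\in\mathbb{R}^s$ as $u$ varies over $U$ span $\mathbb{R}^s$, forcing $c_1=\cdots=c_s=0$.

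For the matching upper bound, the plan is to realize $X$ as a solvmanifold $\Gamma\backslash G$ (in the sense of Kasuya \cite{kasuya}) and invoke the corresponding identification of $H^{0,1}_A(X)$ with a finite-dimensional complex built from left-invariant $(0,1)$-forms on $G$. The space of invariant $(0,1)$-forms has dimension $s+t$: besides $\bar\omega_1,\ldots,\bar\omega_s$, it contains the forms $\bar\tau_j=d\bar z_{s+j}/\prod_k y_k^{\bar a_{j,k}}$ for $j=1,\ldots,t$, where the complex exponents $a_{j,k}$ are determined by the characters $\chi_j\colon\mathbb{R}_+^s\to\mathbb{C}^*$ governing the $U$-action on $\mathbb{C}^t$ via $\chi_j(\sigma_1(u),\ldots,\sigma_s(u))=\sigma_{s+j}(u)$. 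The resulting structure equations read $d\omega_k=(i/2)\,\omega_k\wedge\bar\omega_k$ and $d\tau_j=\sum_k\frac{a_{j,k}}{2i}\,\tau_j\wedge(\omega_k-\bar\omega_k)$, and a direct calculation shows that $\partial\bar\partial\bar\tau_j\ne 0$ for genuine OT data, the obstruction being a nonzero combination of $\bar\tau_j\wedge\omega_k\wedge\bar\omega_l$ weighted by the exponents $a_{j,k}$, which cannot all vanish. Hence only the $\bar\omega_k$ survive in the invariant Aeppli complex, matching the lower bound. The main obstacle in writing this out in full detail is justifying the reduction to the invariant subcomplex for Aeppli cohomology—no off-the-shelf Nomizu-type theorem is available in this generality—but the explicit solvmanifold structure of OT manifolds together with Kasuya's finite-dimensional model makes the reduction rigorous in this specific setting.
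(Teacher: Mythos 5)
Your route is entirely different from the paper's, which obtains both numbers by quoting \cite[Corollary 11]{ados} (reducing $h^{0,1}_{BC}$ and $h^{0,1}_{A}=h^{n,n-1}_{BC}$ to Dolbeault numbers of $X$) together with \cite[Proposition 2.4]{ot} and a short combinatorial evaluation of the multiplicities $\rho_{p,m}$ from \cite{ados,apv}. Your first two steps are essentially sound: the identification $H^{1,0}_{BC}(X)\subseteq H^{0}(X,\Omega^1_X)=0$ is correct, the forms $d\bar w_k/\mathrm{Im}\,w_k$ are indeed invariant, $\partial\bar\partial$-closed representatives, and the linear-independence argument via $\bar\omega_k=\bar\partial(-2i\log y_k)$ and the full rank of $u\mapsto(\log\sigma_1(u),\dots,\log\sigma_s(u))$ is a nice, genuinely more explicit proof of the lower bound than anything in the paper. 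One local caveat: twice you invoke ``a holomorphic function invariant under a dense subgroup of translations in each coordinate is constant,'' but the deck transformations translate all coordinates \emph{simultaneously} by $(\sigma_1(a),\dots,\sigma_{s+t}(a))$, so invariance under a dense subgroup of each single factor does not follow. The correct statement (holomorphic functions on $\mathbb{H}^s\times\mathbb{C}^t$ invariant under the full-rank lattice $\sigma(\mathcal{O}_K)$ are constant, and there are no nonzero invariant holomorphic $1$-forms) is exactly \cite[Proposition 2.4]{ot} and requires the Fourier-expansion/Liouville argument given there; you should cite it rather than the density heuristic.

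The genuine gap is the upper bound $h^{0,1}_A(X)\le s$. You compute a plausible invariant model and then state yourself that ``no off-the-shelf Nomizu-type theorem is available in this generality'' for the Aeppli cohomology of these solvmanifolds. That missing reduction is not a technicality: for general solvmanifolds the inclusion of the invariant subcomplex is \emph{not} a quasi-isomorphism, and establishing the finite-dimensional model for the Bott--Chern/Aeppli cohomology of Oeljeklaus--Toma manifolds is precisely the content of \cite[Corollary 11]{ados}, i.e.\ the result the paper's proof rests on. Asserting that ``the explicit solvmanifold structure together with Kasuya's model makes the reduction rigorous'' without carrying it out leaves the hardest half of the lemma unproved. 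In addition, your claim that $\partial\bar\partial\bar\tau_j\neq 0$ because ``the exponents $a_{j,k}$ cannot all vanish'' needs an argument: it amounts to showing that no complex place restricted to $U$ has modulus identically $1$, which is exactly the nonvanishing of the quantities controlled by the $\rho_{p,m}$ in \cite{apv} (the paper uses $\rho_{2,0}=0$ from \cite[page 2419]{apv} at the corresponding point). As written, the proposal proves $h^{0,1}_{BC}(X)=0$ and $h^{0,1}_A(X)\ge s$, but not $h^{0,1}_A(X)\le s$; to close it you must either prove the invariant-model theorem for Aeppli cohomology in this setting or fall back on \cite{ados} as the paper does.
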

\begin{proof}
By \cite[Corollary 11]{ados}, we obtain
$$
h^{0,1}_{BC}(X)=h^{1,0}_{\bar\partial}(X)=0,
$$
where the last equality was proved in \cite[Proposition 2.4]{ot}. 
 
By duality, from the same \cite[Corollary 11]{ados}, we also have 
$$
h^{0,1}_{A}(X)=h^{n,n-1}_{BC}(X)=h^{n,n-1}_{\bar\partial}(X)+h^{n-2,n}_{\bar\partial}(X).
$$
We will show that $h^{n,n-1}_{\bar\partial}(X)=s$ and $h^{n-2,n}_{\bar\partial}(X)=0.$

Let  
 $$
 \rho_{p,m}:=\#\{I\subseteq \{1,,\dots,s+t\}, J\subseteq  \{s+1,\dots,s+t\}\,
 \vert \, |I|=p, \|J|=m, \sigma_I\bar\sigma_J=1\},
 $$
 where for a subset $K\subseteq\{1,\dots,s+t\}, \, \sigma_K: =\prod_{i\in K}\sigma_i.$

By \cite[(18)]{ados} we have 
\begin{equation}
\label{d-ados}
h^{n,n-1}_{\bar\partial}(X)=\sum_{l=0}^{n-1}\binom{s}{l}\cdot \rho_{n,n-1-l}.
\end{equation}
Notice now that $n=s+t$ and $\rho_{i,j}=0$ if $j>t$ while $\rho_{n,t-1}=0,$ and so 
$$
h^{n,n-1}_{\bar\partial}(X)=\binom{s}{s-1}\cdot \rho_{n,t}=s,
$$
as $\rho_{n,t}=1.$ Similarly, from (\ref{d-ados}), we find 
$$
h^{n-2,n}_{\bar\partial}(X)=\binom{s}{s}\cdot \rho_{n-2,t}=\rho_{2,0}.
$$
However, by \cite[page 2419]{apv}, we have $\rho_{2,0}=0,$ and so 
$$
h^{0,1}_{A}(X)=h^{n,n-1}_{BC}(X)=s.
$$
\end{proof} 
  
As a consequence of Theorem \ref{c-ineq} and  Lemma \ref{otbc}, for $s\geq 2$ we obtain a 
new proof of a recent result of Angella, Dubickas, Otiman and Stelzig \cite[Corollary 5]{ados}.

\begin{cor}
The Oeljeklaus-Toma manifolds of type $(s,t)$ with  $s\geq 2$ do not admit astheno-K\"ahler 
metrics.
\end{cor}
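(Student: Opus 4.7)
The plan is to combine the cohomological obstruction from Theorem \ref{c-ineq} with the explicit computation in Lemma \ref{otbc}, in a manner entirely analogous to the Nakamura case treated just before. Concretely, I would argue by contradiction: suppose $X = X(K,U)$ is an Oeljeklaus-Toma manifold of type $(s,t)$ with $s \geq 2$ that carries an astheno-K\"ahler metric.

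First, I would invoke Lemma \ref{otbc} to read off the two relevant cohomological invariants, namely $h^{0,1}_{BC}(X) = 0$ and $h^{0,1}_A(X) = s$. Next, I would apply Theorem \ref{c-ineq}, which under the astheno-K\"ahler hypothesis forces
\begin{equation*}
h^{0,1}_A(X) \leq h^{0,1}_{BC}(X) + 1.
\end{equation*}
Substituting the two values yields $s \leq 0 + 1 = 1$, contradicting the assumption $s \geq 2$. Hence no such metric exists.

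There is essentially no obstacle here, since all the hard work has been outsourced: the analytic input (the inequality $h^{0,1}_A \leq h^{0,1}_{BC} + 1$) comes from Theorem \ref{c-ineq}, whose proof uses Gauduchon's theorem and the Hodge-theoretic argument via the functional $L$ in Section \ref{g-arg}, while the arithmetic/geometric input (the actual Hodge numbers of $X$) is supplied by Lemma \ref{otbc}, which relies on the solvmanifold description due to Kasuya and the computations of \cite{ados}. The only care required is to state the contradiction cleanly and to note that the same argument recovers, with no additional effort, the result \cite[Corollary 5]{ados} of Angella, Dubickas, Otiman and Stelzig, now derived from our new obstruction rather than from earlier techniques.
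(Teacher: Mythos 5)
Your argument is exactly the paper's: the corollary is stated there as an immediate consequence of Theorem \ref{c-ineq} combined with Lemma \ref{otbc}, via precisely the substitution $s = h^{0,1}_A(X) \leq h^{0,1}_{BC}(X)+1 = 1$. The proposal is correct and takes essentially the same approach.
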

 
\begin{rmk}
The Oeljeklaus-Toma manifolds with $s=1$ do not  admit astheno-K\"ahler, as well  
\cite[Corollary 5]{ados}.
\end{rmk}

%%%%%%%%%%%%%%%%%%%%%%%%%%%%%%%%%%%%%%%%%%
%%%%%%%%%%%%%%%%%%%%%%%%%%%%%%%%%%%%%%%%%%
%%%%%%%%%%%%%%%%%%%%%%%%%%%%%%%%%%%%%%%%%%

\subsection{Cartesian products} 
\label{c-prod}

A direct application of our obstruction and of the weak K\"unneth formula 
is the following observation:
\begin{prop}
Let $X$ and $Y$ be two astheno-K\"ahler manifolds saturating the upper bound 
in (\ref{ineq}). Then $X\times Y$ does not admit astheno-K\"ahler metrics.
\end{prop}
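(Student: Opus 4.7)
The plan is to combine Theorem \ref{c-ineq} (the obstruction) with Theorem \ref{weak-kunneth} (the weak K\"unneth formula) to derive a numerical contradiction, assuming for the sake of argument that $X\times Y$ admits an astheno-K\"ahler metric. The key observation is the asymmetry between the two statements in \eqref{wk}: we get an \emph{equality} for Bott--Chern but only an \emph{inequality in the wrong direction} for Aeppli, and this asymmetry is exactly what prevents the upper bound in \eqref{ineq} from being preserved under products when both factors already saturate it.

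More precisely, by hypothesis we have
\begin{equation*}
h^{0,1}_A(X)=h^{0,1}_{BC}(X)+1\qquad\text{and}\qquad h^{0,1}_A(Y)=h^{0,1}_{BC}(Y)+1.
\end{equation*}
Applying Theorem \ref{weak-kunneth} and adding these two equalities, I would compute
\begin{equation*}
h^{0,1}_A(X\times Y)\;\geq\; h^{0,1}_A(X)+h^{0,1}_A(Y)\;=\;h^{0,1}_{BC}(X)+h^{0,1}_{BC}(Y)+2\;=\;h^{0,1}_{BC}(X\times Y)+2.
\end{equation*}
If $X\times Y$ carried an astheno-K\"ahler metric, Theorem \ref{c-ineq} would force $h^{0,1}_A(X\times Y)\leq h^{0,1}_{BC}(X\times Y)+1$, directly contradicting the display above.

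Since both ingredients (Theorems \ref{c-ineq} and \ref{weak-kunneth}) are already established earlier in the paper, there is no genuine obstacle in the argument itself; the whole content of the proposition is this short numerical comparison. The only step worth double-checking is that the hypothesis ``$X$ and $Y$ saturate the upper bound'' is being used with the convention $h^{0,1}_A=h^{0,1}_{BC}+1$ (as in \eqref{ineq}), which is indeed the case, and that the weak K\"unneth inequality for Aeppli cohomology goes in the direction needed ($\geq$), as proved in Theorem \ref{weak-kunneth}. No further computation or new geometric input is required.
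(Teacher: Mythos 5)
Your argument is correct and is essentially identical to the paper's proof: both combine the Aeppli inequality and the Bott--Chern equality from Theorem \ref{weak-kunneth} with the saturation hypothesis to get $h^{0,1}_A(X\times Y)\geq h^{0,1}_{BC}(X\times Y)+2$, contradicting Theorem \ref{c-ineq}. Nothing is missing.
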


\proof 
As a consequence of Theorem \ref{weak-kunneth} we find 
\begin{align*}
h^{0,1}_A(X\times Y)\geq &\, h^{0,1}_A(X)+h^{0,1}_A(Y)\\
= &\, h^{0,1}_{BC}(X)+h^{0,1}_{BC}(Y)+2\\
=& \, h^{0,1}_{BC}(X\times Y)+2.
\end{align*}
According to Theorem \ref{c-ineq}, this prohibits the existence of an astheno-K\"ahler 
metric on $X\times Y.$
\qed

\begin{cor}
\label{obs-prod-surf}
Let $S_1$ and $S_2$ be two non-K\"ahler surfaces. Then $S_1\times S_2$ does not 
admit astheno-K\"ahler metrics.
\end{cor}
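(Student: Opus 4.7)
The plan is to reduce the Corollary to the immediately preceding Proposition by verifying that each compact non-K\"ahler complex surface $S_i$ is simultaneously astheno-K\"ahler and saturates the upper bound in \eqref{ineq}. Both points are already recorded in Remark \ref{surfaces}. First, since $\dim_\CC S_i=2$, the equality $\omega^{n-2}=\omega^0=1$ holds for any Hermitian form $\omega$ on $S_i$, and thus $\partial\bar\partial\omega^{n-2}=0$ is automatic; so every compact complex surface is astheno-K\"ahler in a trivial way.

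Next I would verify the saturation $h^{0,1}_A(S_i)=h^{0,1}_{BC}(S_i)+1$ for a non-K\"ahler surface, following the chain of identifications indicated in Remark \ref{surfaces}. Kodaira's classical result \cite{kodaira} gives $h^{0,1}_{\bar\partial}=h^{1,0}_{\bar\partial}+1$ on any non-K\"ahler compact complex surface, while on an arbitrary compact complex surface one has the coincidences $h^{0,1}_{BC}=h^{1,0}_{BC}=h^{1,0}_{\bar\partial}$ and $h^{0,1}_{\bar\partial}=h^{0,1}_A$. Combining these equalities yields the saturation of the upper bound for each $S_i$.

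With both hypotheses in hand, the preceding Proposition applies verbatim to $X=S_1$ and $Y=S_2$: Theorem \ref{weak-kunneth} gives
$$
h^{0,1}_A(S_1\times S_2)\geq h^{0,1}_A(S_1)+h^{0,1}_A(S_2)=h^{0,1}_{BC}(S_1)+h^{0,1}_{BC}(S_2)+2=h^{0,1}_{BC}(S_1\times S_2)+2,
$$
which is incompatible with the upper bound of Theorem \ref{c-ineq} should $S_1\times S_2$ carry an astheno-K\"ahler metric. I expect no real obstacle here: the argument merely packages together Remark \ref{surfaces} with the preceding Proposition, and the only nontrivial inputs are the classical $h^{0,1}_{\bar\partial}-h^{1,0}_{\bar\partial}=1$ identity on non-K\"ahler surfaces and the weak K\"unneth formula of Theorem \ref{weak-kunneth}.
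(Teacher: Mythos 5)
Your argument is correct and is exactly the route the paper takes: surfaces are trivially astheno-K\"ahler, non-K\"ahler surfaces saturate the upper bound of \eqref{ineq} by the identities collected in Remark \ref{surfaces}, and then the preceding Proposition (via Theorem \ref{weak-kunneth} and Theorem \ref{c-ineq}) yields the contradiction. No gaps; the computation $h^{0,1}_A(S_1\times S_2)\geq h^{0,1}_{BC}(S_1\times S_2)+2$ is precisely the paper's.
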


However, more is true. In \cite{lyz}, Li, Yau and Zheng noticed that a source 
of astheno-K\"ahler manifolds is provided by the products of the form $C\times S,$ 
where $C$ is a compact Riemann surface and $S$ is a compact, complex, 
non-K\"ahler surface. We generalize here their method of constructing 
astheno-K\"ahler manifolds. The examples produced will shed a light on the 
sharpness of the bounds in (\ref{ineq}).
 
\begin{prop}
 \label{astheno-simple}
Let  $S$ be a compact complex surface, $X$ a compact K\"ahler manifold of 
complex dimension $n.$ If $Y$ denotes the blow-up of $X\times S$ along a smooth 
submanifold, then $Y$ is an astheno-K\"ahler manifold and 
\begin{itemize}
\item[ 1)] $h^{0,1}_A(Y)= h^{0,1}_{BC}(Y)+1$
if and only if $S$ is non-K\"ahler.
\item[ 2)] $h^{0,1}_A(Y)= h^{0,1}_{BC}(Y)$
if and only if $S$ is K\"ahler.
\end{itemize}
\end{prop}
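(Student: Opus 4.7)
The strategy is to combine four already-established tools: Gauduchon's theorem applied to $S$, the Fino--Tomassini blow-up principle (Theorem \ref{fino-tomassini}), the blow-up invariance of $h^{0,1}_{BC}$ and $h^{0,1}_{A}$ (Corollary \ref{invariance}), and the weak K\"unneth formula (Theorem \ref{weak-kunneth}); the two equivalences are then forced by the sharp upper bound of Theorem \ref{c-ineq}.

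First I would fix a K\"ahler form $\omega_X$ on $X$ and choose a Gauduchon metric $\omega_S$ on $S$, which on a surface amounts to $\ddbar\omega_S=0$, and by top-degree reasons also $\ddbar\omega_S^2=0$. For the product metric $\omega=p^*\omega_X+q^*\omega_S$ on $X\times S$, every summand of $\omega^k$ containing a factor $q^*\omega_S^j$ with $j\geq 3$ vanishes, so using that $p^*\omega_X$ is $d$-closed one obtains
$$\ddbar\omega=0,\qquad \ddbar\omega^2=0,\qquad \ddbar\omega^n=0,$$
where $\dim_{\CC}(X\times S)=n+2$. Thus $X\times S$ carries an astheno-K\"ahler metric satisfying condition (\ref{ft-condition}), and Theorem \ref{fino-tomassini} promotes these properties to the blow-up $Y$, establishing the astheno-K\"ahler claim.

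For the cohomological part, Corollary \ref{invariance} gives $h^{0,1}_{BC}(Y)=h^{0,1}_{BC}(X\times S)$ and $h^{0,1}_{A}(Y)=h^{0,1}_{A}(X\times S)$, while Theorem \ref{weak-kunneth} supplies
$$h^{0,1}_{BC}(X\times S)=h^{0,1}_{BC}(X)+h^{0,1}_{BC}(S),\qquad h^{0,1}_{A}(X\times S)\geq h^{0,1}_{A}(X)+h^{0,1}_{A}(S).$$
Since $X$ is K\"ahler, the $\ddbar$-lemma forces $h^{0,1}_{A}(X)=h^{0,1}_{BC}(X)$. I would then split into cases. If $S$ is K\"ahler, then $X\times S$ is K\"ahler and so is $Y$ (blow-ups of K\"ahler manifolds along smooth submanifolds remain K\"ahler), and the $\ddbar$-lemma on $Y$ yields $h^{0,1}_{A}(Y)=h^{0,1}_{BC}(Y)$. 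If $S$ is non-K\"ahler, Remark \ref{surfaces} gives $h^{0,1}_{A}(S)=h^{0,1}_{BC}(S)+1$, and chaining together the identities above produces
$$h^{0,1}_{A}(Y)\geq h^{0,1}_{BC}(Y)+1,$$
which, combined with the upper bound of Theorem \ref{c-ineq} applied to the astheno-K\"ahler manifold $Y$, forces equality.

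The only delicate step is the simultaneous verification of the astheno-K\"ahler and two pluriclosedness conditions for the product metric; this is where the dimension constraint $\dim_{\CC}S=2$ does the work, annihilating all would-be obstructing terms on the surface factor. The conceptual punchline is that the weak K\"unneth formula, which for Aeppli cohomology only provides an inequality, is rigidified into an equality precisely by the sharp upper bound of Theorem \ref{c-ineq} once astheno-K\"ahlerness has been secured via Fino--Tomassini.
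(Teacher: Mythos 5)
Your proposal is correct and follows essentially the same route as the paper's own proof: the product metric $\omega_X+\omega_S$ with $\omega_X$ K\"ahler and $\omega_S$ Gauduchon, verified to satisfy (\ref{ft-condition}) and the astheno-K\"ahler condition before applying Theorem \ref{fino-tomassini}, then Corollary \ref{invariance} plus Theorem \ref{weak-kunneth} combined with Remark \ref{surfaces} and the sharp upper bound of Theorem \ref{c-ineq} to pin down the cohomological dichotomy. No gaps.
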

\begin{proof} 
By Theorem \ref{fino-tomassini}, to prove that $Y$ carries an astheno-K\"ahler 
metric it suffices to show that the product manifolds $S\times X$ carries an 
astheno-K\"ahler metric satisfying the condition (\ref{ft-condition}). Let $\omega_X$ 
be a  K\"ahler metric on $X$ and $\omega_S$ be a Gauduchon metric on $S.$ 
Notice that such a metric $\omega_S$ exists on arbitrary compact complex manifolds 
\cite{gauduchon}. Consider now the Hermitian metric  $\omega=\omega_X+\omega_S$ 
on $X\times S,$ where by abusing the notation, we omit the pull-back maps from the 
two factors. We compute
 \begin{equation}
 \label{basis}
 i\ddbar \omega^n= i\ddbar(\omega_X+\omega_S)^n
 = i\ddbar\omega_X^n+ni\ddbar (\omega_X^{n-1}\wedge\omega_S)+
 \binom{n}{2}i\ddbar(\omega_X^{n-2}\wedge\omega_S^2).
 \end{equation}
 Since $\omega_X$ is K\"ahler, we have 
 $\partial \omega_X=\bar \partial \omega_X=0,$ 
 which implies 
 $$i\ddbar (\omega_X^{n-1}\wedge\omega_S)=-\omega_X^{n-1}\wedge 
 (i\ddbar \omega_S)=0,
 $$ 
since $\omega_S$ is Gauduchon. Similarly,
$$
\ddbar(\omega_X^{n-2}\wedge\omega_S^2) = -\omega_X^{n-2}\wedge  
(i\ddbar \omega_S^2)=0,
$$ 
since $i\ddbar \omega_S^2=0$ is automatically satisfied on surfaces. Finally, the term 
$i\ddbar\omega_X^n$ vanishes for degree reasons. In conclusion, from (\ref{basis}) we 
see that $ i\ddbar \omega^n=0,$ which means $\omega$ is an astheno-K\"ahler metric. 
Also, the same arguments can be easily adapted to show $\ddbar \omega= \ddbar \omega^2=0,$ 
which shows that the metric $\omega$ satisfies the condition (\ref{ft-condition}). 

\smallskip
 
To prove claims 1) and 2), from  Corollary \ref{invariance} and Theorem 
\ref{weak-kunneth},  we find:
\begin{align}
\label{master}
  h^{0,1}_{BC}(Y)= & \, h^{0,1}_{BC}(X\times S)=h^{0,1}_{BC}(X)
  + h^{0,1}_{BC}(S) = q+ h^{0,1}_{BC}(S) \\ \notag
    h^{0,1}_{A}(Y)= & \, h^{0,1}_{A}(X\times S)\geq h^{0,1}_{A}(X)
  + h^{0,1}_{A}(S) = q+ h^{0,1}_{A}(S)
\end{align}
where $q=h^{0,1}_{\bar\partial}(X)=h^{0,1}_{BC}(X)=h^{0,1}_{A}(X)$ is the 
irregularity of the K\"ahler manifold $X.$ Therefore, if $h^{0,1}_A(Y)= h^{0,1}_{BC}(Y)$ 
then $h^{0,1}_{A}(S)=h^{0,1}_{BC}(S).$ As in Remark \ref{surfaces}, 
the latter is equivalent to $S$ being a K\"ahler surface.  Notice now that 
if $S$ is K\"ahler, then $X\times S$ is K\"ahler, and so $Y$ is K\"ahler. 
That implies $h^{0,1}_A(Y)= h^{0,1}_{BC}(Y),$ proving not only claim 1), but also 
one direction of claim 2). Indeed, if $h^{0,1}_A(Y)= h^{0,1}_{BC}(Y)+1$ then 
$S$ is necessarily non-K\"ahler. Conversely, if $h^{0,1}_{A}(S)=h^{0,1}_{BC}(S)+1,$ 
by (\ref{master}) $h^{0,1}_A(Y)\geq h^{0,1}_{BC}(Y)+1.$ However, since $Y$ carries 
an astheno-K\"ahler metric, by Theorem \ref{c-ineq}, we must have 
$h^{0,1}_A(Y)=h^{0,1}_{BC}(Y)+1.$ 
\end{proof}
 \begin{proof}[Proof of Theorem \ref{cart-surfaces}] 
The result is a direct consequence of Proposition \ref{astheno-simple} 
and Corollary \ref{obs-prod-surf}.
\end{proof}
\begin{rmk}
\label{non-jy}
We notice here that the Jost-Yau obstruction criterion does not provide any information 
in proving that a Cartesian product of two non-K\"ahler surfaces does not carry 
astheno-K\"ahler metrics, emphasizing the role played by Theorem \ref{c-ineq} 
in the proof of Theorem \ref{cart-surfaces}. Indeed, let $S_1$ and $S_2$ be 
two non-K\"ahler surfaces and $\phi$ a holomorphic $1$-form on $S_1\times S_2.$  
Since $\partial \alpha=0,$ then $\alpha $ is an element of the Dolbeault 
cohomology group $H^{1,0}_{\bar\partial}(X\times Y).$ Using the K\"unneth 
formula \cite[page 105]{gh}, there exist holomorphic $1$-forms $\alpha_1$ 
and $\alpha_2$ on $S_1$ and  $S_2,$ respectively, such that 
$\phi=\alpha_1+\alpha_2.$ However, on surfaces, every holomorphic $1$-form 
is closed as it can be seen from the Jos-Yau criterion, or its proof. Therefore 
$d\alpha_1=0$ and $d\alpha_2=0,$ which imply 
$d\phi=d(\alpha_1+\alpha_2)=d\alpha_1+d\alpha_2=0.$  
\end{rmk}

\begin{rmk}
It is interesting to notice that the product of either two Kodaira surfaces, 
two Inoue surfaces, ora Kodaira surface and a Inoue surface, while it does 
not carry any astheno-K\"ahler metric, it does  admit SKT metrics 
\cite[Theorem 7.5]{st}.
\end{rmk}

%%%%%%%%%%%%%%%%%%%%%%%%%%%%%%%%%%%%%%%%%%%%%%%%%%%%%%
%%%%%%%%%%%%%%%%%%%%%%%%%%%%%%%%%%%%%%%%%%%%%%%%%%%%%%
%%%%%%%%%%%%%%%%%%%%%%%%%%%%%%%%%%%%%%%%%%%%%%%%%%%%%%
%%%%%%%%%%%%%%%%%%%%%%%%%%%%%%%%%%%%%%%%%%%%%%%%%%%%%%

 \section{Saturating the inequality (\ref{ineq})}
 \label{examples}

The results presented earlier show that the upper bound in (\ref{ineq}) is sharp: 
the Cartesian product of a compact, complex, non-K\"ahler surface and a compact 
K\"ahler manifold admits astheno-K\"ahler metrics and saturates the upper bound 
in (\ref{ineq}). We will exhibit next another class of non-K\"ahler manifolds which do 
not carry  astheno-K\"ahler metrics, the class of Vaisman manifolds. The diagonal Hopf 
manifolds are examples of  Vaisman manifolds.

%%%%%%%%%%%%%%%%%%%%%%%%%%%%%%%%%%%%%%%%%%
%%%%%%%%%%%%%%%%%%%%%%%%%%%%%%%%%%%%%%%%%%
%%%%%%%%%%%%%%%%%%%%%%%%%%%%%%%%%%%%%%%%%%

\subsection{Vaisman manifolds} 

Locally conformally K\"ahler (LCK) manifolds and, in particular, the Vaisman manifolds 
were introduced in the 70's by I. Vaisman, and have been given a lot of attention in the 
recent years. We will recall next some of the relevant results.

\smallskip

Let $M$ be a complex manifold. A Hermitian metric on $M$ is a locally conformally 
K\"ahler (LCK) metric if its fundamental form $\omega$ satisfies the condition 
$d\omega=\theta \wedge \omega$ for some non-zero $1$-form $\theta.$ 
The $1$-form $\theta$ is called the Lee form, and the pair $(M,\omega)$ is called a 
LCK manifold. If $\dim M\, \geq 3,$ the Lee form is closed. We will decompose the  
$1$-form $\theta$  as $\theta = \alpha + \bar \alpha$ 
where $\alpha$ is a $(1,0)$-form. Since $d\theta=0,$ we  find  $\partial \alpha=0$ and 
$\bar\partial \alpha=-\partial \bar\alpha.$

\begin{defn}
A LCK manifold $(M, \omega)$ is Vaisman if $\nabla \theta=0,$ where $\nabla$ 
denotes the Levi-Civita connection associated to $\omega,$ and $\theta$ is the 
Lee form.
\end{defn}

\smallskip

We will recall next several results on Vaisman manifolds:

\begin{thm}
\label{collect}
Let $(M,\omega)$ be a compact Vaisman manifold. 
\begin{itemize}
\item[ 1)] (Vaisman, \cite{vaisman}) There exists a unique metric in its conformal class 
such that $|\theta|_\omega=1.$ 
\item[ 2)] (Verbitsky, \cite[Section 6]{vanishing}) If the Lee form $\theta$ satisfies  $|\theta|_\omega=1,$ 
then 
\begin{equation}
\label{omega}
\omega=2i\bar\partial\alpha+2i\alpha\wedge\bar \alpha,
\end{equation}
and all eigenvalues 
of the $(1,1)$-form $\bar\partial\alpha$ are positive, except one 
which is equal to zero\footnote{The eigenvalues of a $(1,1)$-form 
$\phi$ are the eigenvalues of the symmetric operator $G(\phi)$ defined by the
equation $\phi(v, Jv) = g(G(\phi)v, w),$ where $J$ is the ambient  
complex structure, and $g$ is a background Hermitian metric.}.
\end{itemize}
\end{thm}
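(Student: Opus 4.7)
The plan is to handle parts (1) and (2) in turn, building on the defining property $\nabla\theta=0$ of the Vaisman structure.

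\emph{Part (1).} The condition $\nabla\theta = 0$ forces $d|\theta|^2_\omega = 0$, so $|\theta|_\omega$ is a positive constant $c$. Rescaling by the constant factor $\omega' = c^2\,\omega$ leaves the Levi-Civita connection (hence the Vaisman structure) unchanged and normalizes $|\theta|_{\omega'} = 1$. Uniqueness in the conformal class is a consequence of the rigidity of the parallelism condition: any other Vaisman representative $\omega'' = e^{2f}\omega$ would require $\theta + 2df$ to be parallel for the conformally changed Levi-Civita connection, forcing $df$ itself to be parallel and of zero length and hence $f$ a constant, which is then pinned down by the unit-norm condition.

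\emph{Part (2), the geometric identity.} Write $U = \theta^\sharp$ for the Lee vector field. The parallelism $\nabla U = 0$ makes $U$ Killing, and a standard fact in Vaisman geometry is that $U$ is also holomorphic ($L_UJ = 0$); together these yield $L_U\omega = 0$. Cartan's magic formula combined with $d\omega = \theta\wedge\omega$ then gives
$$
0 = L_U\omega = d(i_U\omega) + (i_U\theta)\,\omega - \theta\wedge i_U\omega.
$$
From $\omega(V,W) = g(JV,W)$ and the $J$-antisymmetry of $g$, one computes $i_U\omega = -J\theta$, while $i_U\theta = |\theta|^2_\omega = 1$. Rearranging produces the compact identity $\omega = d(J\theta) - \theta\wedge J\theta$.

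\emph{Part (2), bidegree translation and eigenvalues.} Writing $\theta = \alpha + \bar\alpha$ with $\alpha$ of type $(1,0)$, one has $J\theta = i(\alpha-\bar\alpha)$. Decomposing $d\theta = 0$ by bidegree forces $\partial\alpha = 0$, $\bar\partial\bar\alpha = 0$, and $\bar\partial\alpha = -\partial\bar\alpha$, so $d(J\theta) = 2i\bar\partial\alpha$, while a direct expansion yields $-\theta\wedge J\theta = 2i\,\alpha\wedge\bar\alpha$; substituting produces $\omega = 2i\bar\partial\alpha + 2i\,\alpha\wedge\bar\alpha$. For the spectral claim, fix a point $p\in M$ and a unitary $(1,0)$-coframe $\{\epsilon^1,\dots,\epsilon^n\}$ at $p$ adapted so that $\alpha$ is a scalar multiple of $\epsilon^1$. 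The normalization $|\theta|_\omega = 1$ forces $|\alpha|^2_\omega = 1/2$, hence $\alpha\wedge\bar\alpha = \tfrac12\,\epsilon^1\wedge\bar\epsilon^1$, while $\omega = i\sum_{j=1}^n\epsilon^j\wedge\bar\epsilon^j$; substituting back yields $2i\bar\partial\alpha = i\sum_{j\ge 2}\epsilon^j\wedge\bar\epsilon^j$, exhibiting $\bar\partial\alpha$ as positive semi-definite of rank $n-1$, with the single zero eigenvalue occurring in the $\epsilon^1$-direction.

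The only genuinely delicate input in this plan is the holomorphicity of the Lee vector field $U$, which reduces (given $\nabla U = 0$) to verifying $\nabla_UJ = 0$; I would either cite this from the standard Vaisman literature or derive it by combining $\nabla U = 0$ with the integrability of $J$ and the LCK identity $d\omega = \theta\wedge\omega$. Modulo that point, the proof reduces to the elementary Cartan-formula manipulation and the linear-algebra calculation in a unitary frame displayed above.
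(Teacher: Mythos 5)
The paper itself does not prove Theorem \ref{collect}: it is recalled from the literature, with part 1) attributed to Vaisman \cite{vaisman} and part 2) to Verbitsky \cite{vanishing}, and then used as a black box in the proof of Theorem \ref{vaisman}. So your proposal must be judged on its own merits rather than against a proof in the paper.

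Your part 2) is correct and follows the standard route. From $\nabla U=0$ one gets $L_U g=0$, and combined with the holomorphy $L_U J=0$ of the Lee field --- a genuinely standard fact of Vaisman structure theory, which you correctly flag as the only non-elementary input and which it is legitimate to cite --- one gets $L_U\omega=0$. Cartan's formula together with $d\omega=\theta\wedge\omega$, $i_U\omega=-J\theta$ and $i_U\theta=|\theta|^2_\omega=1$ gives $\omega=d(J\theta)-\theta\wedge J\theta$, and the bidegree bookkeeping ($J\theta=i(\alpha-\bar\alpha)$, $\partial\alpha=0$, $\bar\partial\alpha=-\partial\bar\alpha$) turns this into (\ref{omega}). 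The spectral claim is then pointwise linear algebra, exactly as you say: $2i\bar\partial\alpha=\omega-2i\alpha\wedge\bar\alpha$, and $2i\alpha\wedge\bar\alpha$ is positive semi-definite of rank one whose nonzero eigenvalue relative to $\omega$ equals $|\theta|^2_\omega=1$, so $2i\bar\partial\alpha$ has eigenvalues $0,1,\dots,1$ relative to $\omega$.

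Part 1) has a genuine gap in the uniqueness. First, the scope of your argument is too narrow: every metric $e^{2f}\omega$ in the conformal class is LCK, with Lee form $\theta+2df$, and the statement asserts that exactly one metric among all of these has unit-norm Lee form; you only compare against other \emph{Vaisman} representatives. Second, even in that restricted setting, the step ``$\theta+2df$ parallel for the Levi-Civita connection of $e^{2f}\omega$ forces $df$ parallel and of zero length'' is unjustified: the two parallelism hypotheses refer to two different connections, and they cannot be subtracted or compared termwise; nothing you wrote supplies this implication. The full statement does have a short proof, but it uses compactness via the maximum principle rather than parallelism: the condition $|\theta+2df|_{e^{2f}\omega}=1$ is equivalent to $|\theta+2df|_\omega=e^{f}$; at a point where $f$ attains its maximum one has $df=0$, hence $e^{\max f}=|\theta|_\omega=c$, and likewise $e^{\min f}=c$ at a minimum point, so $f\equiv\log c$ and $c^2\omega$ is the unique normalized metric (this argument also re-proves existence). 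Alternatively, since a parallel Lee form is coclosed, a Vaisman metric is Gauduchon, and uniqueness up to a positive constant follows from the uniqueness of the Gauduchon representative in a conformal class \cite{gauduchon}, the constant then being fixed by the normalization.
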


\begin{proof}[Proof of Theorem \ref{vaisman}]  Let $M$ be a Vaisman manifold, 
with $\dim_\CC M=n\geq 3.$

\smallskip

The Bott-Chern cohomology of Vaisman manifolds was recently computed by 
Istrati and Otiman \cite{io}. In particular, from Theorem 4.2 in \cite{io}, 
and the duality between the Bott-Chern and Aeppli cohomologies 
$h^{0,1}_A=h^{n,n-1}_{BC}$ one can see that 
$$
h^{0,1}_A(M)=h^{0,1}_{BC}(M)+1.
$$

To prove that $M$ does not carry astheno-K\"ahler metrics, let $\omega$ be a LCK 
metric on $M,$ whose Lee form satisfies $|\theta|_\omega=1.$ 
Notice from Theorem \ref{collect}.2) that 
\begin{equation}
\label{agl-obs}
i\ddbar \omega=i\ddbar(2i\bar\partial\alpha+2i\alpha\wedge\bar \alpha)=
-2\bar\partial \alpha\wedge\partial \bar\alpha = 
-2(i\bar\partial \alpha)\wedge(i\bar\partial \alpha)\leq 0.
\end{equation}

Suppose now there exists an astheno-K\"ahler metric $\eta$  on $M.$ Using Stokes' 
theorem, we find:
\begin{equation*}
\label{std-argument}
0\geq \int_{M}\eta^{n-2} \wedge i\ddbar \omega 
=  \int_{M} i\ddbar \eta^{n-2} \wedge \omega=0
\end{equation*}
hence, $\ddbar\omega=0,$ which means $\omega$ is also a SKT metric. However, it was 
noticed by Alexandrov and Ivanov in \cite[Remark 1]{ai} (see also \cite[Theorem 1.3]{ip})  
that a  non-K\"ahler LCK metric of dimension at least $3$ cannot be SKT, which leads 
to a contradiction.
\end{proof}

\begin{rmk}
The authors are grateful to Alexandra Otiman for pointing out to them the relation 
between the Bott-Chern and Aeppli cohomology for $(0,1)$-forms in the case of 
Vaisman manifolds, and that Theorem \ref{vaisman} was also independently noticed 
by Angella, Otiman and Stanciu (unpublished). 
\end{rmk}

\begin{rmk}
The key observation (\ref{agl-obs}) in the above proof has also been recently 
noticed by Angella, Guedj and Lu \cite[Proposition 3.10]{agl} within the framework of 
LCK metrics with potential.
\end{rmk}

\begin{ex}
\label{lyz-conj}
Let $A\in GL(n,\CC)$ be a linear operator acting on $\CC^n$ with all eigenvalues 
$\lambda_i$ satisfying $|\lambda_i|>1.$ Denote by 
$\langle A\rangle \subseteq GL(n,\CC)$ 
the cyclic group generated by $A.$ The quotient 
$M=(\CC^n\setminus \{0\})/\langle A\rangle$ 
is called a linear Hopf manifold. If $A$ is diagonalizable, then $M$ is called a 
diagonal Hopf manifold. In \cite{ko}, Kamishima and Ornea proved that a diagonal 
Hopf manifold is Vaisman (see also \cite{ov}). As a consequence of Theorem
 \ref{vaisman}, no diagonal 
Hopf manifold $M$ carries an astheno-K\"ahler metric. This result can be used to 
confirm a conjecture of Li, Yau and Zheng regarding the class of similarity Hopf 
manifolds.
\begin{defn}
A compact complex manifold $X$ is called similarity Hopf manifold if and only if 
it is a finite undercover of a Hopf manifold 
$M=\left(\CC^n\setminus\{0\}\right)/\langle \phi\rangle,$ 
where $\phi(z) = azA, A\in U(n), {\bf z} = (z_1,\dots,z_n),$ and $a>1.$
\end{defn}
Li, Yau and Zheng conjectured in \cite[page 108]{lyz} that similarity Hopf manifolds 
cannot carry astheno-K\"ahler metrics. We obtain:
\begin{cor}
\label{lyz-answer}
There exists no astheno-K\"ahler metrics on similarity Hopf manifolds of dimension 
at least three.
\end{cor}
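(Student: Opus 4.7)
The plan is to reduce the statement to the non-existence result for diagonal Hopf manifolds already recorded in Example~\ref{lyz-conj}. Let $X$ be a similarity Hopf manifold of complex dimension $n\geq 3$. By the definition given just above, there is a finite covering $\pi\colon M\to X$, with $M=(\CC^n\setminus\{0\})/\langle \phi\rangle$ and $\phi(z)=azA$, where $A\in U(n)$ and $a>1$.

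The first step is to observe that $M$ is itself a diagonal Hopf manifold in the sense of Example~\ref{lyz-conj}. Since $A\in U(n)$, there exists $U\in U(n)$ with $D:=U^{-1}AU$ diagonal; conjugating by $U$ yields a biholomorphism of $\CC^n\setminus\{0\}$ that intertwines $\phi$ with the linear map $z\mapsto aDz$, whose matrix is diagonal with eigenvalues $a\mu_i$ satisfying $|a\mu_i|=a>1$. Hence $M$ is a diagonal Hopf manifold, and by the theorem of Kamishima and Ornea \cite{ko} it is Vaisman. Applying Theorem~\ref{vaisman} (which requires $\dim_\CC M=n\geq 3$) one concludes that $M$ carries no astheno-K\"ahler metric.

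To finish, I argue by contradiction. Suppose $X$ admits an astheno-K\"ahler metric $\omega$. Since $\pi$ is a finite unramified holomorphic covering it is a local biholomorphism, so $\pi^*\omega$ is a well-defined Hermitian metric on $M$ and $\pi^*$ commutes with $\partial$ and $\bar\partial$. Consequently
$$
\ddbar(\pi^*\omega)^{n-2}=\pi^*\bigl(\ddbar\omega^{n-2}\bigr)=0,
$$
so $\pi^*\omega$ is astheno-K\"ahler on $M$, contradicting the previous paragraph.

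No single step is genuinely delicate: the technical heart of the argument, the non-existence of astheno-K\"ahler metrics on Vaisman manifolds of dimension at least three, has already been settled by Theorem~\ref{vaisman}. What remains is the elementary observation that the unitary hypothesis on $A$ forces $M$ into the diagonal (hence Vaisman) class, combined with the routine pullback argument along a finite covering. Thus the corollary confirms the conjecture of Li, Yau and Zheng mentioned on \cite[page 108]{lyz}.
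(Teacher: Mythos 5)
Your proposal is correct and follows essentially the same route as the paper: pull back a hypothetical astheno-K\"ahler metric along the finite covering $\pi\colon M\to X$, observe that unitarity of $A$ makes $aA$ diagonalizable with all eigenvalues of modulus $a>1$ so that $M$ is a diagonal Hopf manifold, and then invoke the Vaisman property via \cite{ko} together with Theorem~\ref{vaisman}. The only cosmetic difference is that you make the conjugation to a diagonal matrix and the Vaisman step fully explicit, whereas the paper compresses these into a single sentence referring back to Example~\ref{lyz-conj}.
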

\begin{proof}
Suppose there exists a similarity Hopf manifold $X,\, \dim_\CC X= n\geq 3,$  
equipped with an astheno-K\"ahler metric $\omega.$ Let  
$
\pi:M\ra X
$ 
be an unramified finite covering, where 
$M=\left(\CC^n\setminus\{0\}\right)/\langle \phi\rangle,$ 
$\phi({\bf z}) = a{\bf z}A, A\in U(n), {\bf z} = (z_1,\dots,z_n),$ and $a>1.$ Notice that we have 
$$
\ddbar (\pi^*\omega)^{n-2}=\ddbar \pi^*\omega^{n-2}=\pi^*\ddbar\omega^{n-2}=0.
$$ 
That means $\pi^*\omega$ is an astheno-K\"ahler metric on the manifold $M.$ However, 
since the matrix $A$ is unitary, the matrix $aA$ is diagonalizable and for every eigenvalue  
$\lambda$ we have $|\lambda|=a>1.$ Therefore $M$ is a diagonal Hopf manifold, and 
such manifolds cannot carry astheno-K\"ahler metrics. 
\end{proof}

\end{ex}

\begin{rmk}
The standard Hopf manifold $M=\left(\CC^n\setminus\{0\}\right)/\langle \phi\rangle,$ 
where $\phi:\CC^n\ra\CC^n$  is a homothethy $\phi({\bf z})=a{\bf z},$ 
with $a\in \CC^*,\, |a|\neq 1,$   is an elliptic fiber bundle over $\CC\PP^{n-1}$ diffeomorphic 
to $S^1\times S^{2n-1}.$ As an example of a Vaisman 
manifold it does not carry astheno-K\"ahler metrics. It is interesting to notice that the 
closely related Calabi-Eckmann manifold, which is also an elliptic fiber bundle over 
$\CC\PP^{n-1}\times\CC\PP^{m-1}$ diffeomorphic to $S^{2n-1}\times S^{2m-1}$ 
admits astheno-K\"ahler metrics \cite{matsuo}, while satisfying $h^{0,1}_A=1$ and 
$h^{0,1}_{BC}=0$  for $1<n<m$ \cite{stelzig}.
\end{rmk}

We will conclude this paper by discussing  the saturation of the 
lower bound in (\ref{ineq}).

%%%%%%%%%%%%%%%%%%%%%%%%%%%%%%%%%%%%%%%%%%
%%%%%%%%%%%%%%%%%%%%%%%%%%%%%%%%%%%%%%%%%%
%%%%%%%%%%%%%%%%%%%%%%%%%%%%%%%%%%%%%%%%%%

 \subsection{A higher dimensional generalization of the completely solvable 
 Nakamura manifold}  
 
Our next example was first studied by Kasuya \cite{kasuya-hodge}. It generalizes 
I. Nakamura's example \cite[page 90]{nak} of a non-K\"ahler solvmanifold satisfying the 
$\ddbar$-lemma.

 \smallskip

Let $G=\CC\ltimes_\phi \CC^{2n},$ where 
$$
\phi(x+iy)(w_1,\dots,w_{2n})=(e^{a_1x}w_1,e^{-a_1x}w_2,
\dots,e^{a_nx}w_{2n-1},e^{-a_nx}w_{2n}),
$$  
where $a_i\neq0$ are integers. Notice we can write $G=\RR\times(\RR\ltimes_\phi \CC^{2n}).$ 
The group $G$ admits a co-compact lattice $\Gamma=t\ZZ\times \Delta,$  where $\Delta$ is a 
lattice in $\RR\ltimes_\phi \CC^{2n}$ for $t>0.$ For $t\neq r\pi$ for some $r\in \QQ,$ the complex 
manifold satisfies the Hodge symmetry and decomposition, but  $X=\Gamma\backslash G$ 
does not admits a K\"ahler metric \cite{kasuya-hodge}. In particular, we have 
$h^{0,1}_A(X)=h^{0,1}_{BC}(X).$ The interested reader may follow up the arguments in 
\cite{ak} to find that $h^{0,1}_A(X)=h^{0,1}_{BC}(X)=1.$

Consider now the $(1,1)$-form 
$$
\eta=\sqrt{-1}\left(dz\wedge d\bar z
+\sum_{i=1}^n (e^{-2a_ix}dw_{2i-1}\wedge d\bar w_{2i-1} 
+e^{2a_ix}dw_{2i}\wedge d\bar w_{2i})\right),
$$
on $X$ induced from $\CC\ltimes_\phi \CC^{2n}.$ Then one can see that $i\ddbar \eta$ 
is a weakly positive, $\ddbar$-exact non-vanishing $(2,2)$-current. By Theorem \ref{hl-obs}, 
it follows that $X$ cannot carry any astheno-K\"ahler metric.
 
 \begin{rmk}
 In \cite{fkv}, Fino, Kasuya and Vezzoni proved that  Kasuya's example above 
 cannot carry SKT metrics as well.
 \end{rmk}

%%%%%%%%%%%%%%%%%%%%%%%%%%%%%%%%%%%%%%%%%%
%%%%%%%%%%%%%%%%%%%%%%%%%%%%%%%%%%%%%%%%%%
%%%%%%%%%%%%%%%%%%%%%%%%%%%%%%%%%%%%%%%%%%

 \subsection{Fujiki class $\mathcal C$ manifolds of complex dimension three.} 
 
Manifolds satisfying the $\ddbar-$lemma automatically  satisfy $h^{0,1}_A=h^{0,1}_{BC}.$ 
In particular, manifolds of Fujiki class $\mathcal C,$ i.e., manifolds bimeromorphic to K\"ahler 
manifolds, satisfy such condition. In \cite{chiose}, the first author proved that a Fujiki class 
$\mathcal C$ manifold admits a SKT metric if and only if it is of K\"ahler type. In particular, 
any Fujiki class $\mathcal C$ manifold of complex dimension three which admits an 
astheno-K\"ahler metric is of K\"ahler type. As a consequence, we notice:

\begin{cor}
\label{modifications}
The class of astheno-K\"ahler manifolds is not invariant under modifications.
\end{cor}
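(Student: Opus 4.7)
The plan is to extract the corollary from the discussion in the preceding paragraph by exhibiting a concrete modification that takes a K\"ahler (hence astheno-K\"ahler) threefold to a non-K\"ahler Fujiki class $\mathcal C$ threefold, then invoking Chiose's theorem to rule out astheno-K\"ahler metrics on the latter.

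First I would recall the well-known fact that $n=3$ is exactly the dimension in which the astheno-K\"ahler condition $\partial\bar\partial\omega^{n-2}=0$ coincides with the SKT condition $\partial\bar\partial\omega=0$, since $n-2=1$. This lets me rephrase the application of Chiose \cite{chiose} already stated above: if $X$ is a compact complex threefold of Fujiki class $\mathcal C$ that carries an astheno-K\"ahler metric, then it carries an SKT metric, and thus $X$ is of K\"ahler type. Equivalently, any non-K\"ahler Fujiki class $\mathcal C$ threefold fails to be astheno-K\"ahler.

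Next I would invoke Hironaka's classical construction (or any of its standard variants) providing a smooth compact complex threefold $X$ which is a modification of a smooth projective threefold $Y$, and which is itself non-K\"ahler. Such an $X$ lies in Fujiki class $\mathcal C$ by construction (it is bimeromorphic to the projective, hence K\"ahler, manifold $Y$), and it is a modification of the K\"ahler manifold $Y$. By the observation of the previous paragraph, $X$ cannot carry any astheno-K\"ahler metric. On the other hand, $Y$ is K\"ahler and hence trivially astheno-K\"ahler. Therefore the modification $X \to Y$ (or $Y \dashrightarrow X$) produces a pair of bimeromorphic manifolds, one astheno-K\"ahler and the other not, proving that the class of astheno-K\"ahler manifolds is not preserved under modifications.

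The only nontrivial ingredient is the availability of Hironaka's example; the cohomological input is exactly Chiose's theorem together with the coincidence of the astheno-K\"ahler and SKT conditions in complex dimension three, both of which are already recorded in the preceding discussion. No further analytic or cohomological work is needed.
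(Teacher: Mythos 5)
Your proposal is correct and follows essentially the same route as the paper: it uses the coincidence of the astheno-K\"ahler and SKT conditions in complex dimension three together with Chiose's theorem on Fujiki class $\mathcal C$ manifolds, and then exhibits Hironaka's non-K\"ahler modification of a projective threefold as the counterexample. The paper's proof is exactly this argument, with Hironaka's example taken specifically as a proper modification of $\mathbb{CP}^3$.
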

\proof
Let $M$ be the $3$ dimensional manifold constructed by Hironaka \cite{hironaka} 
which is a proper modification of the projective space $\CC\PP^3$ and which contains 
a positive linear combination of curves which is homologuous to $0.$ In particular, 
$M$ is a non-K\"ahler Fujiki class $\mathcal C$ manifold, which, unlike $\CC\PP^3,$ 
cannot not carry an astheno-K\"ahler metric.
\qed

\begin{rmk}
The authors are not aware of an example of a non-K\"ahler manifold admitting 
an astheno-K\"ahler metric and satisfying $h^{0,1}_A(M)=h^{0,1}_{BC}(M)$. 
\end{rmk}

%%%%%%%%%%%%%%%%%%%%%%%%%%%%%%%%%%%%%%%%%%
%%%%%%%%%%%%%%%%%%%%%%%%%%%%%%%%%%%%%%%%%%
%%%%%%%%%%%%%%%%%%%%%%%%%%%%%%%%%%%%%%%%%%

\subsection*{Acknowledgements} The second author was partially supported by a 
Professional Travel Grant from Vanderbilt University. The authors would like to thank 
Gueo Grantcharov for suggestions and encouragements in the early stages of this work, 
Alexandra Otiman for helping them understand the cohomology of Oeljeklaus-Toma and 
Vaisman manifolds and for useful comments, and Victor Vuletescu for clarifications regarding 
locally K\"ahler manifolds.

%%%%%%%%%%%%%%%%%%%%%%%%%%%%%%%%%%%%%%%%%%
%%%%%%%%%%%%%%%%%%%%%%%%%%%%%%%%%%%%%%%%%%
%%%%%%%%%%%%%%%%%%%%%%%%%%%%%%%%%%%%%%%%%%

\providecommand{\bysame}{\leavevmode\hbox
to3em{\hrulefill}\thinspace}

\end{document}